\numberwithin{equation}{section}
\theoremstyle{plain}
\newtheorem{theorem}[equation]{Theorem}
\newtheorem{lemma}[equation]{Lemma}
\newtheorem{proposition}[equation]{Proposition}
\newtheorem{corollary}[equation]{Corollary}
\theoremstyle{definition}
\newtheorem{definition}[equation]{Definition}
\newtheorem{notation}[equation]{Notation}
\theoremstyle{remark}
\newtheorem{remark}[equation]{Remark}
\newtheorem{example}[equation]{Example}
\newtheorem{question}[equation]{Question}
\newcommand{\bB}{\mathbb{B}}
\newcommand{\bC}{\mathbb{C}}
\newcommand{\bk}{\mathbb{k}}
\newcommand{\bL}{\mathbb{L}}
\newcommand{\bN}{\mathbb{N}}
\newcommand{\bR}{\mathbb{R}}
\newcommand{\bZ}{\mathbb{Z}}
\newcommand{\cA}{\mathcal{A}}
\newcommand{\cB}{\mathcal{B}}
\newcommand{\cC}{\mathcal{C}}
\newcommand{\cD}{\mathcal{D}}
\newcommand{\cL}{\mathcal{L}}
\newcommand{\cT}{\mathcal{T}}
\newcommand{\cY}{\mathcal{Y}}
\newcommand{\cX}{\mathcal{X}}
\newcommand{\intform}[1]{{#1}}
\newcommand{\intformexpl}[1]{\widetilde{#1}}
\newcommand{\intregrep}[1]{\Lambda^{#1}}
\newcommand{\intregrepEN}[1]{\lambda^{#1}}
\newcommand{\lt}{\mathtt{lt}}
\newcommand{\omax}{\otimes_{\max}}
\newcommand{\quotient}[3]{{#1}\left[{#2}/{#3}\right]}
\newcommand{\regrep}[1]{\Lambda^{#1}}
\newcommand{\regrepEN}[1]{\lambda^{#1}}
\newcommand{\rmu}{{r^{-1}}}
\newcommand{\smu}{{s^{-1}}}
\newcommand{\sbgp}{\leqslant}
\newcommand{\tmu}{{t^{-1}}}
\providecommand{\cspn}{\overline{\mathop{\rm span}}}
\providecommand{\Ind}{{\mathop{\rm Ind}}}
\providecommand{\red}{{\mathop{\rm r}}}
\providecommand{\spn}{{\mathop{\rm span}}}
\providecommand{\supp}{{\mathop{\rm supp}}}
\date{\today}
\begin{document}

\title[Induction, absorption and weak containment]{Induction, absorption and weak containment of *-representations of Banach *-algebraic bundles}
\author{Dami\'{a}n Ferraro}
\email{dferraro@litoralnorte.udelar.edu.uy}
\address{Departamento de Matemática y Estadística del Litoral, CENUR
	Litoral Norte, Universidad de la República\\Gral. Rivera 1350\\
	Salto, Uruguay 50000}
\date{\today}
\subjclass[2010]{46L55 (Primary), 46L99 (Secondary)}
\keywords{Fell bundles, induction, weak containment,  absorption principles, amenability}

\begin{abstract}
Given a Fell bundle $\cB=\{B_t\}_{t\in G}$ over a LCH group and a closed subgroup $H\subset G,$ we show that all the *-representations of $\cB_H:=\{B_t\}_{t\in H}$ can be induced to *-representations of $\cB$ by means of Fell's induction process; which we describe as induction via a  *-homomorphism $q^{\cB}_H\colon C^*(\cB)\to \bB(X_{C^*(\cB_H)}).$ 
The quotients $C^*_H(\cB):=q^\cB_H(C^*(\cB))$ are intermediate to $C^*(\cB)= C^*_G(\cB)$ and $C^*_\red(\cB)=C^*_{\{e\}}(\cB)$ because every inclusion of subgroups $H\subset K\subset G$ gives a unique quotient map $q^\cB_{HK}\colon C^*_K(\cB)\to C^*_H(\cB)$ such that $q^\cB_{HK}\circ q^\cB_K=q^\cB_H.$
All along the article we try to find conditions on $\cB,$ $G,\ H$ and $K$ (e.g. saturation, nuclearity or weak containment) that imply $q^\cB_{HK}$ is faithful.
One of our main tools is a blend of Fell's absorption principle (for saturated bundles) and a result of Exel and Ng for reduced cross sectional C*-algebras.
We also show that given an imprimitivity system $\langle T,P\rangle$ for $\cB$ over $G/H,$ if $H$ is open or has open normalizer in $G,$ then $T$ is weakly contained in a *-representation induced from $\cB_H$ (even if $\cB$ is not saturated).
Given normal and closed subgroups of $G,$ $H\subset K,$ we construct a Fell bundle $\cC$ over $G/K$ such that $C^*_\red(\cC)=C^*_H(\cB).$
We show that $q^\cB_H$ is faithful if and only if both  $q^{\cC}_{\{e\}}$ and $q^{\cB_K}_H$ are.
\end{abstract}

\maketitle

\tableofcontents

\section{Introduction}

The \textit{rigged spaces} introduced by Rieffel in  \cite{Rf74} are nowadays commonly known as (pre)Hilbert modules and are the heart of the  equivalence theory developed in \cite{RIEFFEL197451,Rf82Morita}; which we refer to as \textit{Morita-Rieffel equivalence} (following the suggestion of \cite{exel2000morita}).

In Rieffel's theory, an induction process from the C*-algebra $B$ to the C*-algebra $A$ is determined by a non degenerate *-homomorphism $\theta\colon A\to \bB(X_B),$ where $X_B$ is a (right) $B-$Hilbert module and $\bB(X_B)$ the C*-algebra of adjointable maps from $X_B$ to $X_B.$
Every *-representation on a Hilbert space $\pi\colon B\to \bB(Y)$  induces a *-representation $\Ind^\theta(\pi)\colon A\to \bB(X_B\otimes_\pi Y),$ where $X_B\otimes_\pi Y$ is the $\pi-$balanced tensor product and $\Ind^\theta(\pi)_a(\xi\otimes_\pi \eta)\equiv (\theta_a\otimes_\pi 1)(\xi\otimes_\pi \eta)=\theta_a\xi\otimes_\pi \eta.$

Mackey and Blattner's induction processes \cite{Mk52,blattner1961induced} generate (unitary) representations of a separable group $G$ out of representations of a closed subgroup $H \subset G.$
Rieffel described these processes as induction via a *-homomorphisms $\intregrep{H}\colon C^*(G)\to \bB(X_{C^*(H)})$ and Fell extended these ideas to the realm of Banach *-algebraic bundles \cite{MR0457620,FlDr88}.
Both Rieffel and Fell noticed that separability is not essential.

As it is implicit in the preceding paragraphs, we prefer to use right (over left) Hilbert modules and all our inner products are linear in the second variable, even those of Hilbert spaces.
When dealing with functions we sometimes write $T_b$ instead of $T(b),$ specially if a compact notation is needed.
In this work all the (topological) groups are assumed to be locally compact and Hausdorff (LCH).
Whenever $G$ is such an object, the expression $H\sbgp G$ means that $H$ is a closed subgroup of $G.$

When trying to understand Fell's induction process it is convenient to think of a Banach *-algebraic bundle $\cB=\{B_t\}_{t\in G}$ as group $G$ with a set of coefficients (fibre) $B_t$ associated to every $t\in G.$
Each fibre is a Banach space and there is an involution $a\mapsto a^*$ and a product $(a,b)\mapsto ab$ for the coefficients, both compatible with the product and involution of $G$ ($B_sB_t\subset B_{st}$ and $B_s^*\subset B_{\smu}$).
A Banach *-algebraic bundle $\cB=\{B_t\}_{t\in G}$ is a Fell bundle (or C*-algebraic bundle) if, for all $b\in \cB,$ $\|b^*b\|=\|b\|^2$ and $b^*b$ is positive in the C*-algebra $B_e$ ($e$ being the unit of $G$).
This is the case of the trivial one dimensional complex bundle $\cT_G=\{\bC\}_{t\in G}$ over $G.$

The representation theory of Banach *-algebraic bundles is studied in depth in \cite{FlDr88}, which is our main source of definitions and constructions (e.g. those of cross sections, inductive limit topology, integrated forms, induction and weak containment).
Sometimes we do not follow the notation of \cite{FlDr88}, like when we denote $C_c(\cB)$ the set of compactly supported continuous cross sections of $\cB.$
We write $L^1(\cB)$ and $C^*(\cB)$ for the $L^1-$ and $C^*-$cross sectional algebras of $\cB$ (the latter being the enveloping C*-algebra of the former).
The integration of *-representations gives a one to one correspondence between (non degenerate) *-representations of $\cB$ and $L^1(\cB).$
We use the same symbol for a *-representation and its (dis)integrated form.
For example, given a representation $T$ of $\cB,$ the expression $T_x$ stands for $T$ computed at $x\in \cB$ or the integrated form of $T$ computed at  $x\in L^1(\cB).$
Whenever it is convenient to make an explicit distinction between $T$ and its integrated form we denote $\widetilde{T}$ the latter or add the symbol $\widetilde{\ }$ somewhere in the notation.
Recall from \cite[VIII 16.4]{FlDr88} that $L^1(\cB)$ is reduced provided that $\cB$ is a Fell bundle. 
In this situation $L^1(\cB)$ is a *-subalgebra of $C^*(\cB)$ and we may think of integrated forms as defined in $C^*(\cB).$

Given a Banach *-algebraic bundle $\cB=\{B_t\}_{t\in G}$ 
and $H  \sbgp G,$ the reduction $\cB_H:=\{B_t\}_{t\in H}$ is a Banach *-algebraic bundle with the structure inherited from $\cB.$
As explained in  \cite{FlDr88}, only the $\cB-$\textit{positive} *-representations of $\cB_H$ are suitable to perform Fell's abstract and concrete induction processes.
By \cite[XI 9.26]{FlDr88}, when applied to the same $\cB-$positive *-representation $T$ of $\cB_H,$ the two processes give unitary equivalent  *-representations (both denoted $\Ind_H^\cB(T)$).

In \cite[XI 11.10]{FlDr88} Fell shows that every *-representation of $\cB_H$ is $\cB-$positive provided that $\cB$ is a saturated Fell bundle.
He also asks if this holds when the saturation hypothesis is removed, posing the ``positivity problem''; which we solve on the affirmative. 
Our solution has several consequences.
Firstly, for every Banach *-algebraic bundle $\cB$ over $G,$ all $H\sbgp G$ and any *-representation $T\colon \cB_H\to \bB(X);$ it follows that $T$ is $\cB-$positive if and only if $T_{b^*b}\geq 0$ for all $b\in \cB.$ 
Secondly, the representation theory of a Banach *-algebraic bundle $\cB$ (the induction process included) is equivalent to that of its bundle $C^*-$completion, which is a Fell bundle.
We may then continue this introduction working with Fell bundles.

In \cite{ExNg} Exel and Ng construct the reduced C*-algebra $C^*_\red(\cB)$ of a Fell bundle $\cB=\{B_t\}_{t\in G}$ as the image of a *-homomorphism $\regrepEN{\cB}\colon C^*(\cB)\to \bB(L^2_e(\cB)),$ $L^2_e(\cB)$ being a $B_e-$right Hilbert module.
It is a very nice exercise to verify that given a *-representation $T\colon B_e\equiv \cB_{\{e\}}\to \bB(X),$ the integrated form of the abstractly induced representation $\Ind_{\{e\}}^\cB(T)\equiv \Ind_e^\cB(T)$ is  $\Ind^{\regrepEN{\cB}}(T)\equiv \Ind^{\regrepEN{\cB}}(\intformexpl{T}) .$
Thus Fell's induction process $T\rightsquigarrow \Ind_e^\cB(T)$ is induction via $\regrepEN{\cB}.$

Following the work of Exel and Ng one can take any subgroup $H \sbgp G$ and construct a right $C^*(\cB_H)-$Hilbert module $L^2_H(\cB)$ and a *-homomorphism 
\begin{equation}\label{equ:firt mention to regrep}
\regrep{H\cB}\colon C^*(\cB)\to \bB(L^2_H(\cB))
\end{equation}
such that, for every *-representation $T\colon \cB_H\to \bB(X),$ $\Ind^{\regrep{H\cB}}(T)=\Ind_H^\cB(T);$ which we write $\Ind^{\regrep{H\cB}}(\widetilde{T})=\widetilde{\Ind}_H^\cB(T)$ if we want to emphasize that we are considering integrated forms.
This describes Fell's induction process, in general, as induction via a *-homomorphism.
Moreover, the constructions can be performed in such a way that:  $L^2_e(\cB)\equiv L^2_{\{e\}}(\cB);$ $\regrepEN{\cB}\equiv \regrep{\{e\}\cB};$ $L^2_G(\cB)\equiv C^*(\cB);$ $\intregrep{\cB}:=\intregrep{G\cB}$ is the canonical inclusion $C^*(\cB)\to \bB(C^*(\cB))$ and, finally, Rieffel's $\regrep{H}$ may be identified with $\regrep{H\cT_G}.$

The image $C^*_H(\cB):=\regrep{H\cB}(C^*(\cB))$ is a C*-algebra whose (non degenerate) *-representations are in a one to one correspondence with the *-representations of $\cB$ which are weakly contained in the family of *-representations induced from $\cB_H.$
It is then natural to say $\cB$ has the $H-$weak containment property ($H-$WCP) if $\regrep{H\cB}$ is faithful,  for this means that every *-representation of $\cB$ is weakly contained in a *-representation induced from $\cB_H.$

Notice that $\cB$  has the $\{e\}-$WCP (WCP for short) if and only if $\cB$ is amenable in the sense of \cite{Exel1997Amenability,ExNg} (i.e. $\intregrepEN{\cB}$ is faithful).
In \cite{Exel1997Amenability} Exel presented an approximation property (AP) for Fell bundles over discrete groups and showed it implies amenability.
Later, in \cite[Proposition 25.10]{Exlibro}, he proved that $C^*_\red(\cB)$ is nuclear provided that $B_e$ is nuclear and $\cB$ has the AP, the converse is due to Buss and Echterhoff \cite{buss2020amenability}.
Exel's AP turned out to be equivalent to a suitable translation of Anantharaman-Delaroche's notion of amenable W*-(and C*-)dynamical system \cite{ADaction1979,ADactionII1982,ADsystemes1987,abadie2021amenability} (AD-amenability).
These results indicate that, at least for Fell bundles over discrete groups, the AP is the ``correct'' notion of amenability (not the WCP).
Exel and Ng  generalised Exel's AP to Fell bundles over LCH groups and showed this new AP implies the WCP \cite{ExNg}.
Mc Kee and Pourshahami \cite{mckee2020amenable} consider Exel-Ng's AP in the realm of C*-dynamical systems and they relate it to AD-amenability.
To conform with the latest developments on (AD-)amenable actions on C*-algebras, when dealing with Fell bundles, we prefer to use the phrase \textit{weak containment} over \textit{amenable} or \textit{amenability}.

As shown in \cite{ExNg}, if $G$ is amenable (i.e. $\cT_G$ has the WCP) then every Fell bundle over $G$ has the AP and, consequently, it has the WCP.
This motivates the following.

\begin{question}\label{q:G has the HWCP then B has the WCP}
Let $G$ be a group and let $H \sbgp G$ be such that $G$ has the $H$-WCP. (i.e. $\intregrep{H}$ is faithful).
Does every Fell bundle over $G$ have the $H-$WCP?
\end{question}

Lau and Paterson's characterisation of amenability \cite[Corollary 3.2]{lau1991inner} implies that a group $G$ is amenable if and only if it is inner amenable and $C^*_\red(G)$ is nuclear.
Every discrete group $G$ is inner amenable, so it is amenable if and only if $C^*_\red(G)$ is nuclear.
Based on this we ask:

\begin{question}\label{q:inner amenability and nuclearity}
Suppose $\cB$ is a Fell bundle over an inner amenable group $G$ with $C^*_\red(\cB)$ nuclear. 
Does $\cB$ have the WCP?
If true, take a subgroup $H \sbgp G$ such that $C^*_H(\cB)$ is nuclear. 
Does $\cB$ have the $H-$WCP?
\end{question}

Fell's Imprimitivity Theorem for saturated bundles \cite[XI 14.18]{FlDr88} is a generalization of Mackey's Imprimitivity Theorem.
The former characterizes the $*-$representations $T\colon \cB\to \bB(X)$ which are induced by *-representations of $\cB_H$ as those being part of a system of imprimitivity $\langle T,P\rangle$ for $\cB$ over $G/H.$
Recall that this means $P$ is a regular $X$-projection-valued Borel measure on $G/H$ such that $T_bP(W)=P(tW)T_b$ for all $b\in B_t,$ $t\in G$ and Borel subset $W$ of $G/H.$

Every $*-$representation $S\colon \cB_H\to \bB(X)$ can be used to produce a system of imprimitivity $\langle \Ind_H^\cB(S),P^S\rangle$ for $\cB$ over $H,$ which is called the induced system \cite[XI 14.3]{FlDr88}.
In the group case (i.e. trivial bundles) every unitary representation $U\colon H\to \bB(X)$ induces a system of imprimitivity $\langle \Ind_H^G(U),P^U\rangle$ for $G$ over $G/H.$
So, given a *-representation $T\colon \cB\to \bB(X),$ one can form the system of imprimitivity $\langle T\otimes \Ind_H^G(U),1\otimes P^U\rangle$ on the Hilbert space $X\otimes Y.$ 
If $\cB$ is saturated, $T\otimes \Ind_H^G(U)$ may be described as an induced representation:

\begin{proposition}[Fell's Absorption Principle, {\cite[XI 13.9]{FlDr88}}]\label{prop:Fell AbsPrin}
	Let $\cB$ be a saturated Fell bundle over $G,$ $H \sbgp G,$ $U\colon H\to \bB(X)$ a unitary representation and $T\colon \cB\to \bB(Y)$ a non degenerate *-representation.
	Then $T\otimes \Ind_{H}^G(U)$ is unitary equivalent to $\Ind_H^\cB((T|_{\cB_H})\otimes U).$
\end{proposition}

The situation for non saturated bundles is not clear, but there are some clues on the existing bibliography.
For the special case $H=\{e\}$  and $U\colon H\to \bC$ the trivial representation, $\Ind_H^G(U)$ is the left regular representation $\regrepEN{}\colon G\to \bB(L^2(G)).$
The tensor products $T\otimes \Ind_H^G(U)=T\otimes \regrepEN{}$ are considered in \cite{ExNg}, where the integrated form of $T\otimes \regrepEN{}$ is denoted $\mu_{\regrepEN{},T}$ and $\phi\equiv T|_{B_e}.$
Putting together \cite[Lemma 2.12]{ExNg} and the ideas explained right before its statement, one can deduce the existence of a C*-isomorphism $
\varphi \colon \intform{\mu}_{\regrepEN{},T}(C^*(\cB))\to (\regrep{\cB}\otimes_\phi 1)(C^*(\cB))$
such that $\varphi \circ \intform{\mu}_{\regrepEN{},T}=\regrep{\cB}\otimes_\phi 1.$
After realizing (or recalling) that $\regrep{\cB}\otimes_\phi 1=\Ind_H^\cB(\phi)\equiv \Ind_H^\cB(T|_{B_e}),$ one gets

\begin{theorem}[Exel-Ng's Absorption Principle]\label{thm:EXNG absorption principle}
	Let $\cB$ be Fell bundle and $T\colon \cB\to \bB(X)$ a non degenerate *-representation.
	Then $T\otimes \regrepEN{} $ is weakly equivalent to $\Ind_{\{e\}}^\cB(T|_{B_e}) .$
	In particular, the integrated form of $T\otimes \regrepEN{}$ factors through a faithful *-representation of $C^*_\red(\cB)$ if (and only if) $T|_{B_e}$ is faithful.
\end{theorem}

The two absorption principles stated above motivate the following.

\begin{question}\label{q:absorption principle}
  Let $\cB$ be a Fell bundle over $G,$ $H \sbgp G,$ $T\colon \cB\to \bB(X)$ a non degenerate *-representation and $U\colon H\to \bB(Y)$ a unitary representation.
  Can we describe, up to weak containment, $T\otimes \Ind_H^G(U)$ as an induced representation?
\end{question}

As explained before, $T\otimes \Ind_H^G(U)$ is part of a system of imprimitivity, so one may also ask:

\begin{question}\label{q:system of imprimitivity weakly contained}
  Let $\cB$ be a Fell bundle over $G,$ $H \sbgp G$ and $T\colon \cB\to \bB(X)$ a non degenerate *-representation which is part of a system of imprimitivity $\langle T,P\rangle$ for $\cB$ over $G/H.$
  Is $T$ weakly equivalent (or weakly contained) in a *-representation induced from $\cB_H$?
\end{question}

Some other questions and problems will be posed along the article.
The ones we have presented are enough, we hope, to justify the division into sections we present below.

We start Section~\ref{sec:rep on Hilbert modules} by developing a with a minimal working theory of *-representations of Banach *-algebraic bundles on Hilbert modules.
We are particularly interested in being able to (dis)integrate *-representations of Fell bundles on Hilbert modules.
As an application we construct, for every Fell bundle $\cB=\{B_t\}_{t\in G},$ two *-homomorphisms
\begin{align*}
\psi\colon C^*(\cB)&\to \bB(C^*_\red(G)\otimes_{\max} C^*_\red(\cB)) &
\varphi\colon C_\red^*(\cB)& \to \bB(C^*_\red(G)\otimes_{\min} C^*_\red(\cB))
\end{align*}
whose disintegrated forms (also denoted $\psi$ and $\varphi,$ respectively) may be characterised by saying that $\psi( \regrep{\cB}_b )= \regrepEN{}_t\otimes \regrepEN{\cB}_b $ and $\varphi(\regrepEN{\cB}_b) =\regrepEN{}_t\otimes \regrepEN{\cB}_b$ for all $b\in B_t$ and $t\in G.$
We prove that $\varphi$ is always faithful and that $\psi$ is so if $G$ is inner amenable.
Finally, we use these maps to answer the first part of Question~\ref{q:inner amenability and nuclearity} (see Corollary~\ref{cor:inner amenability and nuclearity}).

The positivity problem is solved at the beginning of Section~\ref{sec:Banac bundles, completions and positivity}, which we continue by presenting a purely abstract (re-)construction of Fell's abstract induction process.
Given a Banach *-algebraic bundle $\cB$ over $G$ and $H \sbgp G,$ we show how to recover the induction process $T\rightsquigarrow \Ind_H^\cB(T)$ out of that of the bundle C*-completion $\cC$ of $\cB.$
We also define the $H-$cross sectional C*-algebra of $\cB,$ $C^*_H(\cB),$ and construct a quotient $q\colon C^*(\cB)\to C^*_H(\cB)$ in such a way that given any non degenerate *-representation $\pi\colon C^*_H(\cB)\to \bB(X),$ the disintegrated form of $\pi\circ q$ is weakly contained in a *-representation induced from $\cB_H.$
Finally, we show that $C^*_H(\cB)$ is C*-isomorphic to $C^*_H(\cC).$

In Section~\ref{sec:abstract induction revisited} we construct, given a Fell bundle $\cB$ over $G$ and $H \sbgp G,$ the (full) right $C^*(\cB_H)-$Hilbert module $L^2_H(\cB)$ and the *-representation $\regrep{H\cB}$ of \eqref{equ:firt mention to regrep}.
Our exposition will reveal that, given any *-representation $T\colon \cB_H\to \bB(X),$  $\Ind_H^\cB(T)$ is exactly
\begin{align*}
\regrep{H\cB}\otimes_{\intform{T}}1 & \colon \cB\to \bB(L^2_H(\cB)\otimes_{\intform{T}} X)&  b & \mapsto \regrep{H\cB}_b\otimes_{\intform{ T}}1,
\end{align*}
where the $T$ in the subindex $\otimes_T$ refers to the integrated form of $T.$
After this, we won't find much trouble in proving that $C^*_H(\cB)$ is isomorphic $\intregrep{H\cB}(C^*(\cB)).$
We then turn to construct a kind of \textit{universal system of imprimitivity} $\langle \regrep{H\cB},\psi^{H\cB}\rangle$ for $\cB$ over $G/H$ which we use to describe Fell's induced systems of imprimitivity in an abstract way.
This will be important later, when we try to answer to Question~\ref{q:system of imprimitivity weakly contained}.

The final part of Section~\ref{sec:abstract induction revisited} is dedicated to (abstract) induction in stages; which we may state by saying that given a Fell bundle $\cB$ over $G$ and subgroups $H \sbgp K \sbgp G,$ $\regrep{H\cB}$ is unitary equivalent to $\Ind_K^\cB(\regrep{H\cB_K}):=\regrep{K\cB }\otimes_{\intregrep{H\cB_K}}1.$
As we shall see, this implies the existence of a unique quotient map $q^\cB_{KH}\colon C^*_K(\cB)\to C^*_H(\cB)$ such that $q^\cB_{KH}\circ \intregrep{K\cB}=\intregrep{H\cB}$ ($q^\cB_H:=q^\cB_{GH}$ equals $\intregrep{H\cB}$).

We will say $\cB$ has the $HK-$WCP if $q^\cB_{KH}$ is faithful.
The $H-$WCP is the $HG-$WCP and the WCP for short is the $\{e\}G-$WCP.
Induction in stages implies (right away) that $\cB$ has the $HK-$WCP whenever $\cB_K$ has the $H-$WCP. 
So we ask the converse.

\begin{question}\label{q:HK wcp of B implies H wcp of BK}
   Does $\cB_K$ have the $H-$WCP provided that $\cB$ has the $HK-$WCP?
\end{question}

Only after a considerable amount of work, and restricting ourselves to the case where both $H$ and $K$ are normal in $G,$ will we be able to give an affirmative answer (Corollary~\ref{cor:qBKH iso iff qBKH iso}).

The main result of Section~\ref{sec:absorption principle} is our best answer to Question~\ref{q:absorption principle} and it is, basically, an absorption principle that (weakly) generalises those of Fell, Exel and Ng.
For a *-representation $T\colon \cB\to \bB(X)$ of a Fell bundle over $G$ and a unitary representation $U\colon H\to \bB(Y)$ of $H \sbgp G,$ our principle implies that $T\otimes\Ind_H^G(U)$ is weakly equivalent to a family of *-representations induced from the reductions of  $\cB$ to the conjugated groups $\{tHt^{-1}\colon t\in G\}.$
In particular, 
\begin{equation*}
\|(T\otimes \Ind_H^G(U))_f\|\leq \sup_{t\in G}\|\regrep{tH\tmu\cB}_f\|
\end{equation*}
for all $f\in C^*(\cB).$

We prove that if $\cB$ is saturated or has unitary multiplier of order $t,$ then there exists a natural C*-isomorphism between $C^*_H(\cB)$ and $C^*_{tHt^{-1}}(\cB)$ that transforms $\regrep{H\cB}$ into  $\regrep{tHt^{-1}\cB},$ as it is the case if $H$ is normal.
This isomorphism is used in conjunction with our absorption principle to give an incomplete answer to Question~\ref{q:G has the HWCP then B has the WCP} at the end of the Section~\ref{sec:absorption principle} (see Corollary~\ref{cor:icomplete answer to question HWC of group passes to bundle}).

In Section~\ref{sec:weak containment with respect to subgroups} we give conditions  on the subgroups $H \sbgp K \sbgp G$ for (the integrated form) 
\begin{equation}\label{equ:lambda HB_BK}
\intregrep{H\cB}|_{\cB_K}\colon C^*(\cB_K)\to \bB(C^*_H(\cB))\subset \bB(L^2_H(\cB))
\end{equation}
to factor (via $q^{\cB_K}_H$)
through a faithful representation of $C^*_H(\cB_K).$
We use this fact to give an incomplete answer to question~\ref{q:HK wcp of B implies H wcp of BK}).

Interchanging $H$ and $K$ in \eqref{equ:lambda HB_BK} we get the (integrated) *-representation
\begin{equation*}
\intregrep{K\cB}|_{\cB_H}\colon C^*(\cB_H)\to \bB(C^*_K(\cB))\subset \bB(L^2_K(\cB));
\end{equation*}
which is faithful if $H$ is either open, normal or has open normaliser in $G.$
These conditions are combined with the saturation of $\cB$ in Corollary~\ref{cor:amenability of G implies that of B} to give two situations where one can show the $HK-$WCP of $G$ to implies that of $\cB$ (see Question~\ref{q:G has the HWCP then B has the WCP}).

In the last part of the sixth section we study the $E-$crossed products of \cite{KaLAQu2013} in the context of Fell bundles.
Given an ideal $E$ of the Forier-Stieljes algebra $B(G)$ of a group $G,$ we define an $E-$cross sectional C*-algebra $C^*_E(\cB)$ for every Fell bundle $\cB$ over $G.$
We are particularly interested in the case where $E$ is the ideal $E_H$ determined by the kernel of $\intregrep{H}\colon C^*(G)\to C^*_H(G).$
We show that if either $H$ is normal in $G$ or $\cB$ is saturated, then $C^*_{E_H}(\cB)=C^*_H(\cB).$

Section~\ref{sec:induction and Morita} is dedicated to study the interplay between Fell's induction process and the induction of *-representations via the equivalence bundles of \cite{AbFrrEquivalence,abadie2019morita}.
The conclusions we get are used to give a partial answer to Question~\ref{q:system of imprimitivity weakly contained}.

In Section~\ref{sec:completions} we use the induction process to give a criteria to determine which C*-completions of the unit fibre $B_e$ of a Banach *-algebraic bundle $\cB=\{B_t\}_{t\in G}$ are suitable to construct a C*-completion of $\cB.$
We then take a Fell bundle $\cB$ over $G$ and normal subgroups of $G,$ $H \sbgp N \sbgp G,$ and consider the $L^1-$cross sectional bundle over $G/N$ derived from $\cB,$ $\quotient{L^1}{\cB}{N},$ which is a Banach *-algebraic bundle with unit fibre $L^1(\cB_N).$
The C*-completion $C^*_H(\cB_N)$ of the fibre $L^1(\cB_N)$ can be used to construct a C*-completion $\quotient{\cC^*_H}{\cB}{N}$ of $\quotient{L^1}{\cB}{N}.$
We show that $\cB$ has the ($H-$)WCP if and only if $\quotient{\cC^*}{\cB}{N}$ has the WCP and $\cB_N$ has the ($H-$)WCP.
This resembles (and may be used to show) the well known fact that $G$ is amenable if and only if both $G/N$ and $N$ are amenable.

We realize that $C^*(\cB)=C^*(\quotient{\cC^*}{\cB}{N})$ and show that $\intregrep{N\cB}\colon C^*(\cB)\to C^*_N(\cB)$ is (up to C*-conjugations)  $\regrepEN{\quotient{\cC^*}{\cB}{N}}\colon C^*(\quotient{\cC^*}{\cB}{N})\to C^*_\red(\quotient{\cC^*}{\cB}{N}).$
Then we use our partial answer to Question~\ref{q:inner amenability and nuclearity} to show that $\cB$ has the $N-$WCP whenever $G/N$ is inner amenable and $C^*_N(\cB)$ is nuclear.

All along the article we have included some results we consider to be interesting and illustrative of how to use our main conclusions. 
For example, in Theorem~\ref{thm:contiditional expectation open subgroup} we show that for every open subgroup $H$ of $G$ and every Fell bundle $\cB$ over $G,$ $C^*_\red(\cB_H)$ is a C*-subalgebra of $C^*_\red(\cB).$
Additionally, we construct a conditional expectation $P_\red\colon C^*_\red(\cB)\to C^*_\red(\cB_H).$
Latter, in Proposition~\ref{prop:inner amenable subgroup and downward WCP}, we show that if $H$ is (in addition) inner amenable and $C^*_\red(\cB)$ is nuclear, then $\cB_H$ has the WCP and $C^*_\red(\cB)=C^*_H(\cB).$

\section{Representations on Hilbert modules}\label{sec:rep on Hilbert modules}

As mentioned in the Introduction, we treat Hilbert spaces as right $\bC-$Hilbert modules and all our inner products are linear in the second variable.
We take from \cite{FlDr88} all the representation theory of groups and Banach *-algebraic bundles on Hilbert spaces, including the definition of the regional topology and the notion of weak containment of *-representations.
Putting together several results and definitions from sections VII, VIII 21 and XI 8.17-8.21 of \cite{FlDr88} one gets the following.

\begin{theorem}\label{thm:foundamental facts on weak containment}
	Let $\cA$ be a Banach *-algebraic bundle over a LCH group $G$ and consider a non degenerate *-representation $S\colon \cA\to \bB(X)$ and a family $\cT$ of non degenerate *-representations of $\cA.$
	Then the following claims are equivalent:
	\begin{enumerate}
		\item $S$ is weakly contained in $\cT$ ($S\preceq \cT$).
		By definition this means that given $f_1,\ldots,f_n\in L^1(\cA),$ $\varepsilon>0$ and $\xi_1,\ldots,\xi_m\in X;$ there exists a *-representation $T\colon \cA\to \bB(Y)$ which is a direct sum of finitely many elements of $\cT$ (with repetitions allowed)  and vectors $\eta_1,\ldots,\eta_m\in Y$ such that 
		\begin{align*}
		|\langle \xi_j,\xi_k\rangle -\langle \eta_j,\eta_k\rangle|& <\varepsilon & |\langle \xi_j,\intform{S}_{f_i}\xi_k\rangle -\langle \eta_j,\intform{T}_{f_i}\eta_k\rangle| & <\varepsilon
		\end{align*}
		for all $i=1,\ldots,n$ and $j,k=1,\ldots,m.$
		\item For every compact set $C\subset \cA,$ $\varepsilon>0$ and $\xi_1,\ldots,\xi_n\in X$  there exists a *-representation $T\colon \cA\to \bB(Y)$ which is a direct sum of finitely many elements of $\cT$ (with repetitions allowed)  and vectors $\eta_1,\ldots,\eta_m\in Y$ such that
		\begin{align*}
		|\langle \xi_j,\xi_k\rangle -\langle \eta_j,\eta_k\rangle|& <\varepsilon & |\langle \xi_j,S_{b}\xi_k\rangle -\langle \eta_j,T_{b}\eta_k\rangle|& <\varepsilon
		\end{align*}
		for all $b\in C$ and $j,k=1,\ldots,m.$
		\item The integrated form $\widetilde{S}$ is weakly contained in $\widetilde{\cT}:=\{\widetilde{T}\colon T\in \cT\}.$
		\item $\|\intform{S}_f\|\leq \sup\{ \|\intform{T}_f\|\colon T\in \cT\}$ for all $f\in L^1(\cA).$
	\end{enumerate}
\end{theorem}

Recall that two families of *-representations of $\cB,$ $\mathcal S$ and $\mathcal T,$ are said to be weakly equivalent ($\mathcal S \sim \mathcal T$) if $\mathcal S \preceq \mathcal T \preceq \mathcal S.$
We use the symbol $\approx$ to denote unitary equivalence of *-representations and we replace it with $=$ wherever we want to identify two unitary equivalent representations.

We will content ourselves by having an integration and disintegration theory for *-representations of Fell bundles  on Hilbert modules, and we elude any attempt of giving a notion of weak containment for such objects.

The C*-algebra of bounded adjointable operators on a right $A-$Hilbert module $Y_A$ will be denoted $\bB(Y_A).$
In particular, $\bB(A_A)\equiv \bB(A)$ is the multiplier algebra of $A.$
The minimal tensor product of C*-algebras will be denoted $\otimes$  and the maximal one $\otimes_{\max}.$

\begin{definition}\label{defi:representations of banac algebraic bundles}
	A *-representation of a Banach *-algebraic bundle $\cB=\{B_t\}_{t\in G}$ on the right $A-$Hilbert module $Y_A$ is a function $T\colon \cB\to \bB(Y_A)$ which is linear when restricted to any fibre; is multiplicative ($T_{ab}=T_aT_b$); preserves the involution ($T_{a^*}={T_a}^*$) and, for all $\xi,\eta\in Y_A,$ the map $\cB\to A,$ $b\mapsto \langle T_b\xi,\eta \rangle,$ is continuous.
	We say $T$ is non degenerate if the essential space of $T,$ defined as $Y^T_A:=\cspn \{T_b\xi\colon b\in \cB,\ \xi\in Y_A\},$ equals $Y_A.$
	A vector $\xi\in Y_A$ is cyclic for $T$ if $Y_A=\cspn\{T_b\xi\colon b\in \cB\}.$
\end{definition}

\begin{remark}
	If $\cB$ has an approximate identity in the sense of \cite[VIII 2.10]{FlDr88} (like it is the case if $\cB$ is a Fell bundle), by Cohen-Hewitt's Theorem \cite[V 9]{FlDr88} we have $Y^T_A=\{T_b\xi\colon b\in B_e,\ \xi\in Y_A\}.$
	In this case we define the essential part of $T$ as the *-representation $T'\colon \cB\to \bB(Y^T_A)$ such that $T'_b\xi=T_b\xi$ for all $b\in \cB$ and $\xi\in Y_A.$
	Note $T'$ is non degenerate. 
\end{remark}

The tensor product representations of the example below are the Hilbert module version of those constructed in \cite[VIII 9.16]{FlDr88} for Hilbert spaces.

\begin{example}
  Let $\cB$ be a Banach *-algebaric bundle over $G,$ $T\colon \cB\to \bB(X_A)$ a *-representation and $U\colon G\to \bB(Y_B)$ a unitary representation.
  The minimal (Hilbert module) tensor product $X_A\otimes Y_B$ is a right $A\otimes B-$Hilbert module in a natural way.
  Thinking of $X_A$ and $Y_B$ as if it were Hilbert spaces, it is not hard to show (following \cite[VIII 9.16]{FlDr88}) that the map $T\otimes U\colon \cB\to \bB(X_A\otimes Y_B)$ sending $b\in B_t $ to $T_b\otimes U_t$ is a *-representation which is non degenerate if $T$ is so.
The expressions $U\otimes T$ and $T\otimes U$ will be used indistinctly.
\end{example}

When we say $T\colon \cB\to \bB(Y)$ is a *-representation and the space $Y$ is not clear from the context, we will be assuming  that $Y=Y_\bC$ is a Hilbert space.
Otherwise $Y$ may be a Hilbert module or a C*-algebra (considered as a Hilbert module).

Given any *-representation $T\colon \cB\to \bB(Y_A),$ the restriction $T|_{B_e}\colon B_e\to \bB(Y_A)$ is contractive. 
Then, for all $b\in \cB,$ $ \|T_b\| = \|  T_{b^*b}\|^{1/2} \leq \|b^*b\|^{1/2} \leq  \|b\|.$

Note also that given any $\xi\in Y_A$ we have
\begin{equation*}
\lim_{a\to b} \|T_a\xi-T_b\xi\|^2 = \lim_{a\to b} \|\langle T_{a^*a}\xi,\xi\rangle + \langle T_{b^*b}\xi,\xi\rangle -\langle T_{b^*a} \xi,\xi\rangle -\langle T_{a^*b}\xi,\xi\rangle\| =0.
\end{equation*}
Thus $\cB\to Y_A,\ b\mapsto T_b\xi,$ is continuous.

\begin{proposition}\label{prop:integration of *-representation}
	For every *-representation $T\colon \cB\to \bB(Y_A),$ of a Banach *-algebraic bundle $\cB=\{B_t\}_{t\in G},$ there exists a unique *-representation $\psi \colon L^1(\cB)\to \bB(Y_A)$ such that $ \psi_f \xi =\int_G T_{f(t)}\xi\, dt $ for all $f\in C_c(\cB)$ and $\eta\in Y_A.$ 
	Moreover, 
	\begin{enumerate}
		\item  The essential spaces of $T$ and $\psi$ agree.
		\item $\xi\in Y_A$ is cyclic for $T$ if and only if it is cyclic for $\psi.$
		\item\label{item:recover integrated form} If given $r\in G,$ $b\in B_r$ and $f\in C_c(\cB)$ we define $bf,fb\in C_c(\cB)$ by $(bf)(s)=bf(r^{-1}s)$ and $(fb)(s)=f(sr^{-1})b\Delta_G(r)^{-1},$ then  $T_b\psi_f=\psi_{bf}$ and $\psi_fT_b=\psi_{fb}.$
	\end{enumerate}
\end{proposition}
\begin{proof}
	The comments preceding the statement imply $T$ is a Banach representation in the sense of \cite[VIII 8.2]{FlDr88}, then it is integrable (in the sense of \cite[VIII 11.2]{FlDr88}) to a representation of $\psi^0$ of $C_c(\cB)$ by \cite[VIII 11.3]{FlDr88}.
	The equality $\psi^0_f\xi=\int_G T_{f(t)}\xi\, dt$ follows from the construction of integrated forms of Fr\'echet representations \cite[VIII 11.3]{FlDr88}. 
	The claims about non degeneracy and cyclic vectors follow from \cite[VIII 11.10 \& 11.11]{FlDr88}.
	
	A straightforward adaptation of the arguments of \cite[VIII 11.4 \& 11.6]{FlDr88} reveal that $\psi^0$ is a $\|\ \|_1-$contractive *-representation of $C_c(\cB),$ so it admits a unique continuous extension $\psi\colon L^1(\cB)\to \bB(Y_A)$ that turns out to be a *-representation.
	Finally, claim (\ref{item:recover integrated form}) is just \cite[VIII 12.6]{FlDr88}. 
\end{proof}

\begin{definition}\label{defi:integrated form}
	The integrated form of the *-representation $T\colon \cB\to \bB(Y_A)$ is the *-representation $\psi\colon L^1(\cB)\to \bB(Y_A)$ given by the Proposition above.
	We use the same symbol to denote a *-representation and its integrated form, so we may write $T_f$ for $f\in L^1(\cB).$	
	If $\cB$ is a Fell bundle, we also denote $\intform{T}$ the unique extension of $\psi$ to the enveloping C*-algebra $C^*(\cB)$ of $L^1(\cB).$
\end{definition}

\begin{remark}[On the notation used for integrated forms of tensor product representations]\label{rem:tensor product of *-representations}
	If $T\colon \cB\to \bB(Y_A)$ and $\pi\colon A\to \bB(Z_C)$ are *-representations and we form the $\pi-$balanced tensor product $Y_A\otimes_\pi Z_C,$ then there exists a unique *-representation $T\otimes_\pi 1 \colon \cB\to \bB(Y_A\otimes_\pi Z_C)$ such that $(T\otimes_\pi 1)_a (\xi\otimes_\pi \eta)=(T_a \xi)\otimes_\pi \eta.$
	Let $\psi$ and $\psi'$ be the integrated forms of $T$ and $T\otimes_\pi 1,$ respectively.
	For every $f\in C_c(\cB),$ $\xi\in Y_A$ and $\eta\in Z_C$ one can use the fact that $Y_A\to Y_A\otimes_\pi Z_C,$ $\zeta\mapsto \zeta\otimes_\pi \eta,$ is linear and bounded to deduce that 
	\begin{equation*}
	(\psi_f\xi)\otimes_\pi \eta = (\int_G T_{f(t)}\xi\, dt)\otimes_\pi \eta = \int_G T_{f(t)}\xi\otimes_\pi \eta\, dt
	=\psi'_f(\xi\otimes_\pi \eta).
	\end{equation*}
	Hence $\psi\otimes_\pi 1=\psi'$ or $T\otimes_\pi 1=T\otimes_\pi 1$ because we use the same expression to denote a *-representation and its (dis)integrated form. 
\end{remark}

To disintegrate *-representations of Fell bundles on Hilbert modules we adapt the ideas of \cite[VIII 13.2]{FlDr88}.
The key to do this is the following extension of \cite[VI 19.11]{FlDr88}.

\begin{proposition}\label{prop:extension form ideal}
	Let $B$ be a Banach *-algebra, $I$ a (not necessarily closed) *-ideal of $B,$ $Y_A$ a Hilbert module and $\pi\colon I\to \bB(Y_A)$ a morphism of *-algebras.
	Then $\pi$ is contractive with respect to the norm of $I$ inherited from $B.$
	If $\pi$ is non degenerate, that is to say $Y_A=\cspn \{\pi(b)\xi\colon b\in I,\ \xi\in Y_A\},$ then it admits a unique extension $\pi'$ to a *-representation of $B.$
	In case $A=\bC$ and $\pi$ is degenerate, it also admits an extension to a *-representation of $B.$ 
\end{proposition}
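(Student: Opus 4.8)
The plan is to establish the three assertions in turn: contractivity first, then the non-degenerate extension, and finally the degenerate case over $\bC$.

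\emph{Contractivity.} Fix $b\in I$ and put $c:=b^*b\in I$, a self-adjoint element. Working in the unitization $B^1=B\oplus\bC1$, suppose $\lambda\in\bC\setminus\{0\}$ is such that $c-\lambda1$ is invertible, and write its inverse as $d+\mu1$ with $d\in B$, $\mu\in\bC$. Comparing the scalar and $B$-components of $(c-\lambda1)(d+\mu1)=1$ forces $\mu=-\lambda^{-1}$ and $cd-\lambda d=\lambda^{-1}c$; since $I$ is an ideal and $c\in I$, the identity $\lambda d=cd-\lambda^{-1}c$ shows $d\in I$. Applying the $*$-homomorphism $\pi$ to the relation $cd-\lambda d=\lambda^{-1}c$ (all terms lying in $I$) and to its right-handed analogue, a direct check shows $\pi(d)-\lambda^{-1}\id$ is a two-sided inverse of $\pi(c)-\lambda\id$ in $\bB(Y_A)$. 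Hence $\operatorname{sp}(\pi(c))\setminus\{0\}\subseteq\operatorname{sp}_{B^1}(c)$, so $r(\pi(c))\leq r_{B^1}(c)\leq\|c\|_{B^1}=\|c\|_B$. Because $\pi(c)=\pi(b)^*\pi(b)$ is self-adjoint in the C*-algebra $\bB(Y_A)$, its norm equals its spectral radius, giving $\|\pi(b)\|^2=\|\pi(c)\|=r(\pi(c))\leq\|b^*b\|_B\leq\|b\|_B^2$. Thus $\pi$ is contractive, so it extends by continuity to a $*$-representation of the closed $*$-ideal $\overline{I}$, which remains non-degenerate.

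\emph{The non-degenerate extension.} Since $\pi\colon\overline{I}\to\bB(Y_A)$ is a contractive $*$-representation of a Banach $*$-algebra, it factors through the enveloping C*-algebra as $\pi=\widehat{\pi}\circ\kappa$, where $\kappa\colon\overline{I}\to C^*(\overline{I})$ is the canonical map and $\widehat{\pi}\colon C^*(\overline{I})\to\bB(Y_A)$ is a non-degenerate $*$-representation. The heart of the matter is to produce, for each $b\in B$, a multiplier $m_b$ of $C^*(\overline{I})$ implementing left and right multiplication by $b$. To bound these multiplications in the C*-norm I would bootstrap from the classical Hilbert-space statement \cite[VI 19.11]{FlDr88}: every non-degenerate $*$-representation $\sigma$ of $\overline{I}$ on a Hilbert space extends to a $*$-representation $\sigma'$ of $B$ with $\|\sigma'(b)\|\leq\|b\|$ and $\sigma'(b)\sigma(a)=\sigma(ba)$, whence $\|\sigma(ba)\|\leq\|b\|\,\|\sigma(a)\|$ for all $a\in\overline{I}$. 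Taking the supremum over all $\sigma$ gives $\|\kappa(ba)\|\leq\|b\|\,\|\kappa(a)\|$ and, via the involution, $\|\kappa(ab)\|\leq\|b\|\,\|\kappa(a)\|$, so left and right multiplication by $b$ descend to bounded maps $L_b,R_b$ on $C^*(\overline{I})$ forming a double centralizer, i.e. a multiplier $m_b$ with $\|m_b\|\leq\|b\|$. The assignment $b\mapsto m_b$ is a contractive $*$-homomorphism $\beta\colon B\to M(C^*(\overline{I}))$. Extending $\widehat{\pi}$ to the multiplier algebra as $\overline{\widehat{\pi}}\colon M(C^*(\overline{I}))\to\bB(Y_A)$, I would set $\pi':=\overline{\widehat{\pi}}\circ\beta$; one checks this is a contractive $*$-representation of $B$ restricting to $\pi$ on $I$. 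Uniqueness is immediate, since any extension must satisfy $\pi'(b)\pi(a)\xi=\pi(ba)\xi$ for $a\in I$, $\xi\in Y_A$, and this determines $\pi'(b)$ on the dense essential subspace.

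\emph{The degenerate case.} When $A=\bC$, $Y$ is a Hilbert space and I would split $Y=Y^\pi\oplus(Y^\pi)^\perp$, with $Y^\pi=\cspn\{\pi(a)\xi\colon a\in I,\ \xi\in Y\}$ the essential subspace. For $\eta\perp Y^\pi$ and $a\in I$ one has $\pi(a)\eta\in Y^\pi$, while $\langle\pi(a)\eta,\pi(c)\xi\rangle=\langle\eta,\pi(a^*c)\xi\rangle=0$; hence $\pi(a)\eta\in Y^\pi\cap(Y^\pi)^\perp=\{0\}$, so $\pi$ vanishes on $(Y^\pi)^\perp$ and is non-degenerate on $Y^\pi$. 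Applying the previous step to the essential part yields $\pi_0'\colon B\to\bB(Y^\pi)$, and $\pi'(b):=\pi_0'(b)\oplus0$ is the desired extension.

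\emph{Main obstacle.} I expect the genuine difficulty to be the boundedness of the extension in the non-degenerate case. A naive attempt to bound $\|\pi'(\sum_i\pi(a_i)\xi_i)\|^2=\|\sum_{i,j}\langle\xi_i,\pi(a_i^*b^*ba_j)\xi_j\rangle\|$ by $\|b\|^2\,\|\sum_i\pi(a_i)\xi_i\|^2$ would require a self-adjoint square root of $\|b\|^2-b^*b$ inside $B$, which is unavailable because self-adjoint elements of a general Banach $*$-algebra need not have real spectrum. Passing through the enveloping C*-algebra and its multiplier algebra, using \cite[VI 19.11]{FlDr88} to control the C*-norm of multiplication by $b$, is exactly what circumvents this. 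The restriction to $A=\bC$ in the degenerate case is likewise essential, since the orthogonal decomposition of $Y$ has no analogue for a general Hilbert module, whose submodules need not be complemented.
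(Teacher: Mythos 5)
Your proof is correct, but both halves run along genuinely different lines from the paper's. For contractivity, the paper never looks at spectra: for $a\in I,$ $\xi\in Y_A$ and a state $\varphi$ of $A$ it forms the positive functional $p(b)=\varphi(\langle \pi(aba^*)\xi,\xi\rangle)$ on $B,$ applies Fell's inequality $p(b^*cb)\leq\|c\|p(b^*b)$ \cite[VI 18.14]{FlDr88}, and then runs $b$ through a bounded approximate unit of the C*-algebra $\overline{\pi(I)}$ to obtain $\|aa^*\|\pi(aa^*)-\pi(aa^*)^2\geq 0,$ hence $\|\pi(a)\|\leq\|aa^*\|^{1/2}\leq\|a\|;$ your unitization/spectral-permanence argument reaches the same bound and is, if anything, more elementary and self-contained. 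For the non degenerate extension, the paper reduces the Hilbert-module problem to the Hilbert-space one in a single stroke: existence of $\pi'$ is equivalent to the estimate $\|\sum_j\pi(ab_j)\xi_j\|\leq\|a\|\,\|\sum_j\pi(b_j)\xi_j\|,$ which is proved by choosing a faithful non degenerate $\rho\colon A\to\bB(Z),$ extending the Hilbert-space representation $\pi\otimes_\rho 1$ of $I$ on $Y_A\otimes_\rho Z$ via \cite[VI 19.11]{FlDr88}, and translating the resulting operator bound into the inequality $\langle v,v\rangle_A\leq\|a\|^2\langle u,u\rangle_A$ of $A$-valued inner products. You instead pass to $\overline{I},$ factor through $C^*(\overline{I}),$ and realize $B$ inside $M(C^*(\overline{I}))$ by double centralizers whose norms are controlled by the same key lemma \cite[VI 19.11]{FlDr88}; this buys a stronger, more structural conclusion (a contractive \Star{}homomorphism $B\to M(C^*(\overline{I}))$ through which every non degenerate Hilbert-module representation of $I$ factors), at the cost of invoking the multiplier-extension theory for Hilbert modules, whereas the paper's tensor trick is shorter and is the same device it reuses throughout (e.g.\ in Theorem~\ref{thm:characterization of positive representations II}). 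One step you should make explicit: the factorization $\pi=\widehat{\pi}\circ\kappa$ through the enveloping C*-algebra holds because composing $\pi$ with a faithful Hilbert-space representation of $\bB(Y_A)$ turns it into a Hilbert-space representation of $\overline{I}$ dominated by the enveloping C*-norm --- which is exactly the paper's ``compose with a faithful representation'' move in disguise. The degenerate case over $\bC$ is handled the same way in both proofs; your verification that $\pi$ vanishes on $(Y^\pi)^\perp$ fills in a detail the paper leaves implicit.
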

\begin{proof}
	Given $a\in I,$ $\xi\in Y_A$ and a state $\varphi$ of $A$ define $p\colon B\to \bC$ by $p(b)=\varphi(\langle \xi,\pi(a^*ba)\xi\rangle).$
	Then $p$ is positive in the sense of \cite[VI 18]{FlDr88} and, by \cite[VI 18.14]{FlDr88}, it satisfies $p(b^*cb)\leq \|c\|p(b^*b)$ for all $c\in B$ and $b\in I.$
	Thus we obtain the following inequality for all $c,b\in I$
	\begin{equation}\label{equ:inequality for state and ideal}
	\varphi(\langle \pi(b)\pi(a)\xi,\pi(c)\pi(b)\pi(a)\xi\rangle )\leq \|c\|\varphi(\langle \pi(b)\pi(a)\xi,\pi(b)\pi(a)\xi\rangle ).
	\end{equation}
	
	The closure of $\pi(I)$ is a C*-subalgebra of $\bB(Y_A),$ so there exists a net $\{b_i\}_{j\in J}\subset I$ of self adjoint elements such that $\{\pi(b_i)\}_{i\in I}$ is bounded and $\lim_i \pi(c)\pi(b_i)=\lim_i \pi(b_i)\pi(c)=\pi(c)$ for all $c\in I.$
	Putting $c=aa^*$ and $b=b_i$ in \eqref{equ:inequality for state and ideal} and taking limit we obtain
	\begin{equation*}
	\|aa^*\|\pi(aa^*)-\pi(aa^*)^2\geq 0 \ \Rightarrow\ \|aa^*\|\|\pi(a)\|^2\geq \|\pi(a)\|^4 \ \Rightarrow\ \|a\|\geq \|\pi(a)\|
	\end{equation*}
	for all $a\in I.$
	Thus $\pi$ is a contraction.
	
	Assume $\pi$ is non degenerate.
	In such a case the uniqueness of $\pi'$ is immediate and, as in \cite[VI 19.11]{FlDr88}, its existence is equivalent to the fact that given $a\in B,$ $b_1,\ldots, b_n\in I$ and $\xi_i,\ldots,\xi_n\in Y_A$  it follows that
	\begin{equation}\label{equ:bound of extended *-representation}
	\| \sum_{j=1}^n\pi(ab_j)\xi_j \|\leq \|a\|\|\sum_{j=1}^n\pi(b_j)\xi_j\|.
	\end{equation}
	
	To prove the identity above take a faithful and non degenerate *-representation $\rho\colon A\to \bB(Z),$ form the tensor product $Y\otimes_\rho Z;$ and consider the *-representation $\pi\otimes_\rho 1\colon I\to \bB(Y\otimes_\rho Z)$ such that $(\pi\otimes_\rho 1)_b (\xi\otimes \xi)=\pi(b)\xi\otimes \xi.$
	Then $\pi\otimes_\rho 1$ is non degenerate and admits a unique extension $(\pi\otimes_\rho 1)'\colon B\to \bB(Y\otimes_\rho Z)$ by \cite[VI 19.11]{FlDr88}.
	
	If given $a\in B,$ $b_1,\ldots, b_n\in I,$ $\xi_i,\ldots,\xi_n\in Y_A$ and $\xi\in Z$ we set $u:=\sum_{j=1}^n\pi(b_j)\xi_j$  and $v:=\sum_{j=1}^n\pi(ab_j)\xi_j,$ then we have
	\begin{align*}
	\langle \xi, \rho(\langle v,v\rangle_A )\xi\rangle 
	& = \langle v\otimes \xi,v\otimes\xi\rangle
	= \| (\pi\otimes_\rho 1)'_a(u\otimes \xi) \|^2
	\leq \|a\|^2\| u\otimes \xi \|^2 \\
	& \leq \|a\|^2 \langle \xi,\rho(\langle u,u\rangle_A)\xi\rangle.    
	\end{align*}
	This implies $\langle v,v\rangle_A \leq \|a\|^2\langle u,u\rangle_A,$ which gives \eqref{equ:bound of extended *-representation} afer taking norms and square roots.
	
	In case $A=\bC$ and $\pi$ is non degenerate we may consider the essential space $Y_\pi:=\cspn \pi(I)Y$ of $\pi$ and consider the extension $\pi'\colon B\to \bB(Y_\pi)\subset \bB(Y)$ of $\pi\colon I\to \bB(Y_\pi)$ as a *-representation of $B$ on $\bB(Y).$
\end{proof}

\begin{proposition}\label{prop:disintegration of *-representations}
	If $\cB$ be a Fell bundle, then every non degenerate *-representation $\pi\colon L^1(\cB)\to \bB(Y_A)$ is the integrated form of a unique *-representation $T\colon \cB\to \bB(Y_A).$
\end{proposition}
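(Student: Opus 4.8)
My plan is to build $T$ from $\pi$ by first introducing, for each $b$ in the fibre $B_t$, a left action on sections, $(b\cdot f)(s):=b\,f(t^{-1}s)$ for $f\in C_c(\cB)$, and recording the two elementary identities that drive everything: that $b\cdot f\in C_c(\cB)$ with $(bc)\cdot f=b\cdot(c\cdot f)$, and that the change of variables $s=tr$ gives the convolution identity $\int_G g(t)\cdot f\,dt=g*f$. Guided by the relation $\widetilde T_g\widetilde T_f=\widetilde T_{g*f}$ that any solution must satisfy, I would try to define $T_b$ on the dense submodule $\pi(C_c(\cB))Y_A$ by $T_b\bigl(\pi(f)\xi\bigr):=\pi(b\cdot f)\xi$, and then verify that this prescription is well defined, extends to an adjointable operator, assembles into a *-representation, and integrates back to $\pi$.

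The crux is to realise $T_b$ as an \emph{adjointable} operator on $Y_A$, and here I would reduce to Fell's Hilbert-space disintegration exactly as in the proof of Proposition~\ref{prop:extension form ideal}. Choosing a faithful non degenerate *-representation $\rho\colon A\to\bB(Z)$, the representation $\pi\otimes_\rho 1\colon L^1(\cB)\to\bB(Y\otimes_\rho Z)$ is again non degenerate, so by \cite[VIII 13.2]{FlDr88} it is the integrated form of a unique non degenerate *-representation $S\colon\cB\to\bB(Y\otimes_\rho Z)$. A short computation with the left action shows $S_b\bigl(\pi(f)\xi\otimes_\rho\zeta\bigr)=\pi(b\cdot f)\xi\otimes_\rho\zeta$, and since such vectors are total, $S_b$ already encodes the candidate $T_b$. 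Using the identity $\langle x\otimes_\rho\zeta,y\otimes_\rho\eta\rangle=\langle\zeta,\rho(\langle x,y\rangle_A)\eta\rangle$ together with faithfulness of $\rho$, I would descend from $Y\otimes_\rho Z$ to $Y_A$: if $\sum_i\pi(f_i)\xi_i=0$, applying $S_b$ and faithfulness gives $\sum_i\pi(b\cdot f_i)\xi_i=0$, so $T_b$ is well defined; the module inequality $\langle T_bu,T_bu\rangle_A\leq\|S_b\|^2\langle u,u\rangle_A$ and the adjoint relation $\langle T_bu,v\rangle_A=\langle u,T_{b^*}v\rangle_A$ follow in the same manner from $\|S_b\|$ and from $S_{b^*}=S_b^*$. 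Thus $T_b$ extends to an adjointable operator with $T_b^*=T_{b^*}$ and $S_b=T_b\otimes_\rho 1$.

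With $S_b=T_b\otimes_\rho 1$ in hand, the algebraic axioms (fibrewise linearity, $T_{ab}=T_aT_b$, $T_{a^*}=T_a^*$) transfer from $S$ to $T$ because $D\mapsto D\otimes_\rho 1$ is an isometric injective *-homomorphism $\bB(Y_A)\to\bB(Y\otimes_\rho Z)$. For the continuity clause of Definition~\ref{defi:representations of banac algebraic bundles} I would argue directly: the map $\cB\to L^1(\cB)$, $b\mapsto b\cdot f$, is continuous (from joint continuity of multiplication in $\cB$, the supports staying inside a fixed compact set as $b$ ranges near a point), so $b\mapsto\pi(b\cdot f)\xi=T_b(\pi(f)\xi)$ is norm continuous; continuity of $b\mapsto\langle T_b\xi',\eta\rangle_A$ then passes from the dense submodule to all of $Y_A$ via density and the local uniform bound $\|T_b\|\leq\|b\|$. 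Finally the convolution identity yields, for $g\in C_c(\cB)$, $\widetilde T_g\bigl(\pi(f)\xi\bigr)=\int_G\pi(g(t)\cdot f)\xi\,dt=\pi(g*f)\xi=\pi(g)\pi(f)\xi$, so $\widetilde T=\pi$ on $C_c(\cB)$ and, by continuity, on $L^1(\cB)$; non degeneracy of $T$ then follows from that of $\pi$ through Proposition~\ref{prop:integration of *-representation}. Uniqueness is immediate, since any non degenerate solution must satisfy $T_b\widetilde T_f=\widetilde T_{b\cdot f}$, which pins $T_b$ down on the total set $\{\widetilde T_f\xi\}=\{\pi(f)\xi\}$.

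I expect the main obstacle to be precisely the descent step: checking that the operators $T_b$, defined on a dense submodule of $Y_A$, are genuinely \emph{adjointable} rather than merely bounded, this being what separates the Hilbert-module case from the Hilbert-space statement \cite[VIII 13.2]{FlDr88}. The faithful representation $\rho$ is the device that converts this into the already-understood Hilbert-space assertion, and the only remaining point requiring care is the continuity of $b\mapsto b\cdot f$, where the moving supports must be dominated by a single fixed compact set.
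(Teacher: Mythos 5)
Your proof is correct, and it reaches the result by a route that differs from the paper's. The paper's proof is essentially a one-liner: every $b\in\cB$ acts as a bounded multiplier of $L^1(\cB)$ (via the same left action $b\cdot f$ you introduce), and since $\pi$ is non degenerate it extends to the bounded multiplier algebra of $L^1(\cB)$ by Proposition~\ref{prop:extension form ideal}; one then defines $T_b$ as the image of the multiplier attached to $b$ and follows Fell's verification of the Hilbert-space case. In other words, the paper hides the device ``tensor with a faithful representation $\rho$ of $A$'' inside the proof of Proposition~\ref{prop:extension form ideal} and then runs Fell's multiplier scheme directly on the Hilbert module; you instead deploy that device at the outermost level, quoting Fell's Hilbert-space disintegration \cite[VIII 13.2]{FlDr88} for $\pi\otimes_\rho 1$ and then descending the resulting representation $S$ to $Y_A$. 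The price of your route is the descent work --- well-definedness of $T_b$ on $\pi(C_c(\cB))Y_A$, the module bound $\langle T_bu,T_bu\rangle_A\le\|S_b\|^2\langle u,u\rangle_A$, the adjoint relation, and the transfer of the algebraic axioms through the injective *-homomorphism $D\mapsto D\otimes_\rho 1$ --- all of which you carry out correctly by exploiting that a faithful *-homomorphism reflects positivity, so operator (in)equalities in $\bB(Z)$ pull back to $A$. The reward is that you never need the multiplier algebra of $L^1(\cB)$, and the genuinely Hilbert-module issue (adjointability of $T_b$, which is exactly what separates this statement from \cite[VIII 13.2]{FlDr88}) is made completely explicit rather than being absorbed into Proposition~\ref{prop:extension form ideal}. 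Your continuity argument and your uniqueness argument via $T_b\widetilde{T}_f=\widetilde{T}_{b\cdot f}$ coincide with what the paper's (i.e.\ Fell's) scheme yields.
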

\begin{proof}
	To prove the existence follow the ideas of \cite[VIII 13.2]{FlDr88} noticing that, by Proposition~\ref{prop:extension form ideal}, $\pi$ can be extended to the bounded multiplier algebra of $L^1(\cB).$
	Uniqueness follows from \cite[VIII 11.22]{FlDr88}.
\end{proof}

Let $\cB$ be a Banach *-algebraic bundle and denote $\iota\colon L^1(\cB)\to C^*(\cB)$ the canonical map form $L^1(\cB)$ to its enveloping C*-algebra.
Take any non degenerate *-representation $\pi\colon C^*(\cB)\to \bB(X).$
Then $\pi\circ \iota$ is the integrated form of a unique *-representation $T\colon \cB\to \bB(X)$ and $\pi(C^*(\cB))$ is the closure of $\intform{T}(C_c(\cB)).$
By Proposition~\ref{prop:integration of *-representation}, $T(\cB)$ is contained in the multiplier algebra of $\pi(C^*(\cB))$ and for each $b\in \cB$ the operator $T_b$ is the unique such that $T_b\pi(\iota(f))=\pi(\iota(bf))$ and $\pi(\iota(f))T_b=\pi(\iota(fb))$ for all $f\in C_c(\cB).$

If $\pi$ is chosen to be faithful and non degenerate and we identify $C^*(\cB)$ with the image of $\pi,$ using \cite[12.4-5]{FlDr88} one can view $T$ as a *-representation $\regrep{\cB}\colon \cB\to \bB(C^*(\cB))$ such that
\begin{align*}
\regrep{\cB}_b\iota(f)& =\iota(bf) & \iota(f)\regrep{\cB}_b& =\iota(fb) &  \intform{\Lambda}^\cB_f(\iota(g))&=\iota(f*g).
\end{align*}
for all $b\in \cB$ and $f,g\in C_c(\cB).$
In particular, $\intform{\Lambda}^\cB\colon L^1(\cB)\to \bB(C^*(\cB))$ is the composition of $\iota$ with the natural inclusion $C^*(\cB)\subset \bB(C^*(\cB)).$

\begin{definition}
	The universal representation of $\cB$ is $\regrep{\cB}\colon \cB\to \bB(C^*(\cB)).$
\end{definition}

Recall from Definition~\ref{defi:integrated form} that for every Fell bundle $\cB$ we consider the integrated form $\intform{\Lambda}^\cB$ to be defined in $C^*(\cB)$ and, consequently, it is precisely the natural inclusion $C^*(\cB)\subset \bB(C^*(\cB)).$

The discussion preceding the last definition above can now be used to get the following.

\begin{remark}
Consider a non degenerate *-representation $\pi\colon C^*(\cB)\to \bB(X)$ and its canonical extension $\overline{\pi}\colon \bB(C^*(\cB))\to \bB(X).$
Then $\overline{\pi}\circ \regrep{\cB}$ is a *-representation whose integrated form is $\pi.$
Moreover, the claim holds true if $\cB$ is a Fell bundle and $X$ is any Hilbert module.
\end{remark}

\begin{example}
  Consider a Fell bundle $\cB=\{B_t\}_{t\in G}$ and the $\intregrepEN{\cB}\colon C^*(\cB)\to \bB(L^2_e(\cB))$ of \cite[Definition 2.7]{ExNg}.
  We want to identify the disintegrated form $\overline{\intregrepEN{\cB}}\circ \regrep{\cB}$ of $\regrepEN{\cB}$ (which we denote $\regrepEN{\cB}$).
   Recall that $C_c(\cB)$ is a dense subspace of the $B_e-$Hilbert module $L^2_e(\cB)$ and
  \begin{align*}
  \langle f,g\rangle & =\int_G f(t)^*g(t)\, dt &   (fb)(s)&=f(s)b
  \end{align*}
  for all $f,g\in C_c(\cB),$ $b\in B_e$ and $s\in G.$
  By construction, $\intregrepEN{\cB}(f)g=f*g$ for all $f,g\in C_c(\cB).$
  
  The inclusion map from $C_c(\cB)$ to $L^2_e(\cB)$ is continuous in the inductive limit topology because $\|f\|_{L^2_e(\cB)}\leq \int_G\|f(t)\|^2\, dt$ for all $f\in C_c(\cB).$
  Besides, the proof of \cite[VIII 5.11]{FlDr88} reveals that $P:=\{f*g\colon f,g\in C_c(\cB)\}$ is dense in $C_c(\cB)$ with respect to the inductive limit topology, and so it is dense in $L^2_e(\cB).$
  Note this implies $\intregrepEN{\cB}$ is non degenerate.
  
  For all $b\in \cB$ and $f,g\in C_c(\cB),$
   \begin{equation*}
b(f*g)=(bf)*g=\intregrepEN{\cB}_{bf}g=\intregrepEN{\cB}_{\regrep{\cB}_b f}g = \overline{\regrepEN{\cB}}_b \regrepEN{\cB}_f g \equiv \regrepEN{\cB}_b\regrepEN{\cB}_f g.
   \end{equation*}
   Setting $h:=f*g$ we get $\| bh \|_{L^2_e(\cB)}\leq \|b\|\|h\|_{L^2_e(\cB)} $ and the inequality $\| bh \|_{L^2_e(\cB)}\leq\|b\|\|h\|_{L^2_e(\cB)}$ must hold for all $h\in C_c(\cB)$ because $P$ is dense in $L^2_e(\cB).$
   Then $\lambda^\cB_b\in \bB(L^2_e(\cB))$ is the unique operator mapping $f\in C_c(\cB)$ to $bf.$  
\end{example}

In Section 3 of \cite{Ab03} the construction of $\regrepEN{\cB}$ is performed the other way around (the non integrated form is constructed first).

The example above serves as a motivation for our definition of the Hilbert modules $L^2_H(\cB)$ associated to subgroups $H \sbgp G,$ and also for our construction of the *-representation $\regrep{H\cB}\colon \cB\to \bB(L^2_H(\cB))$ and its integrated form $\intregrep{H\cB}\colon C^*(\cB)\to \bB(L^2_H(\cB)).$

\subsection{Application: inner amenability and weak containment}\label{ssec:inner amenability and weak containment}

Here we address the first part of Question~\ref{q:inner amenability and nuclearity}, the answer being Corollary~\ref{cor:inner amenability and nuclearity}.

All we need about inner amenable groups can be found in \cite{lau1991inner,paterson2000amenability,YuanInnerInvariantMeans}.
The definition of inner amenable group we adopt is that of \cite[Chapter 2, Section 2.35 (H)]{paterson2000amenability}, so our inner amenable groups are the [IA] groups of \cite{YuanInnerInvariantMeans}.

The left and right regular representations of a LCH group $G$ in $\bB(L^2(G))$ will be denoted $\regrepEN{}$ and $\rho,$ respectively.
They are given by $\regrepEN{}_t(f)(s)=f(t^{-1}s)$ and $\rho_t(f)(s)=\Delta(t)^{1/2}f(st),$ where $\Delta$ is the modular funtion of $G.$
These representations are conjugated by the unitary $U\in \bB(L^2(G)),$ $U(f)(s)=\Delta(s)^{-1/2}f(s^{-1}).$
Since the ranges of $\lambda$ and $\rho$ commute, the function $\omega\colon G\to \bB(L^2(G)),$  $\omega_t = \regrepEN{}_t\rho_t,$ is a *-representation.

It is a well known fact that $G$ is amenable if and only if the trivial representation $\kappa_G$ is weakly contained in $\regrepEN{}.$
Similarly, $G$ is inner amenable $\Leftrightarrow$ $\kappa_G\preceq \omega.$
We will only use the direct implication of this last claim, which is a straightforward consequence of \cite[Theorem 1]{YuanInnerInvariantMeans}.

For Fell bundles over discrete groups the result below can be found in \cite{ara2013dynamical}.

\begin{theorem}\label{thm:full cross sectional algebra an inner amenable groups}
 Let $\cB=\{B_t\}_{t\in G}$ be a Fell bundle over an inner amenable group.
 Then the integrated form of
 \begin{align*}
  \psi& \colon \cB\to \bB(C^*_\red(G)\omax C^*_\red(\cB))& (b\in B_s)&\mapsto \psi_b:= \regrepEN{}_s\otimes_{\max}  \regrepEN{\cB}_b
 \end{align*}
 is faithful and non degenerate.
\begin{proof}
 Take a non degenerate *-representation $T\colon \cB\to \bB(X)$ with faithful integrated form.
 By Theorem~\ref{thm:EXNG absorption principle} there exists a faithful *-representation $\Omega^\cB$ making 
$$ \xymatrix{ C^*(\cB) \ar[dr]_{\intregrepEN{\cB}}\ar[rr]^{\regrepEN{}\otimes T}  & & \bB(L^2(G)\otimes X)\\ 
	&C^*_\red(\cB)\ar[ur]_{\Omega^\cB} & }$$
a commutative diagram.
Consequently, if $\overline{\Omega}^\cB$ is the extension of $\Omega^\cB$ to $\bB(C^*_\red(\cB))\subset \bB(L^2_e(\cB)),$ then $\overline{\Omega}^\cB\circ \regrepEN{\cB}=\regrepEN{}\otimes T$ (for integrated and non integrated forms).
 
 As pointed out earlier, the right regular representation of $G$ is unitary equivalent to the left one.
 Hence, there exists a non degenerate *-representation 
 \begin{equation*}
  \Omega^G\colon C^*_\red(G)\to \bB(L^2(G)\otimes X)
 \end{equation*}
 whose extension $\overline{\Omega}^G\colon \bB(C^*_\red(G))\to \bB(L^2(G)\otimes X)$ satisfies $\overline{\Omega}^G\circ \regrepEN{}=\rho\otimes 1.$
 
 A simple computation shows that the ranges of $\Omega^\cB$ and $\Omega^G$ commute, thus there exists a unique *-representation
 \begin{equation*}
  \Omega^\otimes :=\Omega^G\otimes_{\max}\Omega^\cB\colon C^*_\red(G)\otimes_{\max}C^*_\red(\cB)\to \bB(L^2(G)\otimes X)
 \end{equation*}
 sending $g\otimes_{\max}f$ to $\Omega^G_g\Omega^\cB_f $ for all $f\in C_c(\cB)$ and $g\in C_c(G).$
 Moreover, if $\overline{\Omega}^\otimes$ is the extension of $\Omega^\otimes$ to the multiplier algebra, then for all $r,s\in G$ and $b\in B_s:$
 \begin{equation*}
  \overline{\Omega}^\otimes( \regrepEN{}_r\omax\regrepEN{\cB}_b ) = \rho_r\regrepEN{}_s\otimes T_b.
 \end{equation*}
 This yields $ \overline{\Omega}^\otimes\circ \psi = \omega\otimes T$ for the non integrated forms of $\psi$ and $\omega\otimes T,$ which implies the same identity holds for the integrated forms.
 Then, for all $f\in C^*(\cB),$
 \begin{equation*}
\|(\omega\intform{\otimes}T)_f\|\leq \|\intform{\psi}_f\|\leq \|\intform{T}_f\|.
 \end{equation*}
 
 By \cite[VIII 21.24]{FlDr88} and the fact that $\kappa_G\preceq \omega$ we have $T\equiv \kappa_G\otimes T\preceq \omega\otimes T.$
 Then $\|\intform{T}_f\|\leq \|(\omega\intform{\otimes}T)_f\|$ and this implies $\|\intform{\psi}_f\|=\|\intform{T}_f\|=\|f\|_{C^*(\cB)}$ for all $f\in C^*(\cB).$
\end{proof}
\end{theorem}

Once again we follow Exel's work on Fell bundles over discrete groups, in this case \cite[Proposition 18.6]{Exlibro}, to prove the following.

\begin{theorem}\label{thm:homomorphism for reduced cross sectional algebra}
 Given a Fell bundle $\cB=\{B_t\}_{t\in G},$ the  *-representation 
 \begin{align*}
  \psi^\red & \colon \cB\to \bB(C^*_\red(G)\otimes C^*_\red(\cB)) & (b\in B_s)&\mapsto \psi^\red_b:=\regrepEN{}_s\otimes \regrepEN{\cB}_b
 \end{align*}
 is non degenerate and there is a unique *-representation $\varphi$ making 
 \begin{equation*}
\xymatrix{ C^*(\cB)\ar[rr]^{\intform{ \psi^r}} \ar[rd]_{\regrepEN{\cB}} &  &\bB(C^*_\red(G)\otimes C^*_\red(\cB))\\
& C^*_\red(\cB)\ar[ur]_{\varphi} & }
 \end{equation*}
 a commutative diagram.
 Moreover, $\varphi$ is faithful.
 \begin{proof}
  Let $q\colon C^*_\red(G)\otimes_{\max} C^*_\red(\cB)\to C^*_\red(G)\otimes C^*_\red(\cB)$ be the natural quotient map and denote  $\overline{q}$ its natural extension to the multiplier algebras.
  Using the $\psi$ of Theorem~\ref{thm:full cross sectional algebra an inner amenable groups} we get the *-representation  $\psi^\red:=\overline{q}\circ \psi\colon \cB \to \bB(C^*_\red(G)\otimes C^*_\red(\cB)).$
  
  For every $b\in B_s,$ $f\in C^*_\red(G)$ and $g\in C^*_\red(\cB)$ we have
  \begin{equation*}
\psi^\red_b(f\otimes g)=q(\regrepEN{}_s f\otimes_{\max}\regrepEN{\cB}_b g)= \intregrepEN{}_{sf} \otimes\intregrepEN{\cB}_{bg}=
(\regrepEN{}\otimes \regrepEN{\cB})_b (f\otimes g).
  \end{equation*}
  It is then clear that $\psi^\red_b = \regrepEN{}_s\otimes \regrepEN{\cB}_b.$
    
  To prove (the integrated form of) $\intform{\psi^\red}$ factors via a faithful *-representation $\varphi$ of $C^*_\red(\cB)$ we take a *-representation $S\colon \cB\to \bB(X)$ of $\cB$ with $S|_{B_e}$ faithful.
  Define $Y:=L^2(G)\otimes X$ and $T:=\lambda\otimes S.$
  By Theorem~\ref{thm:EXNG absorption principle} we may identify $C^*_\red(\cB)$ with $\intform{T}(C^*(\cB))$ and write $T_b\equiv \regrepEN{\cB}_b$ and $f\equiv \intform{T}_f$ for all $b\in \cB$ and $f\in C_c(\cB).$
  
   We view $C^*_\red(G)\otimes C^*_\red(\cB)$ and its multiplier algebra as concrete C*-subalgebras of $\bB(L^2(G)\otimes Y)$ by identifying $f\otimes g$ with  $ \regrepEN{}_f\otimes \intform{T}_g$ ($f\in C_c(G)$ and $g\in C_c(\cB)$).
   Thus $\regrepEN{}_s\otimes \regrepEN{\cB}_b \equiv \regrepEN{}_s\otimes T_b,$ for all $s\in G$ and $b\in B_s.$
   This means $\psi^r$ gets identified with $\lambda\otimes T,$ Theorem~\ref{thm:EXNG absorption principle} thus gives the existence of $\varphi$ and implies $\varphi$ is faithful.
 \end{proof}
\end{theorem}

Putting together the last two Theorems we get the following consequence which, for discrete $G,$ is a combination of \cite[Theorem 3.9]{ExNg} and \cite[Theorem 25.11]{Exlibro}.

\begin{corollary}\label{cor:inner amenability and nuclearity}
 Let $\cB=\{B_t\}_{t\in G}$ be a Fell bundle.
 If $G$ is inner amenable and the canonical quotient map $q\colon C^*_\red(G)\otimes_{\max} C^*_\red(\cB)\to C^*_\red(G)\otimes C^*_\red(\cB)$ is faithful, then $\cB$ has the weak containment property.
 \begin{proof}
 	Consider the maps $\psi$ and $\varphi$ given by Theorems~\ref{thm:full cross sectional algebra an inner amenable groups} and~\ref{thm:homomorphism for reduced cross sectional algebra}.
 	The diagram
  \begin{equation*}
   \xymatrix{  C^*(\cB)\ar[rrr]^{\intform{\psi}}\ar[d]_{\regrepEN{\cB}}& & & \bB(C^*_\red(G)\omax C^*_\red(\cB)) \ar[d]^{\overline{q}}\\
   C^*_\red(\cB)\ar[rrr]_{\varphi} & & &\bB(C^*_\red(G)\otimes C^*_\red(\cB))}
  \end{equation*}
  commutes because $\varphi\circ\regrepEN{\cB}$ and $\overline{q}\circ \intform{\psi} $ are the integrated forms of $\overline{\varphi}\circ \regrepEN{\cB} = \psi^\red$ and $\overline{q}\circ\psi=\psi^\red,$ respectively.
  The hypotheses guarantee that $\intform{\psi},\overline{q}$ and $\varphi$ are injective, then $\regrepEN{\cB}$ is a C*-isomorphism.
 \end{proof}
\end{corollary}

\section{C*-completions, positivity and induction}\label{sec:Banac bundles, completions and positivity}

Consider a Banach *-algebraic bundle $\cB=\{B_t\}_{t\in G}$ and a subgroup $H \sbgp G.$
As mentioned earlier, the reduction $\cB_H=\{B_t\}_{t\in H}$ is a Banach *-algebraic bundle.

The basic ingredients to perform Fell's abstract induction process (as described in \cite{FlDr88}) are the function $p\colon  C_c(\cB)\to C_c(\cB_H)$ and the action $C_c(\cB)\times C_c(\cB_H)\to C_c(\cB),$ $(f,u)\mapsto fu,$ given by 
\begin{align}
p(f)(t)&=\Delta_G(t)^{1/2}\Delta_H(t)^{-1/2}f(t) \label{equ: p}\\
fu(t)&= \int_H f(ts)u(s^{-1})\Delta_G(s)^{1/2}\Delta_H(s)^{-1/2}\, ds \label{equ: action to induce}
\end{align}
where $\Delta_G$ and $\Delta_H$ are the modular functions of $G$ and $H,$ respectively.

When considering more than one subgroup of $G$ we sometimes write $p^G_H$ or $p_H$ instead of $p.$

\begin{remark}\label{rem:about normal subgroups and restrictions}
	The function $p$ is precisely the restriction map $f\mapsto f|_H$ if and only if $\Delta_G|_H=\Delta_H.$
	This is the case if and only if the coset space $G/H=\{tH\colon t\in G\}$ has a non-zero $G-$invariant regular Borel measure \cite[III 13.16]{FlDr88} (like it is the case if $H$ is normal or open in $G$).
\end{remark}

It is shown in \cite[XI 8.4]{FlDr88} that 
\begin{align}\label{equ:ref properties of p}
p(f)^* &= p(f^*) & p(f)*u & = p(fu) & (f*g)u &= f*(gu) & f(u*v) & = (fu)v 
\end{align}
for all $f,g\in C_c(\cB)$ and $u,v\in C_c(\cB_H).$

Even though $p$ is not (always) a conditional expectation in the usual C*-algebraic sense, one may think it is and try to replicate the induction process via conditional expectations.
This is a generalization of the approach adopted in \cite[pp 179]{Rf74} and a particular case of the (abstract) induction process for *-algebras developed by Fell in \cite[XI 4]{FlDr88}; from where we take the notion of inducible *-representation.
The situation is particularly nice if $H$ is open in $G,$ see Theorem~\ref{thm:contiditional expectation open subgroup}.

Given a *-representation $T\colon \cB_H\to \bB(X),$ the restriction of $\intform{T}\colon L^1(\cB_H)\to \bB(X)$ to $C_c(\cB_H)$ is inducible via $p$ to a *-representation of $C_c(\cB)$ if and only if 
\begin{align}
\langle \xi,\intform{T}_{p(f^**f)}\xi \rangle & \geq 0 \label{equ:positivity}\\
\|g\|_1^2\langle \xi,\intform{T}_{p(f^**f)}\xi \rangle & \geq \langle \xi,\intform{T}_{p(f^*g^*g*f)}\xi \rangle \label{equ:induced is bounded}
\end{align}
for all $\xi\in X$ and $f,g\in C_c(\cB).$

Fell calls $T$ ``$\cB-$positive'' if it satisfies \eqref{equ:positivity} and in \cite[XI 9.26]{FlDr88} he shows that $\intform{T}$ is inducible via $p$ if and only if it is $\cB-$positive.
He does this by using $\cB-$positivity to construct the so called concretely induced (non integrated) representation $\Ind_{\cB_H\uparrow \cB}(T),$ which he uses to prove \eqref{equ:induced is bounded}.

If $T$ is $\cB-$positive, the (abstractly) induced Hilbert space is the closed linear span of tensors $f\otimes_T\xi$ ($f\in C_c(\cB)$ and $\xi\in X$).
The (abstractly) induced *-representation of $C_c(\cB)$ is denoted $\Ind^p_{C_c(\cB_H)\uparrow C_c(\cB)}(T)$  and the conditions
\begin{align*}
\langle f\otimes_T\xi,g\otimes_T\eta\rangle &= \langle \xi,\intform{T}_{p(f^**g)}\eta\rangle &
\Ind^p_{C_c(\cB_H)\uparrow C_c(\cB)}(T)_f(g\otimes_T\eta)&=(f*g)\otimes_T\eta,
\end{align*}
valid for all $f,g\in C_c(\cB)$ and $\xi,\eta\in X,$
determine $\Ind^p_{C_c(\cB_H)\uparrow C_c(\cB)}(T)$ up to a unitary equivalence.

Integrating $\Ind_{\cB_H\uparrow \cB}(T)$ one gets a *-representation of $L^1(\cB)$ that, when restricted to $C_c(\cB),$ is unitary equivalent to $\Ind^p_{C_c(\cB_H)\uparrow C_c(\cB)}(T)$ \cite[XI 9.26]{FlDr88}.
Thus one may think
\begin{equation}\label{equ:intro abstract and concrete induction agree}
\intform{\Ind}_{\cB_H\uparrow \cB}(T)|_{C_c(\cB)} = \Ind^p_{C_c(\cB_H)\uparrow C_c(\cB)}(T),
\end{equation}
and view the induction processes $\Ind_{\cB_H\uparrow \cB}$ and $\Ind^p_{C_c(\cB_H)\uparrow C_c(\cB)}$ as different faces of the same coin.

The result below solves the positivity problem posed in the last Remark of \cite[XI 11.10]{FlDr88}. 
We only prove the implication  (\ref{item:easy form of B positivity})$\Rightarrow$(\ref{item:double sum of inner product positive}) because all the rest of the Theorem is contained in \cite{FlDr88}; it is reproduced here to be used as a reference.

\begin{theorem}\label{thm:characterization of positive representations}
 Let $\cB=\{B_t\}_{t\in G}$ be a Banach *-algebraic bundle, $H \sbgp  G$ and also let $T\colon \cB_H\to \bB(Y)$ be a *-representation.
 Then the following are equivalent:
 \begin{enumerate}
  \item\label{item:T is B positive} $T$ is $\cB-$positive.
  \item\label{item:double sum of inner product positive} For every coset $\alpha\in G/H,$ every positive integer $n,$ all $b_1,\dots,b_n\in \cB_\alpha,$ and all $\xi_1,\ldots,\xi_n\in Y,$ $\sum_{i,j=1}^n \langle \xi_i,T_{b_i^*b_j}\xi_j\rangle \geq 0.$
  \item\label{item:easy form of B positivity} The restriction $T|_{B_e}$ is $\cB-$positive, that is to say $\langle \xi,T_{b^*b}\xi\rangle \geq 0$ for all $b\in \cB$ and $\xi\in Y.$
 \end{enumerate}
 Besides, the three conditions above hold if $\cB$ is a Fell bundle.
 \begin{proof}
  The equivalence between (1) and (2) is part of the content of \cite[XI 8.9]{FlDr88} and (2) clearly implies (3).
  Note also that if $\cB$ is a Fell bundle, then (3) does hold because $b^*b\geq 0$ in $B_e$ for all $b\in \cB$ and the restriction $T|_{B_e}$ is a *-representation of $B_e.$
  As indicated by Fell at the end of \cite[XI 11.10]{FlDr88}, to prove our statement it is enough to show that (2) does hold under the (only) assumption of $\cB$ being a Fell bundle.
  
  Assume $\cB$ is a Fell bundle, take a coset $\alpha\in G/H,$ $b_1,\ldots,b_n\in \cB_\alpha$ and $\xi_1,\ldots,\xi_n\in Y.$
 Let $t_1,\ldots,t_n\in \alpha$ be such that $b_j\in B_{t_j}$ (for every $j=1,\ldots,n,$) and set $\mathfrak{t}:=(t_1,\ldots,t_n).$
  Define the matrix space
 \begin{equation*}
  \mathbb{M}_\mathfrak{t}(\cB):=\{ (M_{i,j})_{i,j=1}^n\colon M_{i,j}\in B_{t_i^{-1}t_j},\ \forall\ i,j=1,\ldots,n \}
  \end{equation*}
  as in \cite[Lemma 2.8]{AbFrrEquivalence}.
  Then $\mathbb{M}_\mathfrak{t}(\cB)$ is a C*-algebra with usual matrix multiplication as product and $*-$transpose as involution.
  A quick way of showing this is by taking a *-representation $S\colon \cB\to \bB(Y)$ with $S|_{B_e}$ faithful and to identify $\mathbb{M}_\mathfrak{t}(\cB)$ with the concrete C*-algebra
  \begin{equation*}
   \{ (S_{M_{i,j}})_{i,j=1}^n\colon M_{i,j}\in B_{t_i^{-1}t_j},\ \forall\ i,j=1,\ldots,n\}\subset \bB(Y^n).
  \end{equation*}
  
  The matrix $M:=(b_i^*b_j)_{i,j=1}^n$ belongs to $\mathbb{M}_\mathfrak{t}(\cB)$ and, regarding $\cB$ as a $\cB-\cB-$equivalence bundle, it follows from the proof of \cite[Lemma 2.8]{AbFrrEquivalence} that $M$ is positive in $\mathbb{M}_\mathfrak{t}(\cB).$
  If $N\in \mathbb{M}_\mathfrak{t}(\cB)$ is the positive square root of $M,$ then all the entries of $N$ belong to $\cB_H$ and
  \begin{equation*}
   \sum_{i,j=1}^n\langle \xi_i,T_{b_i^*b_j}\xi_j\rangle 
     = \sum_{i,j,k=1}^n \langle \xi_i,T_{{N_{k,i}}^*N_{k,j}}\xi_j\rangle
     = \sum_{k=1}^n \langle \sum_{i=1}^n T_{N_{k,i}}\xi_i,\sum_{j=1}^nT_{N_{k,j}}\xi_j\rangle\geq 0;
  \end{equation*}
  proving that $T$ is $\cB-$positive. 
 \end{proof}
\end{theorem}

Having solved the positivity problem we now try to give a reconstruction of Fell's abstract induction process without using the concretely induced *-representations.

\subsection{Fell's abstract induction process (reforged)}\label{ssec:abstract induction}

Fix a Banach *-algebraic bundle $\cB=\{B_t\}_{t\in G},$ a subgroup $H \sbgp G$  and a $\cB-$positive *-representation $T\colon \cB\to \bB(X).$

By \cite[XI 4.5]{FlDr88} the function $C_c(\cB)\times C_c(\cB)\to \bB(X),$ $(f,g)\mapsto \intform{T}_{p(f^**g)},$ in an operator inner product in the sense of \cite[XI 1]{FlDr88} (recall our convention on inner products).
Essentially, this means that there exists a unique pre-inner product $\langle\ , \ \rangle_T$ on the algebraic tensor product $C_c(\cB)\odot X$ such that
\begin{equation*}
   \langle f\odot \xi,g\odot \eta\rangle_T=\langle \xi,\intform{T}_{p(f^**g)}\eta\rangle.
\end{equation*}

The Hilbert space induced by $T$ (via $p$) is, up to a unitary equivalence, a space $X^p_T$ for which there exists a linear map with dense range $C_c(\cB)\odot X\to X^p_T,\ f\odot \xi\mapsto f\otimes_T \xi,$ such that $\langle f\otimes_T \xi,g\otimes_T \eta\rangle =\langle \xi,\intform{T}_{p(f^**g)}\eta\rangle.$

We want to define a *-representation $S\colon \cB\to \bB(X^p_T)$ such that $S_b(f\otimes_T\xi)=(bf)\otimes_T\xi.$
To prove the existence of this $S$ we start by recalling the following crucial fact, which appears in \cite[XI 9.10]{FlDr88}.

\begin{lemma}
	Let $\cB=\{B_t\}_{t\in G}$ be a Banach *-algebraic bundle, $H \sbgp G$ and $T\colon \cB_H\to \bB(X)$ a $\cB-$positive *-representation.
	Then for every coset $\alpha \in G/H,$ every $c\in \cB,$ all $b_1,\ldots,b_n\in \cB_{\alpha}$ and $\xi_1,\ldots,\xi_n\in X,$
	\begin{equation*}
	\sum_{i,j=1}^n\langle \xi_i,T_{b_i^*c^*cb_j}\xi_j\rangle\leq \|c\|^2 \sum_{i,j=1}^n\langle \xi_i,T_{b_i^*b_j}\xi_j\rangle.
	\end{equation*}
\end{lemma}

In the proof of \cite[XI 8.9]{FlDr88} Fell uses the $\rho-$functions of \cite[III 13.2]{FlDr88} to show the equivalence of the claims (\ref{item:T is B positive}) and (\ref{item:double sum of inner product positive})  of Theorem~\ref{thm:characterization of positive representations}.
The same idea allow us to use the lemma above to prove the following result, whose proof is left to the reader.

\begin{theorem}\label{thm:B positivity implies inducibility}
	Let $\cB=\{B_t\}_{t\in G}$ be a Banach *-algebraic bundle, $H \sbgp G$ and $T\colon \cB_H\to \bB(X)$ a $\cB-$positive *-representation.
	Then for every $c\in \cB$ and $f\in C_c(\cB)$ 
	\begin{equation*}
	\intform{T}_{p((cf)^**(cf))}\leq \|c\|^2 \intform{T}_{p(f^**f)}
	\end{equation*} 
\end{theorem} 

Using the ideas of \cite[XI 4.5]{FlDr88} (of, basically, decomposing $T$ into a direct sum of $\cB-$positive cyclic *-representations) one can prove that for all $c\in \cB,$ $f_1,\ldots,f_n\in C_c(\cB)$ and $\xi_1,\ldots,\xi_n\in X,$
\begin{equation*}
 \sum_{i,j=1}^n \langle \xi_i,\intform{T}_{p((cf_i)^**(cf_j))}\xi_j\rangle \leq \|c\|^2 \sum_{i,j=1}^n \langle \xi_i,\intform{T}_{p(f_i^**f_j)}\xi_j\rangle.
\end{equation*}

The inequality above implies (for each $c\in \cB$) the existence of a unique operator $S_c\in \bB(X^p_T)$ mapping $f\otimes_T\xi$ to $cf\otimes_T\xi.$
It is straightforward to show that $\cB\to \bB(X^p_T),\ c\mapsto S_c,$ is linear on each fibre, multiplicative and preserves adjoints.
For example, by proving that $(cf)^* * g= f^* *(c^*g)$ for all $f,g\in C_c(\cB)$ and $c\in \cB$ one gets
\begin{equation*}
  \langle S_c (f\otimes_T\xi),g\otimes_T\eta\rangle
   = \langle \xi, \intform{T}_{p((cf)^**g)}\eta\rangle = \langle \xi, \intform{T}_{p(f^**(c^*g))}\eta\rangle
   = \langle f\otimes_T\xi,S_{c^*}g\otimes_T\eta\rangle,
\end{equation*}
and deduces ${S_c}^*=S_{c^*}.$

For every $f,g\in C_c(\cB)$ the function $\cB\to C_c(\cB_H),$ $c\mapsto p((cf)^**g),$ is $\|\ \|_1-$continuous.
Hence, the function $\cB\to \bB(X^p_T)$ mapping   $c$ to $\langle S_c f\otimes_T\xi,g\otimes_T\eta\rangle$ is continuous for every $\xi,\eta\in X.$
This implies $S$ is a *-representation of $\cB.$

\begin{definition}\label{defi:abstract induction}
   The *-representation of $\cB$ (abstractly) induced by $T$ via $p$ is
   \begin{equation*}
   \Ind_H^\cB(T)\colon \cB\to \bB(X^p_T),\ c\mapsto S_c.
   \end{equation*}
   The integrated form of $\Ind_H^\cB(T)$ will be denoted $\intform{\Ind}_H^\cB(T)$ or $\widetilde{\Ind}_H^\cB(T).$   
\end{definition}

We claim that the restriction of $\intform{\Ind}_H^\cB(T)$ to $C_c(\cB)$ is (unitary equivalent to) $\Ind_{C_c(\cB_H)\uparrow C_c(\cB)}(T).$
One can deduce this fact from \cite[XI 9.25-30]{FlDr88} but it takes some time to get used to the notation used there, so we prefer to give an outline of proof here.

Fix $f,g,h\in C_c(\cB)$ and note $F\colon G\to C_c(\cB),$ $F(t):=f^** (h(t)g),$ is continuous with respect to the inductive limit topology.
Moreover, $F$ is $\|\ \|_\infty-$continuous and there exists a compact set $K\subset G$ that contains the support of $F(t)$ for all $t\in G.$

Let $C_K(\cB)$ be the set of continuous sections of  $\cB$ with support contained in $K.$
Then $F\colon G\to C_K(\cB)$ is continuous and has compact support, so it is integrable.
For every $t\in G$ the function $C_K(\cB)\to B_t,$ $u\mapsto u(t),$ is continuous and linear, then
\begin{equation*}
  \left(\int_G F(s)\, ds \right)(t) = \int_G F(s)(t) \, ds
  =\int_G \underbrace{\int_G f^*(r)h(s)g(\smu\rmu t)\,dr}_{F(s)(t)} ds = (f^**(h*g))(t).
\end{equation*}
Thus $\int_G F(s)\, ds = f^**(h*g).$

For every $\xi,\eta\in X$ the function $C_K(\cB)\to \bC,\ u\mapsto \langle \xi,\intform{T}_{p(u)}\eta\rangle,$ is linear and $\|\ \|_\infty-$continuous.
Then 
\begin{multline*}
\langle f\otimes_T\xi, h*g\otimes_T\eta\rangle =
\langle \xi,\intform{T}_{p(f^**(h*g))}\eta\rangle
 = \int_G  \langle \xi,\intform{T}_{p(f^**(h(s)g))}\eta\rangle\, ds\\
 = \int_G  \langle f\otimes_T \xi,S_{h(t)} g\otimes_T\eta\rangle\, ds
 = \langle f\otimes_T \xi,\intform{S}_{h} g\otimes_T\eta\rangle.
\end{multline*} 
This shows $\intform{S}_h g\otimes_T\eta =h*g\otimes_T\eta$ and implies 
\begin{equation*}
  \intform{\Ind}_H^\cB(T)_h = \Ind_{C_c(\cB_H)\uparrow C_c(\cB)}^p(T)_h
\end{equation*}
for all $h\in C_c(\cB).$

\begin{remark}
After the identification above, \cite[XI 9.26]{FlDr88} says $\Ind_H^\cB(T)$ is the abstract version of Fell's concretely induced *-representation $\Ind_{\cB_H\uparrow \cB}(T)$ (so they are unitary equivalent).
\end{remark}

\begin{remark}[Fell]\label{rmk:induce a restriction}
	If $S\colon \cB\to \bB(X)$ is a non degenerate *-representation, then $T:=S|_{\cB_H}$ is $\cB-$positive and there is a unique unitary $U\colon X^p_T\to X$ mapping $f\otimes_T \xi$ to $\intform{S}_f\xi$ for all $f\in C_c(\cB)$ and $\xi\in X.$
	Moreover, $U$ intertwines $\Ind_H^\cB(T)$ and $S.$
	Consequently, $\Ind_H^\cB(S|_{\cB_H})=S$ (up to a unitary equivalence).
\end{remark}

It is a very good exercise of translation (between the abstract and concrete induction processes) to give an ``abstract'' version of the proof of the Lemma in \cite[XI 11.3]{FlDr88}; which states that 
\begin{equation}\label{equ:inducing increases norm}
\|T_b\|\leq   \|\Ind_H^\cB(T)_b\|
\end{equation}
for all $b\in B_e.$
We will use the inequality above in Proposition~\ref{prop:can change a banach algebraic bundle by its completion}. 

\subsubsection{The inducible C*-algebra}\label{sssec:inducible algebra}

Let $\cB=\{B_t\}_{t\in G}$ be a Banach *-algebraic bundle and $H \sbgp G.$
Consider a state $\varphi$ of $C^*(\cB_H)$ and its GNS-construction $(X_\varphi,\varPsi_\varphi,\xi_\varphi),$ $\xi_\varphi$ being the cyclic vector of the *-representation $\varPsi_\varphi\colon C^*(\cB_H)\to \bB(X_\varphi).$

There exists a unique *-representation $T^\varphi\colon \cB_H\to \bB(X_\varphi)$ whose integrated form is the composition of the natural map $\iota\colon L^1(\cB_H)\to C^*(\cB_H)$ with $\varPsi_\varphi.$
In fact $T^\varphi$ is cyclic with cyclic vector $\xi_\varphi.$
We say $\varphi$ is $\cB-$positive if $T^\varphi$ is $\cB-$positive and denote $S^+_{\cB,H}$ the set of $\cB-$positive states of $C^*(\cB_H).$

The universal $\cB-$positive *-representation of $C^*(\cB_H),$ $\varPsi^{\cB}_H,$ is the direct sum of all the $\varPsi_\varphi$ with $\varphi\in S^+_{\cB,H}.$
We set $C^*_{\Ind^p}(\cB_H):=\varPsi^\cB_H(C^*(\cB_H))$ and call it the concrete $\cB-$inducible cross sectional C*-algebra of $\cB_H.$
Any C*-algebra C*-isomorphic to $C^*_{\Ind^p}(\cB_H)$ will be called the (or a)  $\cB-$inducible cross sectional C*-algebra of $\cB_H$ and, when no confusion can arise, it will be denoted $C^*_{\Ind^p}(\cB_H).$
For example, if $\cB$ is a Fell bundle, then $\varPsi^\cB_H$ is the universal representation of $C^*(\cB_H)$ and we write $C^*(\cB_H)=C^*_{\Ind^p}(\cB_H)=\varPsi^\cB_H(C^*(\cB_H)).$

Clearly, the direct sum $T^{\cB,H}$ of all the $T^\varphi$ with $\varphi\in S^+_{\cB,H}$ is a $\cB-$positive *-representation of $\cB_H.$
The integrated form $\intform{T}^{\cB,H}$ and $\varPsi^\cB_H$ are related by the identity $\intform{T}^{\cB,H}=\varPsi^\cB_H\circ \iota.$

Let $T\colon \cB_H\to \bB(X)$ be any non degenerate $\cB-$positive *-representation of $\cB_H.$
By decomposing $T$ into a direct sum of cyclic *-representations we are in fact decomposing it into a direct sum of cyclic $\cB-$positive *-representations.
All the members of this direct sum are (unitary equivalent to some) of the form $T^\varphi,$  so $T$ is weakly contained in $T^{\cB,H}$ (i.e.  $\|\intform{T}_f\|\leq \|\intform{T}^{\cB,H}_f\|$ for all $f\in C_c(\cB)$).
Then there exists a unique *-representation $\rho^T\colon C^*_{\Ind^p}(\cB_H)\to \bB(X)$ such that $\rho^T\circ \varPsi^\cB_H\circ \iota =\intform{T}.$
Reciprocally, given a non degenerate *-representation $\rho\colon C^*_{\Ind^p}(\cB_H)\to \bB(Y)$ the composition $\rho\circ \varPsi^\cB_H\circ \iota$ is a non degenerate *-representation which is the integrated form of a unique *-representation $T^\rho\colon \cB_H\to \bB(Y).$
For all $f\in L^1(\cB_H)$ we have $\|\intform{T^\rho}_f\|=\| \rho\circ \varPsi^\cB_H\circ \iota (f)\|\leq \|\varPsi^\cB_H\circ \iota (f)\|=\|T^{ {\cB,H}}_f\|.$
Since $T^{\cB,H}$ is $\cB-$positive \cite[XI 8.21]{FlDr88} implies $T^\rho$ is so.

Uniqueness of (dis)integrated forms implies $T=T^{\rho^T}$ and $\rho=\rho^{T^\rho},$ thus $T\rightsquigarrow T^\rho$ establishes a one to one correspondence between non degenerate $\cB-$positive *-representations of $\cB_H$ and non degenerate *-representations of $ C^*_{\Ind^p}(\cB_H).$
Fell's induction process $T\rightsquigarrow\Ind_H^\cB(T)$ may be thought of as a way of transforming *-representations of $C^*_{\Ind^p}(\cB_H)$ into *-representations of $C^*(\cB).$
Part of what we want to do is to describe this process as induction via a $*-$homomorphism $\theta\colon C^*(\cB)\to \bB(Z_{C^*_{\Ind^p}(\cB_H)}).$

\begin{remark}\label{rem:describe Fell induction as Rieffel induction Banach star}
Given a *-homomorphism 	$\theta\colon C^*(\cB)\to \bB(Z_{C^*_{\Ind^p}(\cB_H)})$ and a non degenerate $\cB-$positive *-representation $T\colon \cB_H\to\bB(X),$ the *-representation 
\begin{equation*}
\Ind^\theta(\rho^T)\circ \iota_G \equiv (\theta\circ\iota_G)\otimes_{\rho_T}1 \colon L^1(\cB)\to \bB(Z_{C^*_{\Ind^p}(\cB_H)}\otimes_{\rho^T}X)
\end{equation*}
is the integrated form of a unique non degenerate *-representation $S^{\theta,T}\colon \cB\to \bB(Z_{C^*_{\Ind^p}(\cB_H)}\otimes_{\rho^T}X).$
By uniqueness of integrated forms, $S^{\theta,T}=\Ind_H^\cB(T)$ (up to unitary equivalence) if and only if $\intform{\Ind}_H^\cB(T)\circ \iota_G = (\theta\circ\iota_G) \otimes_{\rho_T}1.$
Our goal is to work out a $\theta$ such that this  identity holds for all $T.$
\end{remark}

\subsection{Induction and the bundle C*-completion}\label{ssec:inducion and bundle completion}
The bundle C*-completion $\cC=\{C_t\}_{t\in G}$ of a Banach *-algebraic bundle $\cB= \{B_t\}_{t\in G}$ is constructed in  \cite[VIII 16.7]{FlDr88}, the key fact being that $\cC$ is a Fell bundle whose representation theory is equivalent to that of $\cB.$
The result below is the improvement one gets from \cite[XI 12.16]{FlDr88} after solving the positivity problem on the affirmative.

\begin{proposition}\label{prop:can change a banach algebraic bundle by its completion}
 Let $\cB=\{B_t\}_{t\in G}$ be a Banach *-algebraic bundle, $H \sbgp  G,$ $\cC$  the bundle C*-completion of $\cB$ and $\rho\colon \cB\to \cC$ the canonical morphism of \cite[VIII 16.7]{FlDr88}.
 Then 
 \begin{enumerate}
 \item For every $\cB-$positive *-representation $T\colon \cB_H\to \bB(Y)$ there exists a unique *-representation $T^\rho\colon \cC_H\to \bB(Y)$ such that $T^\rho\circ (\rho|_{\cB_H})=T.$
 Reciprocally, for every *-representation $S\colon \cC_H\to \bB(Z)$ the composition $T:=S\circ (\rho|_{\cB_H})\colon \cB_H\to \bB(Z)$ is a $\cB-$positive *-representation and $T^\rho =S.$
 Also, up to a unitary equivalence,
 \begin{equation}\label{equ:induced representations and completions}
 \Ind_H^\cB(T) =  \Ind_H^\cC(T^\rho)\circ \rho.
 \end{equation}
 \item If $\iota\colon L^1(\cB_H)\to C^*(\cB_H)$ is the canonical *-homomorphism, then there exists a unique *-homomorphism $\tau\colon C^*_{\Ind^p}(\cB_H)\to C^*(\cC_H)$ such that such that $\tau\circ \varPsi^\cB_H\circ \iota (f)=\rho\circ f$ for all $f\in C_c(\cB_H).$
 Moreover, $\tau$ is a C*-isomorphism.
 \end{enumerate}
 \begin{proof}
 The first claim is, basically, a combination of the ideas presented in \cite[XI 12.6 \& VIII 16]{FlDr88}.
 Later, in Proposition~\ref{prop:induciton and completions}, we will prove a slightly more general fact, so we omit the proof (please note we are not falling into a circular argument).
 
  Let $S\colon \cC_H\to \bB(X)$ be a non degenerate *-representation with faithful integrated form.
  The composite $T:=S\circ (\rho|_{\cB_H})$ is $\cB-$positive and a simple computation reveals that $\intform{S}_{\rho\circ f} = \intform{T}_f$ for all $f\in C_c(\cB);$ which in turn implies $\|\rho\circ f\|_{C^*(\cC_H)}=\|\intform{S}_{\rho\circ f}\|=\|\intform{T}_f\|\leq \|\varPsi^\cB_H(\iota (f))\|.$
  So, we may define a contractive morphism of *-algebras
  \begin{equation*}
  	\tau_0\colon \varPsi^\cB_H\circ \iota (C_c(\cB_H))\to C^*(\cC_H)\qquad \varPsi^\cB_H\circ\iota (f)\mapsto \rho\circ f,
  \end{equation*}
  which we can continuously extend to a *-homomorphism $\tau\colon C^*_{\Ind^p}(\cB_H)\to C^*(\cC_H).$
  Notice the range of $\tau_0$ is dense in $C^*(\cC_H),$ so $\tau$ is surjective.
  
  By the construction of $S,$ $(T^{\cB,H})^\rho$ is weakly contained in $S.$
  Hence, for every $f\in C_c(\cB_H):$
  \begin{equation*}
    \|\varPsi^\cB_H\circ \iota (f)\| = \|\intform{T}^{\cB,H}_f\|\leq \|\intform{S}_{\rho\circ f}\|=\|\rho\circ f\|_{C^*(\cC_H)} \leq  \|\varPsi^\cB_H\circ \iota (f)\|.
  \end{equation*}
  Thus $\tau_0$ is an isometry and $\tau$ a C*-isomorphism.
 \end{proof}
\end{proposition}

The procedure of passing to the bundle C*-completion can be quite drastic.

\begin{example}\label{exa:only zero completion}
 Let $\cB=\bC\times \bZ_2=\{B_0,B_1\}$ be the trivial bundle with constant fiber $\bC$ over the additive group $\mathbb{Z}_2=\{0,1\}.$
 Define the involution by $(\lambda,r)^*:=(\overline{\lambda},r)$ and the product by
 \begin{equation*}
  (\lambda,r)(\mu,s)=\begin{cases}
                      (\lambda\mu,r+s) \mbox{ if }r\neq 1 \mbox{ or }s\neq 1,\\
                      (-\lambda \mu,0) \mbox{ if }r=s=1.
                     \end{cases}
 \end{equation*}
 We leave to the reader the verification of the fact that $\cB$ is a Banach *-algebraic bundle with these operations.
 If $T\colon \cB\to \bB(Y)$ is a *-representation, then either $T|_{B_0}\colon B_0\to \bB(Y)$ is faithful or either it is the zero representation. 
 The first case is excluded because $(-1,0)=(1,1)^*(1,1)$ is negative in $B_0,$ so we must have $T|_{B_0}=0$ and this forces $\|T_b\| = \|T_{b^*b}\|^{1/2}=0$ for all $b\in \cB.$
 Hence, the bundle C*-completion $\cC$ of $\cB$ is the trivial bundle $\{0\}\times \bZ_2$ but, quite interestingly, $B_0$ is a C*-algebra and the identity $\|b^*b\|=\|b\|^2$ holds for all $b\in \cB.$
 Note that $\cC_{\{e\}}$ \underline{is not} the bundle C*-completion of $\cB_{\{e\}},$ implying that the processes of taking reductions (to a subgroup) and C*-completions \underline{do not} commute.
\end{example}

Let $H,G,\cB$ and $\cC$ be as in Proposition~\ref{prop:can change a banach algebraic bundle by its completion}.
Since every $*-$representation of $\cB$ is $\cB-$positive, we may identify $C^*(\cB)=C^*_{\Ind^p}(\cB_G)$ with $C^*(\cC_G)\equiv C^*(\cC)$ and  $C^*_{\Ind^p}(\cB_H)$ with $C^*(\cC_H).$

Suppose we can construct a *-homomorphism $\theta\colon C^*(\cC)\to \bB(Y_{C^*(\cC_H)})$ that describes Fell's induction process for $\cC$ (see Remark~\ref{rem:describe Fell induction as Rieffel induction Banach star}).
By Proposition~\ref{prop:can change a banach algebraic bundle by its completion}, $\theta$ would also describe Fell's abstract induction process for $\cB.$
Thus, at least from a theoretical point of view, it suffices to consider Fell bundles.

\subsection{Induced cross sectional C*-algebras}\label{ssec:induced algebras}

Let $\cB=\{B_t\}_{t\in G}$ be a Banach *-algebraic bundle and $H \sbgp G.$
Take a non degenerate $\cB-$positive *-representation $T\colon \cB_H\to \bB(X)$ that weakly contains any other such *-representation of $\cB_H$ (like the $T^{\cB,H}$ of Section~\ref{sssec:inducible algebra}).

Using the abstract induction process one realises that, given any *-representation $S\colon \cB_H\to \bB(Y),$ $S$ and its non degenerate part induce the same *-representation.
So, to compare $\|\Ind_H^\cB(S)_f\|$ and $\|\Ind_H^\cB(T)_f\|,$ one may assume $S$ is non degenerate.

To say $S$ is weakly contained in $T$ means the sequence of finite direct sums of $T$ converges to $S$ in the regional topology \cite{FlDr88}.
Then \cite[XI 12.2 \& 12.4]{FlDr88} imply $\Ind_H^\cB(S)$ is weakly contained in $\Ind_H^\cB(T),$ giving the bound 
\begin{equation*}
   \| \intform{\Ind}_H^\cB(S)_f \|\leq \|\intform{\Ind}_H^\cB(T)_f\|
\end{equation*}
for every $f\in L^1(\cB).$
Thus $\|\intform{\Ind}_H^\cB(T)_f\|$ is the maximum of 
\begin{equation*}
F(\cB,H,f):= \{ \|\intform{\Ind}_H^\cB(S)_f\|\colon S\mbox{ is a }\cB-\mbox{positive *-representation of }\cB_H\}\subset [0,+\infty)
\end{equation*}
for every $f\in L^1(\cB).$

We define the C*-seminorm $\|\ \|_H^\cB$ of $L^1(\cB)$ by 
\begin{equation*}
\|f\|^\cB_H:=\sup F(\cB,H,f)= \|\intform{\Ind}_H^\cB(T)_f\|.
\end{equation*}

\begin{definition}\label{defi:of H cross sectional algebra}
 The induced $H-$cross sectional C*-algebra of $\cB,$ $C^*_H(\cB),$ is the completion of the quotient $L^1(\cB)/\{f\in L^1(\cB)\colon \|f\|^\cB_H=0\}$ with respect to quotient norm.
 The canonical morphism of *-algebras from $L^1(\cB)$ to $C^*_H(\cB)$ will be denoted $\iota_H$ or $\iota^\cB_H$ and the corresponding map from $C^*(\cB)$ to $C^*_H(\cB)$ will be denoted $q^\cB_H.$
 In case $G=H$ we write $\iota$ instead of $\iota_G.$
\end{definition}

\begin{remark}
	Given closed subgroups of $G,$ $H$ and $K,$ we have $\|\ \|^\cB_H\leq \|\ \|^\cB_K$ if and only any *-representation of $\cB$ induced from (a $\cB-$positive *-representation) of $\cB_H$ is weakly contained in a *-representation induced from $\cB_K.$
\end{remark}

\begin{remark}
The relation $\|\ \|^\cB_H\leq \|\ \|^\cB_K$ does not imply that $H\subset K,$ see Proposition \ref{prop:condition for CHB=CtHtmuB}.
\end{remark}

The maps $q^\cB_H$ from the definition above and the $q^\cB_{HK}$ we construct below will be used extensively in the rest of the article, specially when working with the notion of weak containment.

\begin{remark}[and definition]\label{rmk:composition of q maps}
We know from \cite[XI 12.15]{FlDr88} (reproduced here as Corollary~\ref{cor:induction in stages, the Fell way}) that *-representations can be induced in stages.
Then, given inclusions $H\sbgp K\sbgp G,$ we have $\|\ \|^\cB_H\leq \|\ \|^\cB_K;$ which is equivalent to the existence of a unique *-homomorphism $q^\cB_{HK}\colon C^*_K(\cB)\to C^*_H(\cB)$ such that $q^\cB_{HK}\circ q^\cB_{K}=q^\cB_H.$
Notice that $q^\cB_{HK}$ is a surjective and that the following are equivalent:
\begin{enumerate}[(1)]
 \item $q^\cB_{HK}$ is an isomorphism.
 \item $\|\ \|^\cB_H = \|\ \|^\cB_K.$
 \item Every *-representation of $\cB$ induced from $\cB_K$ is weakly contained in a *-representation induced from $\cB_H.$
\end{enumerate}
\end{remark}

\begin{remark}\label{rmk:C_G(B)=C(B)}
	If we use Remark~\ref{rmk:induce a restriction} with $H=G$ we get that any *-representation $T\colon \cB_H\to\bB(Y)$ is $\cB-$positive and $\intform{\Ind}_H^\cB(T)$ is unitary equivalent to $\intform{T}.$
	Thus $C^*_G(\cB)\equiv C^*(\cB)$ and $q^\cB_G\colon C^*(\cB)\to C^*_G(\cB)$ is the identity map.
\end{remark}

\begin{remark}\label{rem:BH seminorms are norms if B is a Fell bundle}
   Suppose $\cB$ is a Fell bundle.
   We know from \cite[VIII 16.4]{FlDr88}
   that $L^1(\cB)$ is reduced (i.e. has a faithful *-representation).
   A close examination of the proof reveals that if $\pi\colon B_e\to \bB(X)$ is the universal representation, then the integrated form $\Ind_{\{e\}}^\cB(\pi)\colon L^1(\cB)\to \bB(X^\pi_p)$ is faithful.
   Hence, $\|\ \|^\cB_{\{e\}}$ is a norm and this implies $\|\ \|^\cB_{H}$ is a norm for every $H \sbgp G.$
\end{remark}

More often that not we treat $C^*_H(\cB)$ as an isomorphism class of C*-algebras.
So the expression $C^*_H(\cB)=A$ means that a particular C*-algebra $A$ is C*-isomorphic to the particular construction of $C^*_H(\cB)$ given in the definition above.
 
\begin{remark}\label{rem:induced and faithful integrated form}
	Given a *-representation $T\colon \cB_H\to \bB(X),$ $\Ind_H^\cB(T)$ weakly contains any *-representation of the form $\Ind_H^\cB(S)$ if and only if $\|f\|^\cB_H=\|\Ind_H^\cB(T)_f\|$ for all $f\in C_c(\cB).$
	In this situation we may identify $\intform{T}_f$ with $q^\cB_H(f)$ for all $f\in L^1(\cB)$ and $C^*_H(\cB)$ becomes the (norm) closure of $\intform{\Ind}_H^\cB(T) (C_c(\cB)).$
	This is the case if $T$ weakly contains any other non degenerate $\cB-$positive *-representation   of $\cB_H.$
\end{remark}

\begin{remark}\label{rem:H=e then H algebra is the reduced one}
As pointed out in the introduction, if $\cB$ is a Fell bundle, then for every *-representation $\pi\colon B_e\to \bB(X)$ we have $\regrepEN{\cB} \otimes_\pi 1=\intform{\Ind}_e^\cB(\pi).$
Taking $\pi$ non degenerate and faithful we get $\|f\|^\cB_{\{e\}}=\|\intform{\Ind}_e^{\cB}(\pi)_f\|=\|\regrepEN{\cB}_f\otimes_\pi 1\|=\|\regrepEN{\cB}_f\|$ for all $f\in L^1(\cB).$
Then $C^*_\red(\cB)=C^*_{\{e\}}(\cB).$
\end{remark}

\begin{remark}\label{rem:H cross sectional algebras of bundle an completion}
 If $\cC$ is the bundle C*-completion of $\cB,$  by the proof of Proposition~\ref{prop:can change a banach algebraic bundle by its completion} the $T$ of Remark~\ref{rem:induced and faithful integrated form} can be taken of the form $S\circ (\rho|_{\cB_H});$ with $S\colon \cC_H\to \bB(X)$ having faithful and non degenerate integrated form.
 By \eqref{equ:induced representations and completions} $C^*_H(\cB)$  is the norm closure of 
 \begin{equation*}
 \intform{\Ind}_H^\cB(T) (C_c(\cB))
  = \intform{\Ind}_H^{\cC}(S)(\{\rho\circ f\colon f\in C_c(\cB)\}).
 \end{equation*}
 The *-algebra on the right hand side above is norm dense in $ \intform{\Ind}_H^{\cC}(S)(C_c( \cC)).$
 So, after a faithful *-representation, $C^*_H(\cB)=C^*_H(\cC).$
\end{remark}

The preceding remarks together with the last paragraph of Section~\ref{ssec:inducion and bundle completion} reduce the study of the induction process of a Banach *-algebraic bundle to that of its bundle C*-completion.
Thus we may continue the study of the induction process working with Fell bundles.

\section{A *-homomorphism that describes abstract induciton}\label{sec:abstract induction revisited}

Let $\cB=\{B_t\}_{t\in G}$ be a Fell bundle, $H \sbgp G$ and consider the function $p\colon C_c(\cB)\to C_c(\cB_H)$ and the action of $C_c(\cB_H)$ on $C_c(\cB)$ described in \eqref{equ: p} and \eqref{equ: action to induce}.
By Theorem~\ref{thm:characterization of positive representations},  the $C_c(\cB_H)-$valued inner product
\begin{equation*}
C_c(\cB)\times C_c(\cB)\to C_c(\cB_H),\ (f,g)\mapsto p(f^* * g)
\end{equation*}
is positive definite with respect to the C*-completion $C_c(\cB_H)\subset C^*(\cB_H).$
Besides, the condition $p(f^**f)=0$ implies $f^**f(e)=\int_G f(t)^*f(t)\, dt=0$ and this happens if and only if $f=0.$
Properties \eqref{equ:ref properties of p} put us in the situation of \cite[Lemma 2.16]{Raeburn1998morita}, so 
\begin{align*}
\|\ \|_H& \colon C_c(\cB)\to \bR & f&\mapsto \|f\|_{H}:=\|p(f^**f)\|_{C^*(\cB_H)}^{1/2}
\end{align*}
is a norm on $C_c(\cB)$ and the $\|\ \|_{H}-$completion of $C_c(\cB)$ is a right $C^*(\cB_H)-$Hilbert module which we denote 	 $L^2_H(\cB)$ and call the $H-$Hilbert module of $\cB.$

The next Proposition is motivated by the results of \cite{ExNg}.
The products $bf$ we use below are those of Proposition~\ref{prop:integration of *-representation}.

\begin{proposition}\label{prop:rep on inducing module}
	Let $\cB=\{B_t\}_{t\in G}$ be a Fell bundle and $H \sbgp G.$
	Then $L^2_H(\cB)$ is $C^*(\cB_H)$-full and there exists a unique *-representation $\regrep{H\cB}\colon \cB\to \bB(L^2_H(\cB))$ such that $\regrep{H\cB}_b (f)=bf$ for all $b\in \cB$ and $f\in C_c(\cB).$
	Moreover,
	\begin{enumerate}
		\item $\regrep{H\cB}$ is non degenerate.
		\item The integrated form  $\intregrep{H\cB}\colon C^*(\cB)\to \bB(L^2_H(\cB))$ is the unique *-representation such that  $\intregrep{H\cB}_f(g)=f*g$ for all $f,g \in C_c(\cB).$
		\item The inclusion $C_c(\cB)\to L^2_H(\cB)$ is continuous in the inductive limit topology.
	\end{enumerate}
	\begin{proof}
		The map $p\colon C_c(\cB)\to C_c(\cB_H)$ is continuous in the inductive limit topology and surjective by \cite[II 14.8]{FlDr88}.
		Besides, $\{f^**g\colon f,g\in C_c(\cB)\}$ spans a subspace of $C_c(\cB)$ which is dense in the inductive limit topology (because Fell bundles have strong approximate units).
		Putting all together we obtain that $\{p(f^**g)\colon f,g\in C_c(\cB)\}$ spans a subspace of $C_c(\cB_H)$ which is dense in the inductive limit topology, implying that the inner products of $L^2_H(\cB)$ span a dense subspace of $C^*(\cB_H).$
		
		We will need to use claim (3) in the rest of the proof, so we prove it first.
		Take a compact set $D\subset G$ and denote $C_D(\cB)$ the set of elements of $C_c(\cB)$ with support contained in $D.$
		If $\alpha_D$ is the measure of $(D^{-1}D)\cap H$ with respect to $d_Ht;$ $\beta_D$ is the maximum value of $\Delta_G(t)^{1/2}\Delta_H(t)^{-1/2}$ for $t\in (D^{-1}D)\cap H$ and $\gamma_D$ the measure of $D$ with respect to $d_Gr,$ then for all $f\in C_D(\cB)$ we have
		\begin{equation*}
		\| f \|_{L^2_H(\cB)}^2 
		\leq \| p(f^* * f) \|_1 
		\leq \int_H \int_G \Delta_G(t)^{1/2}\Delta_H(t)^{-1/2} \|f(r)\|\|f(rt)\|\, d_Grd_Ht \leq  \alpha_D\beta_D\gamma_D \|f\|_{\infty}^2.
		\end{equation*}
		Hence, the inclusion $C_D(\cB)\to L^2_H(\cB)$ is norm continuous and (3) follows from the universal property of the inductive limit topology.
		
		Take a non degenerate *-representation $T\colon \cB_H\to \bB(X)$ with faithful integrated form.
		By Theorem~\ref{thm:B positivity implies inducibility}, for all $f\in C_c(\cB)$ and $b\in \cB$ we have
		\begin{equation*}
         \intform{T}_{p((bf)^**(bf))}\leq \|b\|^2 \intform{T}_{p(f^**f)},
		\end{equation*}
		which in terms of inner product means $\langle bf,bf\rangle \leq \|b\|^2\langle f,f\rangle$ and implies $\|bf\|_H\leq \|b\|\|f\|_H.$
		Then, for every $b\in \cB,$ there is a unique bounded operator $\regrep{H\cB}_b\colon L^2_H(\cB)\to  L^2_H(\cB)$ such that $\regrep{H\cB}_bf=bf$ for all $f\in C_c(\cB).$
		
		The identity $\langle \regrep{H\cB}_b f,g\rangle = \langle f,\regrep{H\cB}_{b^*}g\rangle$ holds for all $b\in \cB$ and $f,g\in C_c(\cB).$
		Thus each $\regrep{H\cB}_b$ is adjointable with adjoint $\regrep{H\cB}_{b^*}.$
		This gives a map $\regrep{H\cB}\colon \cB\to \bB(L^2_H(\cB)),$ which turns out to be multiplicative in $\cB$ and linear on each fibre.
		
		Given $f,g\in C_c(\cB),$ the map $\cB\to C_c(\cB_H),$ $b\mapsto p(f^* * bg)=\langle f,\regrep{\cB}_bg\rangle ,$ is continuous in the inductive limit topology.
		Hence, it is continuous as a function with range $C^*(\cB_H)$ (and the norm topology).
		Consequently, $\regrep{H\cB}$ is a *-representation of $\cB$ in $L^2_H(\cB).$
		
		For $f\in C_c(\cB)\subset C^*(\cB)$ and $g\in C_c(\cB)\subset L^2_H(\cB),$ the function $F\colon G\to C_c(\cB)$ given by $F(t)=f(t)g=\regrep{H\cB}_{f(t)}g$ is continuous in the inductive limit topology and integrable\footnote{Given a positive measure $\mu$ on a set $X$ and a locally convex space $Y,$ a function $f\colon X\to Y$ is integrable if there exists an element $y\in Y$ such that for every continuous linear functional $\varphi$ of $Y,$ $\varphi\circ f$ is integrable and $\int_X \varphi\circ f\, d\mu=\varphi(y).$
		The element $y$ is called the integral of $f$ and it is denoted $\int_X f\,d \mu.$ } with respect to such topology.
		This is so because there exists a compact $D\subset G$ containing the support of all the sections $F(t).$
		Let $\int_G F(t)\, dt$ be the integral of $F.$
		As in \cite[VIII 5.2]{FlDr88} we have $\int_G F(t)\, dt=f*g.$
		
		The topology of $C_c(\cB)$ relative to the norm topology of $L^2_H(\cB)$ is weaker than the inductive limit topology, so $F$ is integrable as a function from $G$ to $L^2_H(\cB)$ and its integral is $\int_G F(t)\, dt.$
		In $L^2_H(\cB)$ we have $F(t)=\regrep{H\cB}_{f(t)}g,$ so
		\begin{equation*}
		f*g = \int_G F(t)\, dt = \int_G \regrep{H\cB}_{f(t)}g\, dt = \intregrep{H\cB}_f g.
		\end{equation*}
		
		The only thing that remains to be shown is the non degeneracy of $\regrep{H\cB}.$  
		But this is a straightforward consequence of the fact that, for any $f\in C_c(\cB)$ and any approximate identity $\{u_i\}_{i\in I}$ of $B_e,$ in the inductive limit topology we have $f=\lim_i \regrep{H\cB}_{e_i}f.$
	\end{proof}
\end{proposition}

Take a *-representation $T\colon \cB_H\to \bB(X).$
The balanced tensor product $L^2_H(\cB)\otimes_{\intform{T}}X,$ where $T$ stands for the integrated form, is generated by tensors $f\otimes_{\intform{T}}\xi$ ($f\in C_c(\cB)$ and $\xi\in X$) such that 
\begin{equation*}
  \langle f\otimes_{\intform{T}} \xi,g\otimes_{\intform{T}} \eta\rangle =\langle \xi,\intform{T}_{p(f^**g)}\eta\rangle.
\end{equation*}

Thus $L^2_H(\cB)\otimes_{\intform{T}}X$ is the induced space $X^p_T$ of Section~\ref{ssec:abstract induction}.
Moreover, the operators $\regrep{H\cB}_u \otimes_{\intform{T}}1$ and $\Ind_H^\cB(T)_u$ agree on each elementary tensor $f\otimes_T \xi$ (for all $u\in \cB$ and $u\in C_c(\cB)$).
Then
\begin{equation*}
  \regrep{H\cB} \otimes_{\intform{T}}1 =\Ind_H^\cB(T)
\end{equation*}
for the integrated and non integrated forms of $\regrep{H\cB}$ and $\Ind_H^\cB(T).$
It is then clear (see Remark \ref{rem:describe Fell induction as Rieffel induction Banach star}) that induction via the *-homomorphism 
\begin{equation*}
  \intregrep{H\cB}\colon C^*(\cB)\to\bB(L^2_H(\cB)) 
\end{equation*}
describes Fell's abstract induction process.

In \cite{ExNg} the authors define $C^*_\red(\cB)$ as $\intregrepEN{\cB}(C^*(\cB))\subset \bB(L^2_e(\cB)).$
Together with Remark~\ref{rem:H=e then H algebra is the reduced one} this implies $C^*_{\{e\}}(\cB)=\intregrep{\{e\}\cB}(C^*(\cB))\subset \bB(L^2_{\{e\}}(\cB));$ which is a particular case of the following.

\begin{corollary}\label{cor:norm of reg rep is induced norm}
  Let $\cB=\{B_t\}_{t\in G}$ be a Fell bundle and $H \sbgp G.$
  Then $\intregrep{H\cB}\colon C^*(\cB)\to \bB(L^2(\cB))$ is faithful when restricted to $L^1(\cB).$
  Besides, for all $f\in L^1(\cB),$  $\|f\|^\cB_H=\|\intregrep{H\cB}_f\|$  and $C^*_H(\cB)=\intregrep{H\cB}(C^*(\cB)).$
\end{corollary}
\begin{proof}
Let $T\colon \cB_H\to \bB(X)$ be a *-representation with faithful integrated form, so $T$ weakly contains any *-representation of $\cB_H.$
The map $\psi\colon \bB(L^2_H(\cB))\to \bB(L^2_H(\cB)\otimes_T X),\ A\mapsto A\otimes_T 1,$ is an isometry and, for all $f\in L^1(\cB),$ $ \| f \|^\cB_H = \|\intform{\Ind}_H^\cB(T)_f\| = \|\intregrep{H\cB}_f\otimes_T 1\|
   = \|\psi(\intregrep{H\cB}_f)\|
   =\|\intregrep{H\cB}_f\|.$
It is then clear that we may identify $C^*_H(\cB)$ with $\intregrep{H\cB}(C^*(\cB)).$
Rephrasing Remark~\ref{rem:BH seminorms are norms if B is a Fell bundle} in terms of $\intregrep{H\cB}$ we get that $\intregrep{H\cB}|_{L^1(\cB)}$ is faithful.
\end{proof}

\begin{remark}[and notation]\label{rmk:abstract induction hilbert modules}
   Any *-representation $T\colon \cB_H\to\bB(Y_A)$ can be induced via $\intregrep{H\cB}$ to a *-representation
   \begin{align*}
   	\Ind_H^\cB(T)& \colon \cB\to \bB(L^2_H(\cB)\otimes_{\intform{T}}Y_A) & \Ind_H^\cB(T)_b&:= \regrep{H\cB}_b\otimes_{\intform{T}} 1.
   \end{align*}
   By Remark~\ref{rem:tensor product of *-representations} the identity $\Ind_H^\cB(T)=\regrep{H\cB}\otimes_{\intform{T}}1$ also holds for integrated forms.  
\end{remark}

\begin{remark}\label{rem:induction and balanced tensor product}
	Let $T\colon \cB_H\to \bB(Y_A)$ and  $\pi\colon A\to  \bB(X)$ be *-representations ($X$ is a Hilbert space).
	Then we can form the $\pi-$balanced tensor product representation $T\otimes_\pi 1\colon \cB_H\to \bB(Y_A\otimes_\pi X)$ mapping $b\in \cB$ to $T_b\otimes_\pi 1.$
	Using the canonical isomorphism 
	\begin{align*}
(L^2_H(\cB)\otimes_{\intform{T}}Y_A)\otimes_\pi X & = L^2_H(\cB)\otimes_{\intform{T}\otimes_\pi 1}(Y_A\otimes X) & 
(f\otimes_T \xi)\otimes_\pi \eta = f\otimes_{T\otimes 1}(\xi\otimes \eta)
	\end{align*}
	we can deduce that (up to a unitary equivalence) the balanced tensor product commutes with induction:
	\begin{equation*}
    \Ind_H^\cB(T\otimes_\pi 1) 
      =\regrep{H\cB}\otimes_{\intform{T}\otimes_\pi 1} 1
      = (\regrep{H\cB}\otimes_{\intform{T}} 1)\otimes_\pi 1
      = \Ind_H^\cB(T)\otimes_\pi 1.
	\end{equation*}	
\end{remark}

\begin{remark}[Weak containment and induction]\label{rem:wc and induction}
  Suppose the *-representation $T\colon \cB_H\to \bB(X)$ is weakly contained in a family $\mathcal{S}$ of *-representations (on Hilbert spaces) of $\cB_H.$
This means $T$ is the limit, in the regional topology of \cite{FlDr88}, of a net of finite direct sums of elements of $\mathcal{S}.$
By \cite[XI 12.2 \& 12.4]{FlDr88}, $\Ind_H^\cB(T)$ is weakly contained in $\{\Ind_H^\cB(S)\}_{S\in \mathcal{S}}.$
\end{remark}

The Hilbert module version of the Remark above is as follows.

\begin{proposition}\label{prop:induction increases norm}
   Let $\cB$ be a Fell bundle over $G,$ $H \sbgp G,$ $T\colon \cB_H\to \bB(X_A)$ a *-representation and $\mathcal{S}$ a set of *-representations of $\cB_H$ on Hilbert modules.
   If $\|\intform{T}_f\|\leq \sup_{S\in \mathcal{S}}\|\intform{S}_f\|$ for all $f\in L^1(\cB_H),$ then $\|\intform{\Ind}_H^{\cB}(T)_f\|\leq \sup_{S\in \mathcal{S}}\|\intform{\Ind}_H^\cB(S)_f\|\leq \|\intregrep{H\cB}_f\|$ for all $f\in L^1(\cB_H),$
\end{proposition}
\begin{proof}
 For each $S\in \mathcal{S}$ we denote $Y^S$ and $A_S$ the Hilbert module and the C*-algebra associated to $S$ ($S\colon \cB_H\to \bB(Y^S)$).
 Take faithful and non degenerate *-representations $\pi\colon A\to \bB(X')$ and $\rho_S\colon A_S\to \bB(Z^S)$ (for all $S\in \mathcal{S}$).
 For all $f\in L^1(\cB_H)$
 \begin{equation*}
 	\| (\intform{T}\otimes_\pi 1)_f \|=\| \intform{T}_f \|\leq \sup_{S\in \mathcal{S}}\|\intform{S}_f\|
 	=\sup_{S\in \mathcal{S}}\|(\intform{S}\otimes_{\rho_S}1)_f\|;
 \end{equation*}
 meaning that $T\otimes_\pi 1$ is weakly contained in $\{S\otimes_{\rho_S}1\}_{S\in \mathcal{S}}.$
 
 By Remarks~\ref{rem:induction and balanced tensor product} and~\ref{rem:wc and induction},
 \begin{equation*}
   \|\intform{\Ind}_H^{\cB}(T)_f\| = \|(\intform{\Ind}_H^{\cB}(T)\otimes_\pi 1)_f\| \leq \sup_{S\in \mathcal{S}}\|(\intform{\Ind}_H^\cB(S)\otimes_{\rho_S}1)_f\| = \sup_{S\in \mathcal{S}}\|\intform{\Ind}_H^\cB(S)_f\|
 \end{equation*}
 for all $f\in L^1(\cB).$
 Besides, for all $S\in \mathcal{S}$ and $f\in L^2(\cB),$
 \begin{equation*}
    \| \intform{\Ind}_H^\cB(S)_f \| = \|\intform{\Ind}_H^\cB(S\otimes_{\rho_S}1)_f\|\leq \|f\|^\cB_H = \|\intregrep{H\cB}_f\|,
 \end{equation*}
 and the proof follows by taking supremum over $S\in \mathcal{S}.$
\end{proof}

\subsection{The universal system of imprimitivity}\label{ssec:universal system of imprimitivity}
Our next goal is to obtain the induced systems of imprimitivity of \cite[XI 14.3]{FlDr88} in an abstract framework.
Let $\cB=\{B_t\}_{t\in G}$ be a Fell bundle and $H \sbgp G.$
The space of cosets $G/H:=\{tH\colon t\in G\}$ admits a canonical action of $G$ on the left that induces an action $\sigma$ of $G$ on $C_0(G/H),$ $\sigma_t(f)(rH)=f(\tmu rH).$
Consider a system of imprimitivity for $\cB$ over $G/H,$ $\langle T,P\rangle,$ as in \cite[VIII 18.7]{FlDr88}.
Then $T\colon \cB\to \bB(X)$ is a non degenerate *-representation and $P$ a regular $X$-projection-valued Borel measure on $G/H$ such that $T_bP(W)=P(tW)T_b$ for all $b\in B_t,$ $t\in G$ and Borel subset $W$ of $G/H.$

The spectral integral of $P$ is the *-representation $\varphi_P\colon C_0(G/H)\to \bB(X)$ such that $\varphi_P(a):=\int_{G/H}a(z)\, dP(z).$
By Stone's Theorem \cite[VI 10.10]{FlDr88}, non degenerate *-representations of $C_0(G/H)$ on $X$ correspond to spectral integrals of regular $X$-projection-valued Borel measures on $G/H.$

As shown in \cite[VIII 18.8]{FlDr88}, given non degenerate *-representations $S\colon \cB\to \bB(Y)$ and $\varphi\colon C_0(G/H)\to \bB(Y),$ if $P$ is the $Y$-projection-valued Borel measure on $G/H$ determined by $\varphi,$ then $\langle T,P\rangle$ is a system of imprimitivity for $\cB$ over $G/H$ if and only if $T_b\varphi(a)=\varphi(\sigma_t(a))T_b$ for all $b\in B_t,$ $t\in G$ and $a\in C_0(G/H).$

In general, it is not possible to describe a *-representation of a commutative C*-algebra $\varphi\colon C_0(M)\to \bB(X_A)$ as a spectral integral because $\bB(X_A)$ may not have enough projections to do this; as it is the case of the identity map $\varphi\colon C_0([0,1])\to \bB(C_0([0,1])).$

\begin{definition}
   A system of imprimitivity for $\cB$ over $G/H$ is a pair $\langle T,\psi\rangle$ consisting of non degenerate *-representations $T\colon \cB\to \bB(X_A)$ and $\psi\colon C_0(G/H)\to \bB(X_A)$ such that $T_b\psi(a)=\psi(\sigma_t(a))T_b$ for all $b\in B_t,$ $t\in G$ and $a\in C_0(G/H).$
\end{definition}

\begin{proposition}\label{prop:construction of psiHB}
	Given a Fell bundle $\cB=\{B_t\}_{t\in G}$ and $H \sbgp G,$ there is a unique *-representation $\psi^{H\cB}\colon C_0(G/H)\to \bB(L^2_H(\cB))$ such that $\psi^{H\cB}(C_0(G/H))C_c(\cB)\subset C_c(\cB)$ and $\overline{\psi^{H\cB}}(f)g(t)=f(tH)g(t)$ for all $f\in C_b(G/H),$ $g\in C_c(\cB)$ and $t\in G.$ 
	Moreover, $\langle \regrep{H\cB},\psi^{H\cB}\rangle$ is a system of imprimitivity for $\cB$ over $G/H.$
	\begin{proof}
		Given $f\in C_b(G/H)$ and $g\in C_c(\cB)$ define $fg\in C_c(\cB)$ by $fg(t):=f(tH)g(t).$
		For all $f\in C_b(G/H),$ $g,h\in C_c(\cB)$ and $t\in H$ we have
		\begin{align*}
		p((fg)^**h)(t) & = \Delta_G(t)^{1/2}\Delta_H(t)^{-1/2} \int_G (fg)^*(r)h(r^{-1}t)\, dr\\
		& = \Delta_G(t)^{1/2}\Delta_H(t)^{-1/2} \int_G g^*(r)\overline{f(r^{-1}tH)}h(rt)\, dr 
		= p(g^** (f^*h))(t).
		\end{align*}
		Thus $p((fg)^**h) = p(g^** (f^*h)).$
		
		Set $f':=(\|f\|_\infty^2 - |f|^2)^{1/2}$ and take a *-representation $S\colon \cB_H\to \bB(Z).$
		For all $\xi\in Z$ and $g\in C_c(\cB)$ it follows that
		\begin{align*}
		\|f\|^2_\infty \langle \xi,\intform{S}_{p(g^* * g)}\xi\rangle - \langle \xi,\intform{S}_{p((fg)^* * (fg))}\xi\rangle = \langle \xi,\intform{S}_{p( [f'g]^**[f'g] )}\rangle\geq 0;
		\end{align*}
		which implies $ \langle fg,fg\rangle_{L^2_H(\cB)}\leq \|f\|^2_\infty \langle g,g\rangle_{L^2_H(\cB)}.$
		Consequently, for every $f\in C_b(G/H)$ there exists a unique bounded operator $ \overline{\psi}^{H\cB}(f)\colon L^2_H(\cB)\to L^2_H(\cB)$ such that for every $g\in C_c(\cB)$ the identity $\overline{\psi}^{H\cB}(f)(g)=fg$ obtains.
		
		We already showed that $p((fg)^* * h)=p(g^**(f^*g))$ for all $f\in C_b(G/H)$ and $g,h\in C_c(\cB).$
		Thus $\langle \overline{\psi}^{H\cB}(f) u,v\rangle_p  =\langle u,\overline{\psi}^{H\cB}(f^*)v\rangle_p$ for all $u,v\in C_c(\cB)$ and it follows that $\overline{\psi}^{H\cB}(f)$ is adjointable with adjoint $\psi^{H\cB}(f^*).$
		We leave to the reader the verification of the fact that $\overline{\psi}^{H\cB}\colon C_b(G/H)\to \bB(L^2_H(\cB))$ is a unital *-representation.
		
		Take $f\in C_c(\cB)$ and let $g\in C_c(G/H)$ be such that $0\leq g\leq 1$ and $g(tH)=1$ if $t\in \supp(f).$
		Then $\overline{\psi}^{H\cB}(g)f=f$ and this implies $C_c(\cB)\subset \overline{\psi}^{H\cB}(C_c(G/H))C_c(\cB).$
		The restriction $\psi^{H\cB}:=\overline{\psi}^{H\cB}|_{C_0(G/H)}$ is then non degenerate and  $\overline{\psi}^{H\cB}$ must be the unique *-homomorphism $\overline{\psi^{H\cB}}\colon C_b(G/H)\to \bB(L^2_H(\cB))$ extending $\psi^{H\cB}.$
		
		Take $t\in G,$ $b\in B_t,$ $f\in C_b(G/H)$ and $g\in C_c(\cB).$
		For all $r\in G$ it follows that
		\begin{equation*}
		(bfg)(r)=b (fg)(\tmu r) = b f(\tmu r H) g(\tmu r) = \sigma_t(f)(rH) (bg)(r) = (\sigma_t(f) (bg))(r) ;
		\end{equation*}
		implying that $\regrep{H \cB}_b \overline{\psi}^{H\cB}(f) (g) = \overline{\psi}^{H\cB}(\sigma_t(f)) \regrep{H\cB}_b(g).$
		Thus the proof follows by a density argument.
		\end{proof}
		\end{proposition}
	
Given a non degenerate *-representation $R\colon \cB_H\to \bB(X_A),$ the pair 
\begin{equation*}
\langle \regrep{H\cB}\otimes_R 1,\psi^{H\cB}\otimes_R 1\rangle \equiv \langle \Ind_H^\cB(R),\psi^{H\cB}\otimes_R 1\rangle
\end{equation*}
is a system of imprimitivity for $\cB$ over $G/H;$ which we call an induced system. 
	
There is still another way of constructing systems of imprimitivity.
		
\begin{example}\label{example:system of imprimitivity}
	Consider a non degenerate *-representation $S\colon \cB\to \bB(X)$ and a unitary representation $U\colon H\to \bB(Y).$
	Let $\langle \regrep{HG},\psi^{HG}\rangle$ be the system of imprimitivity given by the Proposition above for (the trivial bundle over) $G$ and define $\Psi\colon C_0(G/H)\to \bB(X\otimes (L^2_H(G)\otimes_U Y))$ by $\Psi(a)=1\otimes (\psi^{HG}(a)\otimes_U 1).$
	Then $\langle S\otimes \Ind_H^G(U),\Psi\rangle$ is a system of imprimitivity for $\cB$ over $G/H.$ 
\end{example}

In essence, the proposition below characterises the integrated forms of Fell's induced systems of imprimitivity, as constructed in \cite[XI 14.3 pp-1181]{FlDr88}.
		
\begin{proposition}
	Let $\cB=\{B_t\}_{t\in G}$ be a Fell bundle, $H \sbgp G$ and $S\colon \cB_H\to \bB(X)$ a *-representation.
	If $P$ is the projection-valued measure induced by $S,$ as described \cite[XI 14.3]{FlDr88}, then the spectral integral of $P$ is $\psi^S:=\psi^{H\cB}\otimes_S 1\colon C_0(G/H)\to \bB(L^2_H(\cB)\otimes_S X).$
			\begin{proof}
				It is implicit in the claim above that we are identifying the concrete and the abstract induction processes.
				The induced projection-valued measure on $G/H$ is constructed in \cite{FlDr88} using the concrete induction process, so we need to specify the unitary equivalence between the concretely and abstractly induced spaces.
				
				We start (exactly) as in \cite[XI 9]{FlDr88}, so we fix a continuous and everywhere positive $H-$rho function $\rho$ on $G$ (see \cite[III 14.5]{FlDr88}) and denote $\rho^{\#}$ the regular Borel measure on $G/H$ constructed from $\rho$ (as in \cite[III 13.10]{FlDr88}).
				Notice we need to adapt Fell's formulas because our inner products are linear in the second variable.
				
				Let $\cY=\{Y_\alpha\}_{\alpha\in G/H}$ be the Hilbert bundle over $G/H$ induced by $S$ and let  $L^2(\rho^{\#},\cY)$ be the corresponding cross sectional Hilbert space (see \cite[XI 9.7]{FlDr88} and \cite[II 15.12]{FlDr88}).
				Notice that the space $\mathcal{X}(V)$ of \cite[XI 9.8]{FlDr88} is precisely $L^2_H(\cB)\otimes_T X$ and that the unitary operator $E\colon L^2_H(\cB)\otimes_S Y\to L^2(\rho^{\#},\cY)$ of \cite[XI 9.8]{FlDr88} intertwines the abstractly and concretely induced representations.
				The identity $E( \psi^T(f) (g\otimes_{\intform{T}} \xi) ) = fE(g\otimes_{\intform{T}}\xi)$ follows at once for all $f\in C_b(G/H)$ and every elementary tensor $g\otimes_{\intform{T}}\xi.$
				So $E$ intertwines the spectral integral of $P$ and $\psi^T.$
				\end{proof}
				\end{proposition}

It is now natural to call $\langle \regrep{H\cB},\psi^{H\cB}\rangle $ the \textit{universal} system of imprimitivity for $\cB$ over $G/H.$
Our study of systems of imprimitivity will be continued in Section~\ref{ssec:a weak imprimitivity theorem}.
We suggest to pause here and read the statement of Corollary~\ref{cor:reps that are part of s system of imprimitivity} to get a feeling of the kind of results we are after.

In the proof of \cite[Proposition 19.3]{Exlibro} Exel constructs a conditional expectation $E\colon C^*_\red(\cB)\to B_e$ for every Fell bundle $\cB=\{B_t\}_{t\in G}$ over a discrete group.
Putting $H:=\{e\},$ noticing that $B_e\equiv C^*(\cB_H)$  and recalling that $C^*_\red(\cB)\equiv C^*_H(\cB),$  we get a conditional expectation $E\colon C^*_H(\cB)\to C^*(\cB_H).$
One can use the universal system of imprimitivity to construct these kind of conditional expectation (as follows).

\begin{theorem}\label{thm:contiditional expectation open subgroup}
	Let $\cB$ be a Fell bundle over $G$ and $H$ an open subgroup of $G.$
	If we see the elements of $C_c(\cB_H)$ as those $f\in C_c(\cB)$ vanishing outside $H,$ then we get an inclusion of *-algebras $C_c(\cB_H)\subset C_c(\cB)$ which can be extended to get inclusions
	\begin{align*}
    C^*(\cB_H)& \subset C^*(\cB) & C^*(\cB_H) & \subset C^*_H(\cB) & C^*_\red(\cB_H)&\subset C^*_\red(\cB). 
	\end{align*}
	Besides, the map $p\colon C_c(\cB)\to C_c(\cB_H)$ $ f\mapsto f|_H$ is the restriction of (unique) conditional expectations
	\begin{align*}
	P_G\colon& C^*(\cB)\to C^*(\cB_H) & P_H\colon & C^*_H(\cB) \to C^*(\cB_H) & P_\red &\colon C^*_\red(\cB)\to C^*_\red(\cB_H). 
	\end{align*}
	
	Moreover, the process of induction of *-representations of $C^*(\cB_H)$ to *-representations of $C^*(\cB)$ via $P_G$ is induction via the *-homomorphism $\intregrep{H\cB}\colon C^*(\cB)\to \bB(L^2_H(\cB)).$
\end{theorem}
	\begin{proof}
		The inclusion $C_c(\cB_H)\subset C_c(\cB)$ is $\|\ \|_1-$isometric and gives an inclusion of Banach *-algebras $L^1(\cB_H)\subset L^1(\cB),$ which in turn induces a *-homomorphism $\mu\colon C^*(\cB_H)\to C^*(\cB).$
		
		For all $f,g\in C_c(\cB_H)$ we have $\langle f,g\rangle_{L^2_H(\cB)}=p(f^**g)=f^**g.$
		Besides, if given $f,u\in C_c(\cB_H)$ we compute the element $fu\in C_c(\cB_H)$ using the action of $C^*(\cB_H)$ in $L^2_H(\cB)$ and the product of $C^*(\cB_H),$ we get the same element of $C_c(\cB_H).$
		Thus, the closure of $C_c(\cB_H)$ in $L^2_H(\cB)$ is $C^*(\cB_H).$
		
		For every $b\in \cB_H$ the restriction of $\regrep{H\cB}_b$ to $C^*(\cB_H)\subset L^2_H(\cB)$ is exactly $\regrep{\cB_H}_b.$
		Passing to integrated forms this implies that, for every $f\in C_c(\cB_H),$ the restriction of $\intregrep{H\cB}_{\mu(f)}$ to $C^*(\cB_H)$ is $\intregrep{\cB_H}_f.$
		Thus
		\begin{equation*}
     \|f\|_{C^*(\cB_H)}\equiv \| \intregrep{\cB_H}_f \|\leq \|\intregrep{H\cB}_{\mu(f)}\|\equiv \|\mu(f)\|_{C^*_H(\cB)}\equiv \|q^\cB_H(\mu(f))\|\leq \|\mu(f)\|_{C^*(\cB)}\leq \|f\|_{C^*(\cB_H)},
	\end{equation*}
	and we get that both $\mu$ and $\intregrep{H\cB}\circ \mu$ are isometries.
	It is then clear that the inclusions $C_c(\cB_H)\subset C^*(\cB)$ and $C_c(\cB_H)\subset C^*_H(\cB)$ can be extended to get inclusions $C^*(\cB_H)\subset C^*(\cB)$ and $C^*(\cB_H)\subset C^*_H(\cB).$
		
	To prove that $C_c(\cB_H)\subset C^*_\red(\cB)$ extends to an inclusion $C^*_\red(\cB_H)\subset C^*_\red(\cB)$ it suffices to show that $\| \intregrepEN{\cB_H}_f \|= \|\intregrepEN{\cB}_f\|$ for all $f\in C_c(\cB_H);$ which we do by using Theorem~\ref{thm:EXNG absorption principle} multiple times.
	Take a non degenerate *-representation $T\colon \cB\to \bB(X)$ with $T|_{B_e}$ faithful and consider the *-representation $\regrepEN{G}\otimes T,$ $\regrepEN{G}$ being the left regular representation of $G.$
	The coset space $H\setminus G :=\{Hr\colon r\in G\}$ gives a Hilbert space decomposition $L^2(G)=\bigoplus_{U\in H\setminus G}L^2(U)$ into $\regrepEN{G}|_H$-invariant subspaces.
	Moreover, the restriction of $\regrepEN{G}|_H$ to each one of the subspaces $L^2(U)$ is unitary equivalent to the left regular representation of $H,$ $\regrepEN{H}.$
	Then $(\regrepEN{G}\otimes T)|_{\cB_H} = (\regrepEN{G}|_H)\otimes (T|_{\cB_H})$ is unitary equivalent to a direct sum of $\# (H\setminus G)$ copies of $\regrepEN{H}\otimes (T|_{\cB_H}).$
	Hence, for every $f\in C_c(\cB_H),$
	\begin{equation*}
     \| \intregrepEN{\cB}_f \| 
     = \|(\regrepEN{G}\intform{\otimes}T)_f\|
     = \|[(\regrepEN{G}|_H)\otimes (T|_{\cB_H})]_f\|
     = \| [\regrepEN{H}\otimes (T|_{\cB_H})]_f \|
     = \| \intregrepEN{\cB_H}_f \|.
	\end{equation*}
	
	Our next goal is to construct both $P_H$ and $P_\red$ at the same time.
		For this we let $K$ be either $H$ or $\{e\}.$
		When $K=\{e\}$ and we write $P_{K}$ we mean $P_\red$ (recall that $C^*_\red(\cB)=C^*_{\{e\}}(\cB)$).
		In case $K=H,$ we view the module $L^2_K(\cB_H)=C^*(\cB_H)$ as a closed subspace of $L^2_K(\cB)$ as indicated before in this proof; while for $K=\{e\}$ the module $L^2_K(\cB_H)=L^2_e(\cB_H)$ may be regarded as the closure of $C_c(\cB_H)$ in $L^2_K(\cB).$
		In this last case $L^2_K(\cB_H)$ is a submodule of $L^2_K(\cB).$
		
		By \eqref{equ:ref properties of p}, to prove $p$ can be extended to a conditional expectation $P_K\colon C^*_K(\cB)\to C^*_K(\cB_H)$ it suffices to show that $p$ is contractive with respect to the C*-norms; which we do by adapting the ideas of \cite[Lemma 17.8]{Exlibro}.
		
		Fix $f\in C_c(\cB).$ 
		Take $h\in C_c(\cB_H)\subset L^2_K(\cB_H)\subset L^2_K(\cB)$ and an approximate unit $\{g_i\}_{i\in I}\subset C_c(\cB_H)$ of $L^1(\cB_H),$ like that of \cite[VIII 5.11]{FlDr88}.
		If $1_H\in C_0(G/K)$ is the indicator function of the projection of $H$ into $G/K,$ then the norm of $\psi^{K\cB}(1_H)\intregrep{K\cB}_f \intregrep{K\cB}_{g_i} h\in C_c(\cB_H)\subset L^2_K(\cB)$ is not greater than $\|\intregrep{K\cB}_f\|\|h\|_{L^2_K(\cB)}=\|f\|^\cB_K\|h\|_{L^2_K(\cB_H)}.$
		
		Now, given any $t\in H:$
		\begin{equation*}
		\left(\psi^{K\cB}(1_H)\intregrep{K\cB}_f \intregrep{K\cB}_{g_i} h\right)(t)
		= (f*g_i*h)(t)
		= \int_G\int_G f(r)g_i(r^{-1}s)h(s^{-1}t)\, drds.
		\end{equation*}
		For $f(r)g_i(r^{-1}s)h(s^{-1}t)$ to be non zero we must have $s^{-1}t\in H$ and $r^{-1}s\in H,$ which implies $r,s\in H$ because $t\in H.$
		Then 
		\begin{equation*}
		\left(\psi^{K\cB}(1_H)\intregrep{K\cB}_f \intregrep{K\cB}_{g_i} h\right)(t)
		= \int_H\int_H f(r)g_i(r^{-1}s)h(s^{-1}t)\, drds
		= (p(f)*g_i*h)(t)
		\end{equation*}
		and we get $\psi^{K\cB}(1_H)\intregrep{K\cB}_f \intregrep{K\cB}_{g_i} h = \intregrep{K\cB_H}_{p(f)*g_i}h.$
		Since the net $\{p(f)*g_i\}_{i\in I}$ converges to $p(f)$ in $C^*(\cB_H),$  $\|p(f)\|^{\cB_H}_K=\lim_i\|\intregrep{K\cB_H}_{p(f)*g_i}  \|\leq \|f\|^\cB_K;$ which is all we needed to justify the existence of the conditional expectation $P_K.$ 	
		The construction of $P_G$ is very easy: just set $P_G:=P_H\circ q^{\cB}_H.$
		
		Recall that to induce *-representations using $P_G$ one looks at $C^*(\cB)$ as a right $C^*(\cB_H)-$(pre)Hilbert module with inner product $\langle f,g\rangle = P_G(f^**g).$
		For $f,g\in C_c(\cB),$ $P_G(f^**g)= p(f^**g)=\langle f,g\rangle_{L^2_H(\cB)}.$
		Besides, for every $u\in C_c(\cB_H),$ the product $f*u$ computed in $C_c(\cB)$ equals the element $fu$ of \eqref{equ: action to induce} (because $\Delta_G|_H=\Delta_H$).
		Then the Hilbert module induced by $P_G$ is (unitary equivalent to) $L^2_H(\cB).$
		
		We have two *-representations of $C^*(\cB)$ in $L^2_H(\cB)$: $\intregrep{H\cB}$ and the one induced by $P_G,$ $\theta,$ which is characterized by the fact that $\theta(f)g=f*g$ for all $f,g\in C_c(\cB).$
		But we know $f*g=\intregrep{H\cB}_fg,$ so $\theta =\intregrep{H\cB}$ and we get that induction via $P_G$ is induction via $\intregrep{H\cB}\colon C^*(\cB)\to \bB(L^2_H(\cB)).$
  \end{proof}

\subsection{Induction in stages}\label{sec:induction in stages}

In~\cite[XI 12.15]{FlDr88} Fell shows that *-representations of Banach *-algebraic bundles may be induced in stages.
Rephrasing Fell's statement and using his ideas we get the following.

\begin{theorem}[Induction in stages, the Rieffel way]\label{thm:induction in stages Fell bundles}
	Let $\cB=\{B_t\}_{t\in G}$ be a Fell bundle and consider subgroups $H \sbgp K \sbgp G.$
	Then $\Ind_K^\cB( \regrep{H\cB_K} )\equiv \regrep{K\cB}\otimes_{\regrep{H\cB_K}}1 $ is unitary equivalent to $\regrep{H\cB}.$
	Moreover, the unitary equivalence is implemented by the unique adjointable operator
	\begin{equation}\label{equ:unitary for inducing in stages}
	U\colon L^2_K(\cB)\otimes_{\intregrep{H\cB_K}} L^2_H(\cB_K)\to  L^2_H(\cB)
	\end{equation}
	that maps $f\otimes_{\intregrep{H\cB_K}} u $ to $fu$ for all $f\in C_c(\cB)$ and $u\in C_c(\cB_K).$
	\begin{proof}
		The generalized restriction maps will be denoted indicating the groups as follows:
		\begin{align*}
		p^K_H & \colon C_c(\cB_K)\to C_c(\cB_H)&  p^K_H(f)(t)&=\Delta_K(t)^{1/2}\Delta_H(t)^{-1/2}f(t).
		\end{align*}
		A direct computation shows that $p^K_H\circ p^G_K = p^G_H.$
		
		For convenience we write $\pi$ instead of $\intregrep{H\cB_K}$.
		Given any $f,g\in C_c(\cB)\subset L^2_K(\cB)$ and $u,v\in C_c(\cB_K)\subset L^2_H(\cB_K)$ we have:
		\begin{multline}\label{equ:inner product identity of induction in stages}
		\langle f\otimes_\pi u,g\otimes_\pi v\rangle
		= \langle u, \pi(\langle f,g\rangle )v\rangle
		= \langle u,\pi(p^G_K(f^**g))v\rangle
		= \langle u, p^G_K(f^**g)*v\rangle =  \\
	= p^K_H( u^**p^G_K(f^**g)*v  )  
		= p^K_H( p^G_K((fu)^**(gv)))
		= p^G_H((fu)^**(gv))
		= \langle fu,gv\rangle.
		\end{multline}
		
		Thus there exists a unique inner product-preserving linear isometry 
		\begin{equation*}
		U\colon L^2_K(\cB)\otimes_{\pi} L^2_H(\cB_K)\to  L^2_H(\cB)
		\end{equation*}
		mapping $f\otimes_\pi u$ to $fu.$
		If we go to the two paragraph following  \eqref{equ:second inner product double induction} and consider the particular case $\cX=\cB$ and $H=K,$ we get that $\spn \{fu\colon f\in C_c(\cB),\ u\in C_c(\cB_K)\}$ is dense in $C_c(\cB)$ in the inductive limit topology.
		Thus $U$ is a unitary operator.
		
		Finally, for all $b\in \cB,$ $f\in C_c(\cB)$ and $u\in C_c(\cB_K)$ we have
		\begin{equation*}
		U^* \regrep{H\cB}_b U (f\otimes_\pi u)
		= U^* (b(fu))
		= U^* ((bf)u)
		= (bf) \otimes_\pi u 
		= \Ind_K^\cB(\regrep{H\cB_K})_b (f \otimes_\pi u).
		\end{equation*}
		Thus the proof follows by the density of elementary tensor products.
	\end{proof}
\end{theorem}

We now reproduce the statement of \cite[XI 12.15]{FlDr88} and explain Fell's proof in our notation.

\begin{corollary}[Induction in stages, the Fell way]\label{cor:induction in stages, the Fell way}
  Let $\cB=\{B_t\}_{t\in G}$ be Banach *-algebraic bunde, $H$ and $K$ closed subgroups of $G$ such that $H\subseteq K$ and $T\colon\cB_H\to \bB(X)$ a *-representation.
  Then the following are equivalent:
  \begin{enumerate}[(1)]
  	\item\label{item:T is Bpositive} $T$ is $\cB-$positive.
  	\item\label{item:T is BK positive and indT is B positive} $T$ is $\cB_K-$positive and $\Ind_H^{\cB_K}(T)$ is $\cB-$positive.
  \end{enumerate}
  If the conditions above hold, as it is the case if $\cB$ is a Fell bundle, $\Ind_K^\cB(T)$ is unitary equivalent to $\Ind_K^\cB(\Ind_H^{\cB_K}(T))$.
  \begin{proof}
  If $\cB$ is a Fell bundle, then so are $\cB_K$ and $\cB_H,$ and all the positivity conditions hold (Theorem~\ref{thm:characterization of positive representations}).
  Using Theorem~\ref{thm:induction in stages Fell bundles} and writing unitary equivalences as equalities we get
   \begin{equation*}
   \Ind_K^\cB(\Ind_H^{\cB_K}(T))= \regrep{K\cB}\otimes_{\regrep{H\cB_K}\otimes_T 1}1
   = \left(\regrep{K\cB}\otimes_{\regrep{H\cB_K}}1\right)\otimes_T 1
   = \Ind_K^\cB(\regrep{H\cB_K})\otimes_T 1
   =\Ind_H^\cB(T).
   \end{equation*}
   
   We now deal with the general case ($\cB$ is no longer a Fell bundle).
   By Theorem~\ref{thm:characterization of positive representations}, condition \eqref{item:T is Bpositive} is equivalent to $T|_{B_e}$ being $\cB-$positive.
   
   Assume \eqref{item:T is BK positive and indT is B positive} holds and let $S$ be $\Ind_H^{\cB_K}(T).$
   By \eqref{equ:inducing increases norm}, for all $b\in B_e,$ 
   \begin{equation*}
   \|T_b\|\leq \|\Ind_H^{\cB_K}(T)_b\|=\|S_b\|\leq \|\Ind_K^\cB(S)_b\|.
   \end{equation*}
   Then $T|_{B_e}$ is weakly contained in $\Ind_H^\cB(S)|_{B_e}$ and, consequently, any inner product of the form $\langle \xi,T_{b^*b}\xi\rangle$ ($\xi\in X$ and $b\in \cB$) can be approximated by others of the form $\langle \eta,\Ind_K^\cB(S)_{b^*b}\eta\rangle=\|\Ind_K^\cB(S)_{b}\eta\|^2.$
   Thus $T|_{B_e}$ is $\cB-$positive and \eqref{item:T is BK positive and indT is B positive}$\Rightarrow$\eqref{item:T is Bpositive}.
   
   Now assume that $T$ is $\cB-$positive, consider the bundle $C^*-$completion $\cC$ of $\cB$ and the canonical map $\rho\colon \cB\to \cC.$
   By Proposition~\ref{prop:can change a banach algebraic bundle by its completion}, there is a unique *-representation $T^\rho\colon \cC_H\to \bB(X)$ such that $T=T^\rho\circ (\rho|_{\cB_H}).$
   Besides, $T$ is $\cB_K-$positive (i.e. $T|_{B_e}$ is $\cB_K-$positive) and we may construct the induced *-representations $\Ind_H^{\cB_K}(T)$ and $\Ind_H^{\cC_K}(T^\rho),$ acting on Hilbert spaces $Y$ and $Z,$ respectively.
   We claim that $\Ind_H^{\cB_K}(T)$ is (unitary equivalent to) $\Ind_H^{\cC_K}(T^\rho)\circ (\rho|_{\cB_K}),$ which implies $\Ind_H^{\cB_K}(T)$ is $\cB-$positive (by Theorem~\ref{thm:characterization of positive representations}).
   
   The Hilbert spaces $Y$ and $Z$ are the closed linear span of tensors $f\otimes_T \xi$ and $\rho\circ f\otimes_{T^\rho}\xi,$ respectively, with $f\in C_c(\cB)$ and $\xi\in X.$
   Besides, there is a unique unitary $U\colon Y\to Z$ such that $U(f\otimes_T\xi)=f\circ \rho\otimes_{T^\rho}\xi $ (see the proof of Proposition~\ref{prop:can change a banach algebraic bundle by its completion}).
   For all $b\in \cB,$ $f\in C_c(\cB)$ and $\xi\in X$ we have
   \begin{equation*}
   U^* \Ind_H^{\cC_K}(T^\rho)_{\rho(b)}U(f\otimes_T\xi)
   =U^*(\rho\circ (bf) \otimes_{T^\rho}\xi)
   = bf\otimes_T\xi
   =\Ind_H^{\cB_K}(T)_b(f\otimes_T\xi).
   \end{equation*}
   Hence, $\Ind_H^{\cB_K}(T)$ is $\cB-$positive and $\Ind_H^{\cB_K}(T)^\rho=\Ind_H^{\cC_K}(T^\rho).$
   
   Induction preserves unitary equivalence classes of *-representations thus, up to unitary equivalences,
   \begin{multline*}
     \Ind_H^\cB(T)
      = \Ind_H^\cC(T^\rho)\circ \rho
      = \Ind_K^\cC(\Ind_H^{\cC_K}(T^\rho))\circ \rho
      = \Ind_K^\cC(\Ind_H^{\cB_K}(T)^\rho)\circ \rho
      = \Ind_K^\cB(\Ind_H^{\cB_K}(T));
   \end{multline*}
   which completes the proof.
\end{proof}
\end{corollary}

\subsection{Weak containment}
Exel and Ng introduced the notion of amenable Fell bundle in \cite{ExNg}.
As already mentioned in the Introduction, we prefer to use the term ``weak containment'' (WCP) over ``amenable''.
We do use the word ``amenable'' for groups.

\begin{definition}
 Let $\cB=\{B_t\}_{t\in G}$ be a Fell bundle and $H \sbgp K \sbgp G.$
 We say that $\cB$ has the
 \begin{itemize}
 	\item $HK-$WCP if $q^\cB_{HK}\colon C^*_K(\cB)\to C^*_H(\cB)$ is an isomorphism (see Remark~\ref{rmk:composition of q maps}).
 	\item $H-$WCP if $q^\cB_H\colon C^*(\cB)\to C^*_H(\cB)$ is an isomorphism (this is the $HG-$WCP).
 	\item WCP if $\cB$ has the $\{e\}G-$WCP.
 \end{itemize}
\end{definition}

\begin{remark}
By Corollary~\ref{cor:norm of reg rep is induced norm}, $\cB$ has the $HK-$WCP if and only if $\|\intregrep{H\cB}_f\|=\|\intregrep{K\cB}_f\|$ for all $f\in L^1(\cB).$
Then $\cB$ has the $H-$WCP if and only if $\intregrep{H\cB}\colon C^*(\cB)\to \bB(L^2_H(\cB))$ is faithful.
Besides, $\cB$ has the WCP if and only if $\intregrepEN{\cB}\equiv\intregrep{\{e\}\cB}\colon C^*(\cB)\to \bB(L^2_e(\cB))$ is faithful (i.e. $C^*(\cB)=C^*_\red(\cB)$ canonically).
\end{remark}

\begin{remark}
	The composition $q^\cB_H=q^\cB_K\circ q^\cB_{KH}$ is faithful if and only if both $q^\cB_K$ and $q^\cB_{KH}$ are faithful.
	In other words, $\cB$ has the $H-$WCP if and only if it has both the $K-$WCP and the $HK-$WCP.
\end{remark}

There are two reasonable notions of $H-$WCP: the upward ($HG-$WCP) and the downward ($\{e\}H-$WCP).
We won't pay much attention to the second one, but we do have some things to say about it.

\begin{proposition}\label{prop:inner amenable subgroup and downward WCP}
	Let $\cB=\{B_t\}_{t\in G}$ be a Fell bundle and let $H$ be an open inner amenable subgroup of $G.$
	If $C^*_\red(\cB)$ is nuclear, then  so it is $C^*_\red(\cB_H),$ $\cB_H$ has the WCP and $C^*_H(\cB)=C^*_\red(\cB)$ (canonically).
\end{proposition}
\begin{proof}
 Theorem~\ref{thm:contiditional expectation open subgroup} gives a conditional expectation $P_\red \colon C^*_\red(\cB)\to C^*_\red(\cB_H),$ then $C^*_\red(\cB_H)$ is nuclear and, by Corollary~\ref{cor:inner amenability and nuclearity}, $\cB_H$ has the WCP.
 So, if we take a faithful non degenerate *-representation $T\colon B_e\to \bB(X),$ $\intform{\Ind}_e^{\cB_H}(T)$ weakly contains any non degenerate *-representation of $\cB_H.$ 
  By induction in stages, $\Ind_e^\cB(T) = \Ind_H^\cB(\Ind_e^{\cB_H}(T)).$
  Hence, by Remark~\ref{rem:induced and faithful integrated form}, we may think of both $C^*_H(\cB)$ and $C^*_\red(\cB)$ as the closure of $\Ind_e^\cB(T)(C_c(\cB)).$
\end{proof}

\begin{corollary}\label{cor:isomorphisms of q maps}
	Let $\cB=\{B_t\}_{t\in G}$ be a Fell bundle and consider subgroups $H \sbgp K \sbgp G.$
	If $\cB_K$ has the $H-$WCP, then $\cB$ has the $HK-$WCP.
\end{corollary}
\begin{proof}
    Theorem~\ref{thm:induction in stages Fell bundles}, combined with the assumption that $\intregrep{H\cB_K}\colon C^*(\cB_K)\to\bB(L^2_H(\cB_K))$ is faithful, implies that that  for all $f\in L^1(\cB)$ we have $\|\intregrep{K\cB}_f\|= \|(\intregrep{K\cB}_f\otimes_{\intregrep{H\cB_K}}1)_f\|=\|\intregrep{H\cB}_f\|.$
\end{proof}

We seriously doubt that the converse of the Corollary above holds in full generality, but it does if both $H$ and $K$ are normal in $G.$
The proof of this claim is not simple and it will be completed in Section~\ref{sec:weak containment with respect to subgroups} after a sequence of (what we consider to be) some interesting intermediate results.

\section{An absorption principle}\label{sec:absorption principle}

One may view the (unitary) representation theory of any group $G$ as the (*-)representation theory of the trivial Fell bundle over $G$ with constant fibre $\bC,$ which we have denoted $\cT_G=\{\bC\delta_t\}_{t\in G}.$
If $H \sbgp  G,$ then $(\cT_G)_H\equiv \cT_H$ and the induction of representations from $H$ to $G$ may be regarded as the induction from $C^*(H):=C^*(\cT_H)$ to $C^*(G):=C^*(\cT_G)$ via the $C^*(H)-$Hilbert module $L^2_H(G):=L^2_H(\cT_G).$

\begin{definition}
 The $H-$group C*-algebra of $G$ is $C^*_H(G)\equiv C^*_H(\cT_G)\subset \bB(L^2_H(G))$ and the $H-$regular representation of $G$ is 
\begin{equation*}
 \regrep{H}\colon G\to \bB(L^2_H(G)),\ t\mapsto \regrep{H\cT_G}_{1\delta_t}.
\end{equation*}
Given a *-representation $U\colon H\to \bB(Y_A),$ the induced representation 
\begin{equation*}
 \Ind_H^G(U)\colon G\to \bB(L^2_H(G)\otimes_{\intform{U}}Y_A)
\end{equation*}
is given by $\Ind_H^G(U)_t =\regrep{H}_t\otimes_{\intform{U}}1,$ for all $t\in G.$
\end{definition}

\begin{remark}
  If we say $V\colon G\to \bB(Y_A)$ is a non degenerate *-representation we mean that $V$ is a unitary representation.
\end{remark}

The absorption principle we present below emerged during our attempt to generalise Fell's and Exel-Ng's principles (as discussed in the Introduction).
The computations revealed that the ``moving around'' technique of \cite[pp 515]{ExNg} is appropriate when the inducing subgroup $H$ is stable by conjugation (normal).
In the general case, conjugation ``moves $H$ around'' and this is the reason why the conjugated groups $tH\tmu$ appear in our principle. 

\begin{theorem}\label{thm:Fells absorption principle I}
 Let $\cB=\{B_t\}_{t\in G}$ be a Fell bundle,  $H \sbgp G$ and consider non degenerate *-representations $T\colon \cB\to \bB(Y_A)$ and $V\colon H\to \bB(Z_C).$
 If for each $t\in G$  we denote ${}_tV$ the conjugated representation $tH\tmu \to \bB(Z_C), \ r\mapsto V_{\tmu r t},$ then 
 \begin{equation}\label{equ:thesis inequality for Fell absorption I}
 \|T\intform{\otimes} \Ind_H^G(V)_f  \|= \sup_{t\in G}\|\intform{\Ind}_{tH\tmu }^{\cB}(T|_{\cB_{tH\tmu}}\otimes {}_tV)_f\| \qquad \forall\ f\in C^*(\cB).
 \end{equation}
 
 In particular, if both $Y_A$ and $Z_C$ are Hilbert spaces, then $T\otimes \Ind_H^G(V)\sim \{\Ind_{tH\tmu }^{\cB}(T|_{\cB_{tH\tmu}}\otimes {}_tV) \}_{t\in G}.$
\end{theorem}

 \begin{proof}
 	For convenience we introduce the following notation 
 	\begin{align*}
 	{}^tH&\equiv tH\tmu & {}_{t|}T & \equiv T|_{\cB_{tH\tmu}} & U&\equiv \Ind_H^G(V).
 	\end{align*}
 	
 	The Hilbert module induced by (the integrated form of) $V$ is $W_C:=L^2_{H}(G)\otimes_{V} Z_C$ and we regard the $A\otimes C-$Hilbert module $Y_A\otimes^G W_C:=\ell^2(G)\otimes (Y_A\otimes W_C)$ as an $\ell^2-$direct sum of $\# G$ copies of $Y_A\otimes W_C$.
 	Similarly, the direct sum of $\#G$ copies of $T\otimes U\equiv T\otimes \Ind_H^\cB(V)$ is 
 	\begin{align*}
 	T\otimes^G U &\colon \cB\to \bB(Y_A\otimes^G W_C) & b&\mapsto 1_{\ell^2(G)}\otimes (T\otimes U)_b,
 	\end{align*}
 	which is the composition of $T\otimes U$ with the unital and faithful *-representation 
 	\begin{equation*}
 	\Theta\colon \bB(Y_A\otimes W_C)\to \bB(Y_A\otimes^G W_C),\ \Theta(R)=1_{\ell^2(G)}\otimes R. 
 	\end{equation*}
 	Note that the identity $T\otimes^G U = \Theta\circ (T\otimes U)$ holds for integrated forms.
 	Thus it suffices to prove
 	\begin{equation}\label{equ:equivalent condition of thesis for absorption principle I}
 	\|(T\otimes^G U)^{\intform{\ }}_f  \|= \sup\{ \|\intform{\Ind}_{{}_t H}^{\cB}({}_{t|}T\otimes {}_tV)_f\|\colon t\in G \}\qquad \forall\ f\in L^1(\cB).
 	\end{equation}
 	
 	We claim there exists a unique linear map 
 	\begin{equation}\label{equ:map L}
 	L\colon C_c(G,Y_A\otimes^G Z_C )\to Y_A\otimes^G   W_C
 	\end{equation}
 	which is continuous in the inductive limit topology and
 	\begin{equation}\label{equ:defining condition for L}
 	L( f\odot [\delta_t\otimes \xi\otimes \eta ]) =\delta_t\otimes ( \xi \otimes [f\otimes_{V} \eta])
 	\end{equation}
 	for all $f\in C_c(G),$ $\xi\in Y_A,$ $\eta\in Z_C$ and $t\in G,$ where:
 	\begin{itemize}
 		\item We regard $C_c(G)$ as $C_c(\cT_G)$ and think of $L^2_{{}_t H}(G)$ as a completion of $C_c(G).$
 		\item If $f\in C_c(G)$ and $w\in Y_A\otimes^G Z_C ,$ $f\odot w\in C_c(G,Y_A\otimes^G Z_C)$ is given by $(f\odot w) (r):=f(r)w.$
 	\end{itemize}
 	
 	Uniqueness of $L$ follows from the fact that the functions of the form $f\odot (\delta_t\otimes \xi\otimes \eta )$ span a subset of $C_c(G,Y_A\otimes^G Z_C)$ which is dense in the inductive limit topology (see~\cite[II 14.6]{FlDr88}).
 	Take functions $u,v\in C_c(G,Y_A\otimes^G Z_C)$ that can be expressed as elementary tensors 
 	\begin{align*}
 	u &= f\odot (\delta_r\otimes \xi\otimes \eta ) & v&=g\odot (\delta_s\otimes \zeta\otimes \kappa)
 	\end{align*}
 	as explained before.
 	If $\delta_{r,s}$ is the Kronecker delta function, then the inner product of (the candidates for) $L(u)$ and $L(v)$ is
 	\begin{multline}\label{equ:basic inner product identity to construct L}
 	\langle \delta_r\otimes (\xi \otimes [f\otimes_{V} \eta]),\delta_s\otimes (\zeta \otimes [g \otimes_{V} \kappa])\rangle =\\
 	=  \delta_{r,s}\langle \xi,\zeta\rangle \otimes \langle f\otimes_{V} \eta,g \otimes_{V} \kappa\rangle\\
 	= \delta_{r,s} \langle \xi,\zeta\rangle \otimes\int_{H} \int_G \Delta_G(t)^{1/2}\Delta_H(t)^{-1/2} \overline{f(z)} g(zt) \langle \eta, V_t  \kappa\rangle \, d_Gzd_Ht\\
 	= \int_{H}  \int_G \Delta_G(t)^{1/2}\Delta_{H}(t)^{-1/2} \overline{f(z)} g(zt) \langle \delta_r \otimes \xi\otimes\eta, (1\otimes 1\otimes V_t)(\delta_s\otimes \zeta\otimes \kappa )\rangle \, d_Gzd_Ht\\
 	= \int_{H}  \int_G \Delta_G(t)^{1/2}\Delta_{H}(t)^{-1/2} \langle u(z), (1\otimes 1\otimes V_t)v(zt)\rangle \, d_Gzd_Ht.
 	\end{multline}
 	
 	Fix a compact $D\subset G$ and denote $C_D(G,Y_A\otimes^G Z_C)$ the set formed by those $f\in C_c(G,Y_A\otimes^G Z_C)$ with support contained in $D;$ this set is in fact a Banach space with the norm $\|\ \|_\infty.$
 	Let $C_D^\odot$ be the subspace of $C_D(G,Y_A\otimes^G Z_C)$ spanned by the functions of the form $f\odot (\delta_t\otimes \xi\otimes \eta)$ with $f\in C_D(G),$ $t\in G,$ $\xi\in Y_A$ and $\eta\in Z_C.$
 	We clearly have $C_c(G)C_D^\odot \subset C_D^\odot.$
 	If for each $t\in G$ we define $C_D^\odot(t)$ as the closure of $\{u(t)\colon u\in C_D^\odot\},$ then $C_D^\odot(t)=Y_A\otimes^G Z_C$ for every $t$ in the interior of $D$ and $\{0\}$ otherwise.
 	By \cite[Lemma 5.1]{Ab03}, the closure of $C_D^\odot$ in $C_c(G,Y_A\otimes^G Z_C)$ with respect to the inductive limit topology is $\{f\in C_c(G,Y_A\otimes^G Z_C)\colon f(t)\in C_D^\odot(t)\, \forall\, t\in G  \}=C_D(G,Y_A\otimes^G Z_C).$
 	
 	Take any $u,v\in C_D(G,Y_A\otimes^G Z_C).$
 	By the preceding paragraph there are sequences $\{u_n\}_{n\in \bN}$ and $\{v_n\}_{n\in \bN}$ in $C_D^\odot$ converging uniformly to $u$ and $v,$ respectively.
 	Then for all $n\in \bN$ there exists a positive integer $m_n$ and (for each $j=1,\ldots,m_n$) elements $f_{n,j},g_{n,j}\in C_D(G),$ $r_{n,j},s_{n,j}\in G,$ $\xi_{n,j},\zeta_{n,j}\in Y_A$ and $\zeta_{n,j},\kappa_{n,j}\in Z_C$ such that
 	\begin{align*}
 	u_n &=\sum_{j=1}^{m_n} f_{n,j}\odot (\delta_{r_{n,j}}\otimes \xi_{n,j}\otimes \eta_{n,j} ) & v&=\sum_{j=1}^{m_n}g_{n,j}\odot (\delta_{s_{n,j}}\otimes \zeta_{n,j}\otimes \kappa_{n,j}).
 	\end{align*}
 	
 	Let $\alpha_D$ be the measure of $D$ with respect to  $d_Gs$  and $\beta_D$ that of $H\cap (D^{-1}D)$ with respect to $d_Ht.$
 	If $\gamma_D:=\sup\{|\Delta_G(t)^{1/2}\Delta_{H}(t)^{-1/2}|\colon t\in H\cap (D^{-1}D) \},$ then \eqref{equ:basic inner product identity to construct L} implies that for all $p,q\in \bN$ we have
 	\begin{multline*}
 	\|\sum_{j=1}^{m_p} \delta_{r_{p,j}} \otimes (\xi_{p,j}\otimes [f_{p,j}\otimes_{V} \eta_{p,j}] )-\sum_{k=1}^{m_q}\delta_{s_{q,k}}\otimes ( \zeta_{q,k}\otimes [g_{q,k}\otimes_{V} \kappa_{q,k}] )\|^2 =\\
 	= \| \int_{H}  \int_G \Delta_G(t)^{1/2}\Delta_{H}(t)^{-1/2} \langle (u_p-v_q)(z), (1\otimes 1\otimes V_t)(u_p-v_q)(zt)\rangle \, d_Gzd_Ht \|\\
 	\leq \alpha_D\beta_D\gamma_D \|u_p-v_q\|_\infty^2.
 	\end{multline*}
 	Several conclusion arise from the inequality above:
 	\begin{enumerate}
 		\item If $u=v$ and $v_q=u_q,$ it follows that $\{\sum_{j=1}^{m_p} \delta_{r_{p,j}} \otimes (\xi_{p,j}\otimes [f_{p,j}\otimes_{V} \eta_{p,j}]) \}_{p\in \bN}$ is a Cauchy sequence in $Y_A\otimes^G W_C,$ whose limit we denote $L_D(\{u_n\}_{n\in \bN}).$
 		\item If $u=v$ and we take limit in $p$ and $q,$  it follows that $L_D(\{u_n\}_{n\in \bN})= L_D(\{v_n\}_{n\in \bN}).$
 		Thus we may define a function $L_D\colon C_D(G,Y_A\otimes^G Z_C)\to Y_A\otimes^G W_C,\ u\mapsto L_D(u):=L_D(\{u_n\}_{n\in \bN}).$
 		\item Taking limit in $p$ and $q$ we obtain $\| L_D(u)-L_D(v) \|\leq \sqrt{\alpha_D\beta_D\gamma_D}\|u-v\|_{\infty},$ so $L_D$ is continuous.
 		\item $L_D(f\odot (\delta_t\otimes \xi\otimes\eta))=\delta_t\otimes \xi\otimes (f\otimes_V\eta)$ and $L_D$ is linear when restricted to $C_D^\odot.$
 		Thus $L_D$ is linear.
 		\item By~\eqref{equ:basic inner product identity to construct L} and the continuity of $L_D,$
 		\begin{equation}\label{equ:L and inner product}
 		\langle L_D(u),L_D(v)\rangle = \int_G \int_H \Delta_G(t)^{1/2}\Delta_{H}(t)^{-1/2}\langle u(s), (1\otimes 1 \otimes V_t)v(st)\rangle \, d_Ht d_Gs .
 		\end{equation}
 	\end{enumerate}
 	
 	It follows immediately that $L_E$ is an extension of $L_D$ whenever $E\subset G$ is a compact set containing $D.$
 	Then there exists a unique function $L\colon C_c(G,Y_A\otimes^G Z_C)\to Y_A\otimes^G W_C$ extending all the $L_D$'s.
 	This extension is linear and continuous in the inductive limit topology by \cite[II 14.3]{FlDr88}.
 	Note also that $L$ satisfies~\eqref{equ:defining condition for L} and, consequently, it has dense range.

 	Given $t\in G,$ $f\in C_c(\cB),$ $\xi\in Y_A$ and $\eta\in Z_C$ we define $[t,f,\xi,\eta]\in C_c(G,Y_A\otimes^G Z_C)$ by
 	\begin{equation}\label{equ:right brakets for triple tensors}
 	[t,f,\xi,\eta](r)= \Delta_G(t)^{-1/2} \delta_t\otimes T_{f(r\tmu)}\xi\otimes \eta
 	\end{equation}

 	We claim there exists a unique linear and continuous map
 	\begin{equation}\label{equ:domain and range of I}
 	I\colon \bigoplus_{t\in G} L^2_{{}_t H}(\cB)\otimes_{{}_{t|}T\otimes {}_tV} (Y_A\otimes Z_C) \to Y_A\otimes^G W_C
 	\end{equation}
 	such that for all $t\in G,$ $f\in C_c(\cB)\subset L^2_{{}_t H}(\cB),$ $\xi\in Y_A$ and $\eta\in Z_C,$
 	\begin{equation}\label{equ:defining property for I}
 	I( f\otimes_{{}_{t|}T\otimes {}_tV} (\xi\otimes \eta) ) = L([t,f,\xi,\eta]).
 	\end{equation}
 	
 	The direct summand $L^2_{{}_t H}(\cB)\otimes_{{}_{t|}T\otimes {}_tV} (Y_A\otimes Z_C)$ of $\bigoplus_{s\in G} L^2_{{}_{s}H}(\cB)\otimes_{{}_{s|}T\otimes {}_{s}V} (Y_A\otimes Z_C)$ is generated by elements $f\otimes_{{}_{t|}T\otimes {}_tV} (\xi\otimes \eta),$ with $f\in C_c(\cB),$ $\xi\in Y_A$ and $\eta\in Z_C.$
 	Notice the ``$t$'' in the tensor product indicates the direct summand the tensor belongs to.
 	
 	Take vectors $f\otimes_{{}_{r|}T\otimes {}_r V}(\xi\otimes \eta)$ and $g\otimes_{{}_{t|}T\otimes {}_tV}(\zeta\otimes\kappa).$
 	If $\Psi_r(w):=\Delta_G(w)^{1/2}\Delta_{{}_rH}(w)^{-1/2},$ then by~\eqref{equ:L and inner product} we have
 	\begin{multline*}
 	\langle L([r,f,\xi,\eta]),L([t,g,\zeta,\kappa])\rangle=\\
 	= \int_H\int_G \Psi_e(w)\Delta_G(rt)^{-1/2} \langle \delta_r\otimes T_{f(z\rmu )}\xi \otimes \eta ,\delta_t\otimes T_{g(zw\tmu)}\zeta\otimes V_w\kappa\rangle  \, d_Gzd_Hw\\
 	= \int_H\int_G \Psi_e(w) \Delta_G(\rmu)\delta_{r,t} \langle T_{f(z\rmu )}\xi \otimes \eta ,T_{g(zw\rmu)}\zeta\otimes V_w\kappa\rangle  \, d_Gzd_Hw\\
 	= \int_H\int_G \Psi_e(w) \delta_{r,t} \langle T_{f(z)}\xi \otimes \eta ,T_{g(zr w\rmu)}\zeta\otimes V_w\kappa\rangle  \, d_Gzd_Hw\\
 	= \int_H\int_G \Psi_r(rw\rmu) \delta_{r,t} \langle \xi \otimes \eta ,T_{f(z)^*g(zr w\rmu )}\zeta\otimes {}_r V_{rw\rmu}\kappa\rangle  \, d_Gzd_Hw \\
 	= \int_{{}_rH}\int_G \Psi_r(w) \delta_{r,t} \langle \xi \otimes \eta ,T_{f(z)^*g(zw)}\zeta\otimes {}_r V_{w}\kappa\rangle  \, d_Gzd_{{}_rH}w \\
 	= \delta_{r,t} \langle \xi \otimes \eta ,({}_{r|}T\otimes {}_r V)^{\intform{\ }}_{p^G_{{}_rH}(f^**g)}(\zeta\otimes \kappa)\rangle  \\
 	= \langle f\otimes_{{}_{r|}T\otimes {}_r V}(\xi\otimes \eta),g\otimes_{{}_{t|}T\otimes {}_tV}(\zeta\otimes\kappa)\rangle.
 	\end{multline*}
 	Thus there exists a linear isometry $I$ satisfying both~\eqref{equ:domain and range of I} and~\eqref{equ:defining property for I}.
 	Besides, $I$ preserves inner products.

 	Following the ideas of~\cite{ExNg} we define
 	\begin{align*}
 	\rho & \colon G\to \bB(Y_A\otimes^G W_C) & \rho_t &:= \lt_t\otimes 1_{Y_A}\otimes 1_{W_C},
 	\end{align*}
 	where $\lt\colon G\to \bB(\ell^2(G))$ is the left regular representation if we consider $G$ as a discrete group ($\lt_t(\delta_s)=\delta_{ts}$).
 	Note $\rho$ and $\Theta$ have commuting ranges, so the range of $\rho$ commutes with that of $T\otimes^GU.$
 	
 	We remark that the continuity of $\rho$ (which may fail) plays no r\^ole in the proof, we use $\rho$ just to ``move things around''.
 	
 	Let $K$ be the image of the map $I$ of~\eqref{equ:domain and range of I}.
 	We claim that $ G\cdot K:= \spn\{\rho_t K\colon t\in G\}$
 	is dense in $Y_A\otimes^G W_C.$
 	To prove this we define, for each $t\in G,$ the function 
 	\begin{equation*}
 	\mu_t \colon C_c(G,Y_A\otimes^G Z_C)\to C_c(G,Y_A\otimes^G Z_C),\ (\mu_t f)(z) =(\lt_t\otimes 1_{Y_A}\otimes 1_{Z_C})f(z). 
 	\end{equation*}
 	In particular, $\mu_t (f\odot (\delta_r\otimes \xi\otimes \eta))=f\odot (\delta_{tr}\otimes \xi\otimes \eta).$
 	Hence,
 	\begin{multline*}
 	L\circ \mu_t (f\odot (\delta_r\otimes \xi\otimes \eta))=
 	L(f \odot (\delta_{tr}\otimes \xi\otimes \eta))
 	=\delta_{tr}\otimes \xi \otimes [f\otimes_{V} \eta] = \\
 	=\rho_t  (\delta_{r}\otimes \xi \otimes [f\otimes_{V} \eta])
 	=\rho_t L(f\odot( \delta_r\otimes \xi\otimes \eta )).
 	\end{multline*}
 	Since both $L\circ \mu_t $ and $\rho_t \circ L$ are linear and continuous in the inductive limit topology and agree on a dense set, it follows that $L\circ \mu_t =\rho_t \circ L.$
 	Thus $\overline{G\cdot K}$ contains the image through  $L$ of
 	\begin{equation*}
 	K_0:= \spn\{ \mu_t [r,f,\xi,\eta]\colon r,t\in G,\, f\in C_c(\cB),\, \xi\in Y_A,\, \eta\in Z_C  \}\subset C_c(G,Y_A\otimes^G Z_C).
 	\end{equation*}
 	
 	Note $C(G)K_0\subset K_0.$
 	Besides, 
 	\begin{equation}\label{equ:mu []}
 	\mu_r [t,f,\xi,\eta](z)
 	=\Delta_G(t)^{-1/2}\delta_{rt}\otimes T_{f(z\tmu)} \xi\otimes \eta.
 	\end{equation}
 	Fixing $z\in G$ and varying $r,t\in G,$ $\xi\in Y_A,$ $\eta\in Z_C$ and $f\in C_c(\cB),$ the elements we obtain on the right hand side of \eqref{equ:mu []} are all those of the form $\delta_{s}\otimes T_b \xi\otimes \eta,$ for arbitrary $s\in G,$ $b\in \cB,$ $\xi\in Y_A$ and $\eta\in Z_C.$
 	This last type of vectors span $Y_A\otimes^G Z_C$ because $T$ is non degenerate, and we conclude (using~\cite[II 14.3]{FlDr88}) that $K_0$ is dense in $C_c(G,Y_A\otimes^G Z_C)$ in the inductive limit topology.
 	Hence $\overline{G\cdot K}$ contains the dense set $L(K_0)$ and it follows that $\overline{G\cdot K}=Y_A\otimes^G W_C.$
 	
 	Our next goal is to show that defining the *-representation
 	\begin{equation*}
 	R:=\bigoplus_{r\in G} \Ind_{{}_rH}^\cB({}_{r|}T\otimes {}_r V )
 	\end{equation*}
 	the identity 
 	\begin{equation*}
 	(T\otimes^G U)_b \circ I = I\circ R_b
 	\end{equation*}
 	obtains for all $b\in \cB.$
 	To prove this claim we fix $r,p,q\in G,$ $b\in B_r,$ $f\in C_c(\cB),$ $g\in C_c(G),$ $\xi,\zeta\in Y_A$ and $\eta, \kappa\in Z_C.$
 	For convenience we denote $u$ and $v$ the tensors $f\otimes_{{}_{p|}T\otimes {}_{p}V} (\xi\otimes \eta) $ and $g\odot (\delta_q\otimes \zeta\otimes \kappa),$ respectively.
 	Recalling~\eqref{equ:L and inner product} we get
 	\begin{multline*}
 	\langle (T\otimes^G U)_b \circ I (u ),L(v ) \rangle=\\
 	= \langle  L([p,f,\xi,\eta]),(T\otimes^G U)_{b^*}(\delta_q\otimes \zeta\otimes (g\otimes_{V}\kappa)) \rangle\\
 	= \langle  L([p,f,\xi,\eta]),\delta_q\otimes T_{b^*}\zeta\otimes ( \regrep{HG}_\rmu(g)\otimes_{V}\kappa ) \rangle \\
 	= \langle  L([p,f,\xi,\eta]),L(\regrep{HG}_\rmu(g)\odot( \delta_q \otimes T_{b^*}\zeta\otimes \kappa))\rangle\\
 	=\int_H\int_G  \Psi_e(w)  \langle [p,f,\xi,\eta](z),\delta_q \otimes T_{b^*}\zeta\otimes V_w\kappa \rangle g(r zw)\, d_Gzd_Hw\\
 	=\int_H\int_G  \Psi_e(w) \Delta_G(p)^{-1/2} \langle \delta_p\otimes T_{f(zp^{-1})}\xi\otimes \eta,\delta_q \otimes T_{b^*}\zeta\otimes V_w\kappa \rangle g(r zw)\, d_Gzd_Hw\\
 	=\int_H\int_G  \Psi_e(w) \Delta_G(p)^{-1/2}\delta_{p,q} \langle T_{bf(\rmu zp^{-1})}\xi\otimes \eta,\zeta\otimes V_w\kappa \rangle g(zw)\, d_Gzd_Hw\\
 	=\int_H\int_G  \Psi_e(w) \Delta_G(p)^{-1/2}\delta_{p,q} \langle T_{(bf)(zp^{-1})}\xi\otimes \eta,\zeta\otimes V_w\kappa \rangle g(zw)\, d_Gzd_Hw\\
 	=\langle L([p,bf,\xi,\eta]),L(g\odot (\delta_q\otimes \zeta\otimes \kappa))\rangle
 	=\langle I(bf\otimes_{S^q\otimes V^q}(\xi\otimes \eta)),L(v)\rangle\\
 	= \langle I\circ R_b(u),L(v)\rangle.
 	\end{multline*}
 	Since $u$ and $v$ are arbitrary basic tensors and $L$ has dense range, by linearity and continuity we get that $(T\otimes^G U)_b \circ I = I\circ R_b;$ which implies that
 	\begin{equation}\label{equ:T,U,I,R}
 	(T\otimes^G U)^{\intform{\ }}_f \circ I = I\circ \intform{R}_f,\qquad \forall \ f\in L^1(\cB).
 	\end{equation}
 	
 	Consider the *-representations
 	\begin{align*}
 	\Omega_T&\colon C^*(\cB)\to \bB(Y_A\otimes^G W_C)\\
 	\Omega_R&\colon C^*(\cB)\to \bB\left( \bigoplus_{r\in G} L^2_{{}_rH}(G)\otimes_{{}_{r|}T\otimes {}_r V}(Y_A\otimes Z_C)    \right)
 	\end{align*}
 	such that $\Omega_T\circ \intregrep{\cB } = T\otimes^G U$ and $\Omega_R\circ  \intregrep{\cB }  = \intform{R}.$
 	Since the image of $\intregrep{\cB }\colon L^1(\cB)\to C^*(\cB)$ is dense in $C^*(\cB),$ \eqref{equ:T,U,I,R} implies $\Omega_T(f)\circ I=I\circ \Omega_R(f)$ for all $f\in C^*(\cB).$
 	Besides, the image of $\Omega_T$ is the closure of $(T\otimes^G U)^{\intform{\ }}(L^1(\cB)),$ which commutes with the image of $\rho.$
 	Thus the images of $\Omega_T$ and $\rho$ commute.
 	
 	Assume $f\in C^*(\cB)$ is in the kernel of $\Omega_R.$
 	Then, for all $u\in \bigoplus_{r\in G} L^2_{{}_rH}(G)\otimes_{{}_{r|}T\otimes {}_r V}(Y_A\otimes Z_C)$ and $t\in G,$  we have
 	\begin{equation*}
 	\Omega_T(f)\circ \rho_t \circ I(u) 
 	= \rho_t \circ \Omega_T(f)\circ I(u)
 	=\rho_t \circ I\circ \Omega_R(f)(u) = 0.
 	\end{equation*}
 	Recalling that $I(u)\in K$ and that  $G\cdot K$ spans a dense subset of $Y_A\otimes^G W_C,$ we deduce that $\Omega_T(f)=0.$
 	
 	By thinking of the image of $\Omega_R$ as the quotient of $C^*(\cB)$ by the kernel of $\Omega_R,$ we can define a morphism of *-algebras $\Phi\colon \Omega_R(C^*(\cB))\to \Omega_T(C^*(\cB))$ such that $\Phi\circ \Omega_R = \Omega_T.$
 	Since $\Phi$ is contractive, $\| \Omega_R(f) \|\geq \|\Omega_T(f)\|$ for all $f\in C^*(\cB).$
 	The inequality $\|\Omega_R(f)\|\leq \|\Omega_T(f)\|$ is trivial because $I$ is an isometry and $\Omega_T(f)\circ I=I\circ \Omega_R(f).$
 	Then $\Phi$ is isometric and, consequently, a C*-isomorphism.
 	It thus follows that for all $f\in C^*(\cB),$
 	\begin{align*}
 \| (T\otimes U)_f \|	& = \| (T\otimes^G U)_f \|
 	=  \|\Omega_T(\intregrep{\cB }_f )\|
 	=  \|\Omega_R(\intregrep{\cB }_f )\|
 	= \| \intform{R}_f\|
 	= \| \bigoplus_{r\in G}  \intform{\Ind}_{{}_rH}^\cB({}_{r|}T\otimes {}_r V )_f \|\\
 	& = \sup\{\|  \intform{\Ind}_{{}_rH}^\cB({}_{r|}T\otimes {}_r V )_f\|\colon r\in G\};
 	\end{align*}
 	showing~\eqref{equ:equivalent condition of thesis for absorption principle I} holds and completing the proof.
 \end{proof}

\begin{corollary}\label{cor:of FEexell absorption principle}
 Let $\cB=\{B_t\}_{t\in G}$ be a Fell bundle and $H \sbgp G.$
 Given non degenerate *-representations $T\colon \cB\to \bB(X_A)$ and $V\colon H\to \bB(Z_C),$  for all $f\in C^*(\cB)$ we have
 \begin{equation}\label{equ:bound form FExel absorption I}
  \| (T\intform{\otimes}\Ind_H^G(V))_f \|\leq \sup_{t\in G}\| f \|_{tHt^{-1}}^\cB.
 \end{equation}
 The bound above turns into an equality if the following two conditions are satisfied:
 \begin{enumerate}[(a)]
 	\item $|\int_H f(t)\, dt|\leq \|\intform{V}_f\|$ for all $f\in L^1(H);$ as it is the case if $V$ is the trivial representation $\kappa_H\colon H\to\bC.$
 	\item For every $t\in G,$ the integrated form $T|_{\cB_{tHt^{-1}}}\colon C^*(\cB_{tHt^{-1}})\to \bB(X_A)$ is faithful.
  \end{enumerate}
\end{corollary}
\begin{proof}
The bound \eqref{equ:bound form FExel absorption I} follows immediately from Theorem~\ref{thm:Fells absorption principle I} and the definition of the norms $\|\ \|_{tH\tmu}^\cB.$
Assume conditions (a) and (b) are satisfied.
Take a faithful *-representation $\pi\colon C\to \bB(W)$ and notice that $V\otimes_\pi 1$ weakly contains $\kappa_H$ because for all $f\in L^1(H),$
\begin{equation*}
 \|\intform{\kappa_H}_f\| = |\int_H f(t)\, dt|\leq \|\intform{V}_f\|=\|\intform{V}_f\otimes_\pi 1 \|=\|(V\otimes_\pi 1)^{\intform{\ }}_f\|.
\end{equation*}

Now take a faithful *-representation $\rho\colon A\to \bB(M);$ form the Hilbert space 
\begin{equation*}
N:=(X_A\otimes Z_C)\otimes_{\pi\otimes \rho}(W\otimes M)=(X_A\otimes_\pi W)\otimes(Z_C\otimes_\rho M)
\end{equation*}
and consider the unique *-homomorphism 
\begin{equation*}
  \Omega\colon \bB( X_A\otimes Z_C )\to \bB(N)
\end{equation*}
such that $\Omega( j\otimes k )=(j\otimes_\pi 1)\otimes(k\otimes_\rho 1)=(j\otimes k)\otimes_{\pi\otimes \rho}1,$ for all $j\in \bB(X_A)$ and $k\in \bB(Z_C).$
By Remark~\ref{rem:induction and balanced tensor product}
\begin{equation*}
\Omega\circ (T\otimes \Ind_H^ G(V))
=(T\otimes_\pi 1)\otimes (\Ind_H^G(V)\otimes_\rho 1)
=(T\otimes_\pi 1)\otimes \Ind_H^G(V\otimes_\rho 1)
\end{equation*}
and, consequently, for all $f\in L^1(\cB)$
\begin{equation*}
\| ((T\otimes_\pi 1)\intform{\otimes }\Ind_H^G(V\otimes_\rho 1))_f \|=\|\Omega\left((T\intform{\otimes}\Ind_H^G(V))_f\right) \|\leq\| (T\intform{\otimes}\Ind_H^G(V))_f\|.
\end{equation*}

Using Remark~\ref{rem:induction and balanced tensor product} once again we get that the integrated form of $(T\otimes_\pi 1)|_{\cB_{tHt^{-1}}} = (T|_{\cB_{tHt^{-1}}})\otimes_\pi 1$ is $(T|_{\cB_{tHt^{-1}}})^{\intform{\ }}\otimes_\pi 1,$  which is a faithful *-representation of $C^*(\cB_{tHt^{-1}}),$ for all $t\in G.$ 
Then, by replacing $T$ and $V$ with $T\otimes_\rho 1$ and $V\otimes_\pi 1$ respectively, we can assume: $X=X_A$ and $Z=Z_C$ are Hilbert spaces and $V$ weakly contains $\kappa_H.$

By Proposition~\ref{prop:induction increases norm}, condition (b) implies $\|\Ind_{tHt^{-1}}^\cB(T|_{\cB_{tHt^{-1}}})_f\|=\|f\|_{tHt^{-1}}^\cB$ for all $f\in \cB_{tHt^{-1}}$ and $t\in G.$
Since $\kappa_{tHt^{-1}}$ is weakly contained in the conjugated representation ${}_tV\colon tHt^{-1}\to \bB(Z);$ for all $f\in L^1(\cB_{tHt^{-1}})$ and $t\in G$ we obtain
\begin{equation*}
 \|f\|_{tHt^{-1}}^\cB = \|\Ind_{tHt^{-1}}^\cB(T|_{\cB_{tHt^{-1}}})_f\| \leq \|\Ind_{tHt^{-1}}^\cB({}_tV\otimes T|_{\cB_{tHt^{-1}}})_f\|.
\end{equation*}
Thus, for all $f\in L^1(\cB),$
\begin{equation*}
	\sup_{t\in G}\|f\|_{tHt^{-1}}^\cB 
	\leq \sup_{t\in G} \|\Ind_{tHt^{-1}}^\cB({}_tV\otimes T|_{\cB_{tHt^{-1}}})_f\|
	= \| (\Ind_H^G(V)\intform{\otimes}T)_f \|.
\end{equation*}
and the thesis follows from Theorem~\ref{thm:Fells absorption principle I}.
\end{proof}

We have some remarks regarding condition (b) of the corollary above.
Firstly, it follows from the proof that if (b) is fulfilled by a *-representation on a Hilbert module, then it is fulfilled by a *-representation on a Hilbert space.
Secondly, suppose that for every $t\in H$ we can get a *-representation $S^t$ of $\cB$ on a Hilbert space such that $S^t|_{\cB_{tHt^{-1}}}$ weakly contains any other *-representation of $\cB_{tHt^{-1}}.$
Then the direct sum of $\{S^t\}_{t\in G}$ satisfies condition (b).
Finally, if there is one *-representation $T$ as in (b), then any $S\colon \cB\to \bB(Y)$ with with faithful and non degenerate integrated form also satisfies (b).
Indeed, since $S$ weakly contains $T,$ $S|_{tHt^{-1}}$ weakly contains $T|_{tHt^{-1}}$ for all $t\in G$ (see Theorem~\ref{thm:foundamental facts on weak containment}).

The preceding considerations motivate the following question:
given a subgroup $K \sbgp G$ and a non degenerate *-representation $S\colon \cB_K\to \bB(X),$ is $S$ weakly contained in the restriction $T|_{\cB_K}$ of  some *-representation $T$ of $\cB$?

A natural candidate for $T$ is $\Ind_K^\cB(S).$
In case $\cB$ is the trivial bundle over $G$ and $S$ is the trivial representation $\kappa_K\colon K\to \bC,$ we are asking if $\kappa_K\preceq \Ind_K^G(\kappa_K)|_K.$
This problem was studied by Derighetti in \cite{Derighetti}, where he found conditions that guarantee $\kappa_K\preceq \Ind_K^G(\kappa_K)|_K.$
For example, this is the case if $K$ is normal or open in $G,$ or if the normaliser of $K$ is open in $G.$
We will start Section~\ref{sec:weak containment with respect to subgroups} by studying this problem for *-representations of Fell bundles (see, for example,  Lemma~\ref{lem:normal or open implies strong Derighetti condition}).

If $H$ is normal in $G,$ then the bound \eqref{equ:bound form FExel absorption I} becomes
\begin{equation*}
 \| ( T\otimes\Ind_H^G(V))^{\intform{\ }}_f \|\leq \| f \|_H^\cB.
\end{equation*}
Quite interestingly, Fell's Absorption Principle implies the same bound holds if instead of assuming $H$ is normal in $G$ we suppose $\cB$ is saturated.
The Proposition below explains this phenomena.

\begin{proposition}\label{prop:condition for CHB=CtHtmuB}
 Let $\cB=\{B_t\}_{t\in G}$ be a Fell bundle, $H \sbgp  G,$ fix $s\in G$ and set $K:=sH\smu.$
 Consider the Haar measure on $K,$ $d_Kt,$ such that $d_K(t) = d_H (\smu ts)$ and assume that at least one of the following conditions holds:
 \begin{enumerate}
  \item $H$ is normal in $G.$
  \item $\cB$ is saturated.
  \item $\cB$ has a unitary multiplier of order $s$ (see \cite[VIII 3.9]{FlDr88}).
 \end{enumerate}
 Then $\|f\|_H^\cB=\|f\|_K^\cB$ for all $f\in L^1(\cB)$ or, equivalently, there exists a C*-isomorphism $\Omega\colon C^*_K(\cB)\to C^*_H(\cB)$ extending the inclusion $C_c(\cB)\subset C^*_H(\cB).$
 \begin{proof}
  The thesis follows trivially if $H$ is normal.
  Assume $\cB$ is saturated, in which case we adopt Fell's construction and notation for  conjugated representations~\cite[XI 16]{FlDr88}.
  Let $T\colon \cB_H\to \bB(Y)$ be a non degenerate *-representation such that $\|\intform{\Ind}_H^\cB(T)_f\|=\|f\|_H^\cB$ for all $f \in L^1(\cB).$
  Define ${}^sT\colon \cB_K\to \bB(Z)$ as the conjugated *-representation  and recall from \cite[XI 16.19]{FlDr88} that $\Ind_K^\cB({}^sT)$ is unitary equivalent to $\Ind_H^\cB(T).$
  Thus, for all $f\in L^1(\cB),$ 
  \begin{equation*}
   \|f\|_H^\cB 
     = \|\intform{\Ind}_H^\cB(T)_f\|
     = \|\intform{\Ind}_K^\cB({}^sT)_f\|
     \leq \|f\|_K^\cB.
  \end{equation*}
  By symmetry we get that $\|f\|_H^\cB=\|f\|_K^\cB.$
  
  Now assume $\cB$ has a unitary multiplier $u$ of order $s.$
  In this situation we may proceed as in \cite[XI 16.16]{FlDr88}, we repeat the construction to show the reader the saturation hypothesis is not really needed. 
  Let $T\colon \cB_H\to \bB(Y)$ be a non degenerate *-representation with faithful integrated form.
  Define ${}^uT\colon \cB_K\to \bB(Y)$ by ${}^uT_b:=T_{u^* bu}$ and note that ${}^{u^*}({}^uT)=T.$
  Set $S:={}^uT$ and, given $f\in C_c(\cB),$ define $[fu]\in C_c(\cB)$ by $[fu](t)=\Delta_G(s)^{-1/2}f(t\smu)u.$
  
  For all $f,g\in C_c(\cB)$ and $\xi,\eta\in Y:$
  \begin{multline*}
   \langle [fu]\otimes_T \xi,[gu]\otimes_T\eta\rangle 
      =\langle \xi,\intform{T}_{p^G_H( [fu]^**[gu] )}\eta\rangle =\\
   = \int_H\int_G \Delta_G(t)^{1/2}\Delta_H(t)^{-1/2} \langle \xi, T_{[fu](r)^*[gu](rt)} \eta\rangle \, d_Grd_Ht \\
   = \int_H\int_G \Delta_G(s)^{-1} \Delta_G(t)^{1/2}\Delta_H(t)^{-1/2} \langle \xi, T_{u^* f(r\smu )^*g(rt\smu)u} \eta\rangle \, d_Grd_Ht\\
        = \int_K\int_G \Delta_G(t)^{1/2}\Delta_K(t)^{-1/2} \langle \xi, S_{f(r)^*g(rt)} \eta\rangle \, d_Grd_Kt
        = \langle f\otimes_S\xi,g\otimes_S\eta\rangle.
  \end{multline*}
  Thus there exists a unique unitary operator $U\colon L^2_K(\cB)\otimes_S Y\to L^2_H(\cB)\otimes_TY$ such that $U(f\otimes_S\xi)=[fu]\otimes_T\xi.$

  Given $t\in G,$ $b\in B_t,$ $f\in C_c(\cB)$ and $\xi\in Y$ we have
  \begin{equation*}
   (b[fu])(r)=b[fu](\tmu r) = \Delta_G(s)^{-1/2} b f(\tmu r\smu )u = [(bf) u ](r),
  \end{equation*}
  for all $r\in G.$
  Thus,
  \begin{align*}
   U^* \Ind_H^\cB(T)_b U(f\otimes_S\xi)
    = U^* (b[fu]\otimes_T \xi)
    = U^* ( [(bf) u] \otimes_T\xi)
     = \Ind_K^\cB(S)_{b}(f\otimes_S\xi);
  \end{align*}
  implying that $U$ intertwines $\Ind_H^\cB(T)$ and $\Ind_K^\cB(S)$ (and their integrated forms).
  
  Our choice of $T$ guarantees that for all $f\in L^1(\cB)$
  \begin{equation*}
   \|f\|_H^\cB = \| \intform{\Ind}_H^\cB(T)_f \| = \| \intform{\Ind}_K^\cB(S)_f \|\leq \|f\|_K^\cB.
  \end{equation*}
  Since $u^*$ is a unitary multiplier of order $\smu$ and $\smu Ks=H,$ by symmetry we obtain $\|f\|_H^\cB=\|f\|_K^\cB.$
  \end{proof}
\end{proposition}

The Corollary below, which is a general version of \cite[Corollary 2.15]{ExNg}, follows immediately from the last Proposition above, Remark~\ref{rem:induced and faithful integrated form} and Theorem~\ref{thm:Fells absorption principle I}.

\begin{corollary}\label{cor:Fell absorption principle II bis}
	Consider a Fell bundle $\cB=\{B_t\}_{t\in G}$ and a subgroup $H \sbgp G$ such that at least one of the following conditions holds:
\begin{enumerate}
	\item $H$ is normal.
	\item $\cB$ is saturated.
	\item $\cB$ has enough unitary multipliers (in the sense of \cite[VIII 3.8]{FlDr88}).
\end{enumerate}

Then, given non degenerate *-representations $T\colon \cB\to \bB(X_A)$ and $V\colon H\to \bB(Z_C),$ it follows that 
\begin{equation}\label{equ:bound form FExel absorption II}
\| (T\intform{\otimes}\Ind_H^G(V) )_f \|\leq \| f \|_H^\cB.
\end{equation}	
 for all $f\in L^1(\cB).$
Equivalently, there is a *-representation $\pi\colon C^*_H(\cB)\to \bB((L^2_H(G)\otimes_V Z_C)\otimes X_A)$ such that $\pi\circ q^\cB_H=\Ind_H^G(V)\intform{\otimes} T.$
If we regard $C^*_H(\cB)$ as the image of $\intregrep{H\cB},$ then $\overline{\pi}\circ \regrep{H\cB}=T\otimes\Ind_H^G(V) .$

In case $(T|_{\cB_H})\intform{\otimes} V$ is faithful \eqref{equ:bound form FExel absorption II} becomes an equality and $\pi$ is faithful.
In particular, if $\kappa_H\colon H\to \bC$ is the trivial representation and the integrated form of $T|_{\cB_H}$ is faithful, then $\|(T\intform{\otimes}\Ind_H^G(\kappa_H))_f\|=\|f\|^\cB_H$ for all $f\in L^1(\cB).$
\end{corollary}

In case $H$ is the trivial subgroup $\{e\}$ the three claims in the corollary below are different ways of saying that $G$ is amenable.

\begin{corollary}
 Let $G$ be a LCH group and $H \sbgp G.$
 For the claims
 \begin{enumerate}
  \item The trivial representation $\kappa_G\colon G\to \bC$ is weakly contained in $\Ind_H^G(\kappa_H).$
  \item There exists a unitary representation $U\colon H\to \bB(X)$ such that $\kappa_G$ is weakly contained in $\Ind_H^G(U).$
  \item $G$ has the $H-$WCP.
 \end{enumerate}
 the implications $(1)\Rightarrow (2)\Leftrightarrow (3)$ hold.
 \begin{proof}
  In this proof we make no difference between $G$ and $\cT_G.$ 
  The implication (1)$\Rightarrow$(2) is trivial and (3)$\Rightarrow$(2) is a direct consequence of the definition of $H-$WCP.
  Assume (2) holds and take a representation $U\colon G\to \bB(X)$ with faithful integrated form.
  Then $\kappa_G\otimes U$ is unitary equivalent to $U$ and, by \cite[VIII 21.24]{FlDr88} and the hypothesis, it is weakly contained in $\Ind_H^G(V)\otimes U$ for some unitary representation $V\colon H\to \bB(Y).$
  Now we can use Corollary~\ref{cor:Fell absorption principle II bis} to deduce that for all $f\in L^1(G)$
  \begin{equation*}
   \|f\|_{C^*(G)} 
       = \|\intform{U}_f\| 
       = \|(\kappa_G\intform{\otimes }U)_f \|
       \leq \| (\Ind_H^G(V)\intform{\otimes } U)_f \|
       \leq \|f\|_{C^*_H(G)}
       \leq \|f\|_{C^*(G)}.
  \end{equation*}
  Hence $q^G_H$ is a faithful quotient map, i.e. a C*-isomorphism.
 \end{proof}
\end{corollary}

In \cite{ExNg} the authors use an approximation property to show that every Fell bundle over an amenable group has the WCP.
This is a particular case of the following.

\begin{corollary}\label{cor:icomplete answer to question HWC of group passes to bundle}
  Let $\cB=\{B_t\}_{t\in G}$ be a Fell bundle.
  If $H\sbgp G$ is such that $G$ has the $H-$WCP and at least one of the conditions enumerated in Corollary~\ref{cor:Fell absorption principle II bis} is satisfied, then $\cB$ has the $H-$WCP.
\begin{proof}
  	Let $T\colon \cB\to \bB(X)$ and $U\colon \bB(Z)$ be a non degenerate *-representation such that the integrated form of $T$ is faithful and $\kappa_G\preceq \Ind_H^G(U).$
  	By \cite[VIII 21.24]{FlDr88}, $T=\kappa_G\otimes T$ is weakly contained in $\Ind_H^G(U)\otimes T.$
  	Using Corollary~\ref{cor:Fell absorption principle II bis} we get that for all $f\in C^*(\cB),$
  	\begin{equation*}
  		\|f\|
  		   = \|\intform{T}_f\|
  		    \leq \| (\Ind_H^G(U)\otimes T)^{\intform{\ }}_f\|\leq \|q^\cB_H(f)\|
  		    \leq \|f\|;
  	\end{equation*}
  	which implies that $q^\cB_H\colon C^*(\cB)\to C^*_H(\cB)$ is a C*-isomorphism.
  \end{proof}
\end{corollary}

\section{Weak containment with respect to a pair of subgroups}\label{sec:weak containment with respect to subgroups}

We now resume the discussion left after Corollary~\ref{cor:isomorphisms of q maps} and try to find conditions under which the $HK-$WCP of $\cB$ implies the $H-$WCP of $\cB_K$ ($H \sbgp K \sbgp G$).
To do this we study the process $T\mapsto \Ind_H^\cB(T)|_{\cB_K},$ which we may apply to any *-representation $T$ of $\cB_H$ and can be decomposed (up to unitary equivalence) into two steps: 
\begin{align*}
T & \mapsto  \Ind_H^{\cB_K}(T) &  S&\mapsto \Ind_K^\cB(S)|_{\cB_K}
\end{align*}

Assume that $\cB$ has the $HK-$WCP and that we are lucky enough as to have $R\sim \Ind_K^\cB(R)|_{\cB_K}$ for every non degenerate *-representation $R$ of $\cB_K.$
Take one such $R.$
Since $\cB$ has the $HK-$WCP, there is a non degenerate *-representation $S$ of $\cB_H$ such that $\Ind_K^\cB(R)\preceq \Ind_H^\cB(S)=\Ind_K^\cB(\Ind_H^{\cB_K}(S)).$
By restricting to $\cB_K$ we get $R\sim \Ind_K^\cB(R)|_{\cB_K}\preceq \Ind_K^\cB(\Ind_H^{\cB_K}(S))|_{\cB_K}\sim \Ind_H^{\cB_K}(S);$ which implies $R\preceq \Ind_H^{\cB_K}(S).$
Thus $\cB_K$ has the $H-$WCP
.

In \cite{Derighetti} Derighetti shows that for $G=SL(2,\bC),$ $H=SL(2,\bR)$ and the trivial representation $\kappa_H\colon H\to \bC,$ it is not true that $\kappa_H\preceq  \Ind_H^G(\kappa_H)|_{H}.$
So, in general, we can not expect to have the equivalence $R\sim \Ind_K^\cB(R)|_{\cB_K}.$
But Derighetti also notices that $\kappa_H\preceq  \Ind_H^G(\kappa_H)|_{H}$ whenever $H$ is normal, open or has open normaliser in $G.$
This motivates the following.

\begin{lemma}[c.f. {\cite[XI 11.3]{FlDr88}}]\label{lem:normal or open implies strong Derighetti condition}
 Let $\cB=\{B_t\}_{t\in G}$ be a Fell bundle and $H \sbgp G.$
 If either $H$ is normal or open in $G,$ then  for every representation $T\colon \cB_H\to \bB(Y_A)$ it follows that 
 \begin{equation}\label{equ:T is weakly contained in restriction of induced}
  \|\intform{T}_f\|\leq \|[\Ind_H^\cB(T)|_{\cB_H}]^{\intform{\ }}_f\|
 \end{equation}
 for all $f\in C^*(\cB).$
 In case $T$ is non degenerate and $Y_A$ is a Hilbert space, $T\preceq \Ind_H^\cB(T)|_{\cB_H}.$
 \begin{proof}
 Both $\|\intform{T}_f\|$ and $\|[\Ind_H^\cB(T)|_{\cB_H}]^{\intform{\ }}_f\|$ are unaltered if we replace $T$ with its non degenerate part, so we may assume $T$ is non degenerate.
 Take a faithful non degenerate *-representation $\rho\colon A\to \bB(X)$ and recall that $\Ind_H^\cB(T\otimes_\rho 1) = \Ind_H^\cB(T)\otimes_\rho 1.$
 Also, $\|T_f\|=\|(T\otimes_\rho 1)_f\|$ and $\|\Ind_H^\cB(T)_f\|=\|\Ind_H^\cB(T\otimes_\rho 1)_f\|.$
 Thus we may assume, without loss of generality, that $T$ is non degenerate and $Y_A=Y$ is a Hilbert space.

 In case $H$ is open in $G,$ by 
  \cite[XI 14.21]{FlDr88} $T$ is a subrepresentation of $\Ind_H^\cB(T)|_{\cB_H}.$ Thus \eqref{equ:T is weakly contained in restriction of induced} clearly holds for all $f\in C^*(\cB).$
 
 Now suppose $H$ is normal in $G$ and let $\cC=\{C_\alpha\}_{\alpha\in G/H}$ be the partial cross sectional bundle over $G/H$ derived from $\cB,$ as defined in \cite[VIII 6]{FlDr88}.
 Recall that for each $tH\in G/H$ the fibre $C_{tH}$ is a completion of $C_c(\cB_{tH})$  with $C_{H}=L^1(\cB_H).$
 By \cite[VIII]{FlDr88}, there exists a *-representation $S\colon \cC\to \bB(L^2_H(\cB)\otimes_{\intform{T}}Y)$ such that $S_f\xi = \int_H \Ind_H^\cB(T)_{f(tz)}\xi\, dz$ for all $f\in C_c(\cB_{tH}),$ $\xi\in L^2_H(\cB)\otimes_{\intform{T}}Y$ and $tH\in G/H.$
 In particular, $S|_{C_H}$ is the restriction to $L^1(\cB_H)$ of (the integrated form of) $\Ind_H^\cB(T)|_{\cB_H}.$
 
  Theorem~\ref{thm:characterization of positive representations} implies $T$ is $\cB-$positive and the restriction $R\colon C_H\to \bB(Y)$ of $\intformexpl{T}$ to $L^1(\cB_H)\equiv C_H$ is $\cC-$positive by \cite[XI 12.7]{FlDr88}.
 Then we can form the induced representation $\Ind_{\{H\}}^\cC(R)\colon \cC\to \bB(L^2_{\{H\}}(\cC)\otimes_R Y).$
 But \cite[XI 12.7]{FlDr88} also implies that $\Ind_{\{H\}}^\cC(R)$ is unitary equivalent to $S.$
 By \cite[XI 11.3]{FlDr88}, $\intform{T}|_{L^1(\cB)}\equiv R\preceq \Ind_{\{H\}}^\cC(R)|_{C_H}\approx S|_{C_H}=\Ind_H^\cB(T)|_{L^1(\cB)}\Rightarrow T\preceq \Ind_H^\cB(T)|_{\cB_H}.$
 \end{proof}
\end{lemma}

We must point out that a previous version of the present article claimed to prove the lemma above for arbitrary $H\sbgp G$ (which is wrong by Derighetti's counter example).
Quite interestingly, the main conclusions remain unchanged because in all the situations where the false lemma was used there were additional assumptions on the subgroups that implied the thesis of the lemma was true.

\begin{definition}\label{defi:strong Derighetti}
 A Fell bundle $\cB=\{B_t\}_{t\in G}$ satisfies
 \begin{itemize}
 	\item Derighetti's \textit{weak} condition with respect to $H \sbgp G$ if for every non degenerate *-representation $T\colon \cB_H\to \bB(Y)$ there exists a *-representation $S\colon \cB_H\to \bB(Y)$ such that $T\preceq \Ind_H^\cB(S)|_{\cB_H}.$
 	\item Derighetti's \textit{strong} condition with respect to $H \sbgp G$ if every non degenerate *-representation $T\colon \cB_H\to \bB(Y)$ is weakly contained in $\Ind_H^\cB(T)|_{\cB_H}.$
 \end{itemize}  
\end{definition}

Lemma~\ref{lem:normal or open implies strong Derighetti condition} can be rephrased in terms of Derighetti's strong condition and it can be extended to cover other situations, as we shall see after the following.

\begin{proposition}\label{prop:sequence of subgroups and WC}
	Let $\cB=\{B_t\}_{t\in G}$ be a Fell bundle, consider subgroups $H=H_0 \sbgp H_1 \sbgp \cdots  \sbgp H_n=G$ and, given a non degenerate *-representation $T\colon \cB_H\to \bB(Y),$ define $T^{(0)}:=T$ and $T^{(k+1)}:=\Ind_{H_k}^{\cB_{H_{k+1}}}(T^{(k)})$ for $k=0,\ldots,n-1.$
	If $T^{(k)}\preceq  T^{(k+1)}|_{\cB_{H_k}}$ for all $k=0,\ldots,n-1,$ then $T\preceq \Ind_H^\cB(T)|_{\cB_H}.$
	\begin{proof}
		We proceed by induction in $n.$
		For $n=0,1$ the claim is trivial.
		Suppose the statement holds whenever we have chain of, at mots, $n\geq 1$ subgroups of $G.$
		Lets say we have $n+1$ closed subgroups of $G,$ $H=H_0 \sbgp \cdots  \sbgp H_{n+1} = G.$
		By hyphotesys, $T\equiv T^{(0)}$ is weakly contained in $T^{(1)}|_{\cB_{H_1}}$ and the induction hyphotesys implies $T^{(1)}$ is weakly contained in $\Ind_{H_1}^{\cB}(T^{(1)})|_{\cB_{H_1}}.$
		
		Since the restriction operation is continuous with respect to the regional topology \cite[VIII 21.20]{FlDr88} and preserves unitary equivalence classes, $T$ is weakly contained in 
		\begin{equation*}
		\Ind_{H_1}^{\cB}(T^{(1)})|_{\cB_{H_1}}|_{\cB_H} = \Ind_{H_1}^{\cB}(\Ind_H^{\cB_{H_1}}(T))|_{\cB_{H}} = \Ind_H^{\cB}(T))|_{\cB_{H}},
		\end{equation*}
		which gives the desired result.
	\end{proof}
\end{proposition}

\begin{remark}
	A Fell bundle $\cB=\{B_t\}_{t\in G}$ satisfies Derighetti's strong condition with respect to $H\sbgp G$ if $H$ is normal or open in $G$ (Lemma~\ref{lem:normal or open implies strong Derighetti condition}) and also if its normalizer is open in $G$ (Lemma~\ref{lem:normal or open implies strong Derighetti condition} and Proposition~\ref{prop:sequence of subgroups and WC}).
\end{remark}

We won't make any effort in determining if whether or not the amenability of $H\sbgp G$ implies every Fell bundle $\cB$ over $G$ satisfies Derighetti's (strong or weak) conditions.
The reason being that we want to use the conditions to answer Question~\ref{q:HK wcp of B implies H wcp of BK} and, if we know $H$ is amenable, then the problem reduces to the case  $H=\{e\}.$
Indeed, suppose we have a Fell bundle $\cB$ over $G$ and subgroups $H\sbgp K\sbgp G$ with $H$ amenable and $\cB$ having the $HK-$WCP.
By Corollary~\ref{cor:inner amenability and nuclearity}, $\cB_H$ has the WCP and, by induction in stages, $\cB$ has the $HK-$WCP $\Leftrightarrow$ $\cB$ has the $K-$WCP.
Also, $\cB_K$ has the $H$-WCP $\Leftrightarrow$ $\cB_K$ has the WCP.
Thus, if $H$ is amenable, our problem reduces to the case $H=\{e\}.$
This is very convenient because $C^*(\cB_H)$ and $C^*_H(\cB)$ become $B_e$ and $C^*_\red(\cB),$ respectively.

\begin{proposition}
 Let $\cB$ be a Fell bundle over $G,$ consider subgroups  $H \sbgp K \sbgp G$ and define $\regrep{K\cB|H}:=\regrep{K\cB}|_{\cB_H}\colon \cB_H\to \bB(L^2_K(\cB)).$
 If $\cB$ satisfies Derighetti's weak condition with respect to $H,$ then the integrated form of $\regrep{K\cB|H}$ is faithful.
 This gives an inclusion $C^*(\cB_H)\subset \bB(C^*_K(\cB))\subset \bB(L^2_H(\cB)).$
 \begin{proof}
  We consider the ``worst'' case first: $K=H.$ 
  Take a non degenerate *-representation $T\colon \cB_H\to \bB(Y)$ with faithful integrated form and let $S\colon \cB_H\to \bB(Z)$ be a *-representation such that $T\preceq \Ind_H^\cB(S)|_{\cB_H}.$
  Then $\Ind_H^\cB(S)|_{\cB_H}$ is  $\regrep{H\cB|H}\otimes_{\intform{S}} 1\colon \cB_H\to \bB(L^2_K(\cB)\otimes_{\intform{T}}Y)$ and for all $f\in C_c(\cB_H)$
  \begin{equation*}
   \|f\|_{C^*(\cB_H)} =\|\intform{T}_f\| \leq \|[\Ind_H^\cB(S)|_{\cB_H}]^{\intform{\ }}_f\| = \|\intregrep{H\cB|H}_f\otimes_{\intform{T}} 1\| = \|\intregrep{H\cB|H}_f\|\leq \|f\|_{C^*(\cB_H)}.
  \end{equation*}
  Thus the integrated form of  $\regrep{H\cB|H}$ is a faithful.
  
  For the general case we consider the canonical quotient map $q^\cB_{KH}\colon C^*_K(\cB)\to C^*_H(\cB)$ and the *-representations $\regrep{K\cB}\colon \cB\to \bB(C^*_K(\cB))\subset \bB(L^2_K(\cB))$ and $\regrep{H\cB}\colon \cB\to \bB(C^*_H(\cB)), $ whose integrated forms are $q^\cB_K$ and $q^\cB_H,$ respectively.
  If $\overline{q^\cB_{KH}}\colon \bB(C^*_K(\cB))\to \bB(C^*_H(\cB))$ is the natural extension of $q^\cB_{KH},$ then $\overline{q^\cB_{KH}}\circ \regrep{K\cB|H}= \regrep{H\cB|H}$ (for integrated and non integrated forms).
  The the integrated form of $\intregrep{K\cB|H}$ is faithful (because that of $\intregrep{H\cB|H}$ is).
 \end{proof}
\end{proposition}

Probably the following lemma is well known by  specialist, we include its proof because we were not able to find a reference.
The closer result we found is the Proposition in \cite[XI 16.27]{FlDr88}, where $G/H$ is assumed to be discrete.

\begin{lemma}\label{lem:induction and restriction of unitary rep to normal subgroups}
  Given normal subgroups of a group $G,$ $H\sbgp K\sbgp G,$ and a unitary representation $U\colon H\to \bB(X)$ it follows that $\Ind_H^G(U)|_K\sim \Ind_H^K(U).$
\end{lemma}
\begin{proof}
  We think of $G$ as the trivial bundle over $G$ with constant fibre $\bC.$
  By Lemma~\ref{lem:normal or open implies strong Derighetti condition} and induction in stages $\Ind_H^K(U)\preceq \Ind_K^G(\Ind_H^K(U))|_K = \Ind_H^G(U)|_K.$
  Thus it suffices to show that $\| (\Ind_H^G(U)|_K)_f \|\leq \| \Ind_H^K(U)_f \|$ for all $f\in C_c(K),$ which we do by using part or the proof of Theorem~\ref{thm:Fells absorption principle I}.
  
  The r\^oles of $T$ and $V$ will be played by the trivial representation $\kappa_G\colon G\to \bC$ and $U,$ respectively.
  Using the group $K$ instead of $G$ in~\eqref{equ:map L} we get a linear map 
  \begin{equation*}
L\colon C_c(K,\ell^2(K)\otimes X)\to \ell^2(K)\otimes (L^2_H(K)\otimes_U X)
  \end{equation*}
  which is continuous in the inductive limit topology and satisfies~\eqref{equ:defining condition for L}.
  
  Given $f\otimes \xi \in C_c(K)\odot X\subset C_c(K,X)$ we have $L(f\odot (\delta_e\otimes \xi))=\delta_e\otimes (f\otimes_U \xi),$ so there exists a unique linear map $L_e\colon C_c(K,X)\to L^2_H(K)\otimes_UX$ which is continuous in the inductive limit topology and $L_e(f\odot \xi)=f\otimes_U\xi.$
  
  Some straightforward computations show that the equality 
  \begin{equation}\label{equ:property of Le}
   \langle L_e(u),\Ind_H^K(U)_t L_e(v)\rangle 
   =  \int_H \int_K \langle u(p),U_s v(\tmu ps)\rangle\, dpds
   \end{equation}
   holds whenever $u$ and $v$ are elementary tensors of the form $f\odot \xi\in C_c(K)\odot X.$
   Besides, fixing $u$ or $v$ both sides of~\eqref{equ:property of Le} define linear or conjugate linear functions (of $v$ or $u,$ respectively) that are continuous in the inductive limit topology, so~\eqref{equ:property of Le} holds for all $u,v\in C_c(K,X).$
  
   Since $K$ is normal in $G$ we have $\Delta_G(s)=\Delta_K(s)$ for all $s\in K,$ in particular this holds for all $s\in H.$
   Besides, $H$ is normal in $G$ and $K,$ so $\Delta_G(s)=\Delta_K(s)=\Delta_H(s)$ for all $s\in H.$
   Let $\Gamma_{GK}\colon G\to (0,+\infty)$ be the function such that for all $a\in C_c(K)$ and $r\in G,$ $\int_K a(tr\tmu)\, d_K r = \Gamma_{GK}(t)\int_Ka(r)\, d_Kr$ (see \cite[III 8.3]{FlDr88}).
   We define $\Gamma_{GH}$ analogously, viewing $H$ as a normal subgroup of $G.$
   We consider the left invariant Haar measures of $G,K$ and $G/K$ have been chosen so that
   \begin{equation*}
   \int_G a(r)\, d_Gr = \int_{G/K} \int_K a(rt)\, d_Kt\,  d_{G/K} (rK),\ \forall\ a\in C_c(G).
   \end{equation*}
  
  Fix $f\in C_c(K),$ $g_i\in C_c(G)$ and $\xi_i\in X$ for $i=1,\ldots, n.$
  Set $\zeta:=\sum_{i=1}^n  g_i\otimes_U \xi_i.$
  It suffices to show that $\| (\Ind_H^G(U)|_K)_f \zeta \|^2\leq \|\Ind_H^K(U)_f\|^2\|\zeta\|^2$ and to do this we start by computing 
  \begin{multline*}
    \| (\Ind_H^G(U)|_K)_f \zeta\|^2
     = \int_K\sum_{i,j= 1}^n \langle g_i\otimes_U \xi_i, \Ind_H^G(U)_t (g_j\otimes_U \xi_j)\rangle f^**f(t)\, dt=\\
     = \int_K\sum_{i,j= 1}^n \langle \xi_i,\intform{U}_{(g_i^* *tg_j)|_H}\xi_j\rangle f^**f(t)\, dt=
     \int_K \int_H \sum_{i,j= 1}^n \langle \xi_i,U_s \xi_j\rangle (g_i^* *tg_j)(s) f^**f(t)\, dsdt=\\
     = \int_K \int_H \int_G \sum_{i,j= 1}^n \langle \xi_i,U_s \xi_j\rangle \overline{g_i(r)}g_j(\tmu r s) f^**f(t)\, drdsdt
     =\\
     = \int_{G/K}\int_K \int_K \int_H  \sum_{i,j= 1}^n \langle \xi_i,U_s \xi_j\rangle \overline{g_i(rp)}g_j(\tmu rp s) f^**f(t)\,  dsdtdp d(rK)=\\
     = \int_{G/K}\Gamma_{GK}(r)\Gamma_{GH}(r)\int_K \int_K \int_H  \sum_{i,j= 1}^n \langle g_i(pr) \xi_i,U_{\rmu sr} g_j(\tmu psr)\xi_j\rangle  f^**f(t)\,  dsdtdp d(rK).
  \end{multline*}
  
  Lets analyze the inner triple integral, $I(rK),$ above.
  Fix $r\in G$ and define $h_r\in C_c(K,X)$ by $h_r(p):=\sum_{i=1}^n g_i(pr)\xi_i.$
  Notice $s\mapsto U_{\rmu sr}$ is a unitary representation ${}_rU$ of $H$ and, by \eqref{equ:property of Le},
  \begin{align*}
  I(rK)= \langle L_e(h_r),\Ind_H^K({}_rU)_{f^**f}L_e(h_r)\rangle.
  \end{align*}
  We know from \cite[XI 12.21]{FlDr88} that $\Ind_H^K({}_rU)$ is unitary equivalent to $\Ind_H^K(U),$ thus 
  \begin{equation*}
  	I(rK)\leq \| \Ind_H^K(U)_f  \|^2\langle L_e(h_r),L_e(h_r)\rangle
  	=\| \Ind_H^K(U)_f  \|^2 \int_K \int_H  \sum_{i,j= 1}^n \langle g_i(pr) \xi_i,U_s g_j( psr)\xi_j\rangle  \,  dsdp
  \end{equation*}
  and we get
  \begin{multline*}
\| (\Ind_H^G(U)|_K)_f \zeta\|^2
= \int_{G/K} \Gamma_{GK}(r)\Gamma_{GH}(r) I(rK)\, drK\leq  \\
\leq \| \Ind_H^K(U)_f  \|^2\int_{G/K}\int_K \int_H \Gamma_{GK}(r)\Gamma_{GH}(r)  \sum_{i,j= 1}^n \langle g_i(pr) \xi_i,U_s g_j( psr)\xi_j\rangle  \,  dsdp d(rK) \\
\leq \| \Ind_H^K(U)_f  \|^2\int_{G/K}\int_K \int_H   \sum_{i,j= 1}^n \langle g_i(rp) \xi_i, g_j(rps)\xi_j\rangle  \,  dsdp d(rK)\\
\leq \| \Ind_H^K(U)_f  \|^2\int_{G}\int_H   \sum_{i,j= 1}^n \langle g_i(r) \xi_i, U_sg_j(rs)\xi_j\rangle  \,  dsdr
=  \| \Ind_H^K(U)_f\|^2  \int_H   \sum_{i,j= 1}^n \langle \xi_i, U_{g_i^**g_j|_H}\xi_j\rangle  \,  dsdr\\
\leq  \| \Ind_H^K(U)_f\|^2 \|\zeta\|^2.
  \end{multline*}
  Hence $\Ind_H^G(U)|_K\preceq \Ind_H^K(U).$
\end{proof}

\begin{proposition}\label{prop:inducting tensor product and restriction for normal subgroups}
Let $\cB=\{B_t\}_{t\in G}$ be a Fell bundle and take $H \sbgp K\sbgp G$ with both $H$ and $K$ normal in $G.$
Given non degenerate *-representations $T\colon \cB\to \bB(X)$ and $U\colon H\to \bB(Y)$ 
 it follows that 
\begin{equation*}
(T\otimes \Ind_H^G(U))|_{\cB_K}\sim \Ind_H^{\cB_K}(T|_{\cB_H}\otimes U).
\end{equation*} 
\begin{proof}
	By Theorem~\ref{thm:Fells absorption principle I}, $T\otimes \Ind_H^G(U)\sim \{ \Ind_H^\cB( T|_{\cB_H}\otimes {}_t U) \}_{t\in G}$ and we claim  $ \{ \Ind_H^\cB( T|_{\cB_H}\otimes {}_t U) \}_{t\in G}\sim \Ind_H^\cB(T|_{\cB_H}\otimes U).$
	In fact $\Ind_H^\cB( T|_{\cB_H}\otimes {}_t U)$ is weakly equivalent to $\Ind_H^\cB( T|_{\cB_H}\otimes U)$ for every $t\in G.$
	To prove this we fix $t\in G$ and use Lemma~\ref{lem:induction and restriction of unitary rep to normal subgroups} to get the equivalence ${}_t U\sim \Ind_H^G({}_t  U)|_H.$
	But we know from \cite[XI 12.21]{FlDr88} that  $\Ind_H^G({}_t  U)\sim \Ind_H^G(U),$ so ${}_t U\sim \Ind_H^G({}_t  U)|_H\sim \Ind_H^G(U)|_H\sim U.$
	Combining \cite[VIII 21.14]{FlDr88} with \cite[XI 12.4]{FlDr88} and recalling that the definition of weak equivalence of *-representations is given in terms of convergence with respect to the regional topology, we get that $(T|_{\cB_H})\otimes {}_t  U\sim (T|_{\cB_H})\otimes U$ and, consequently, $\Ind_H^\cB(T|_{\cB_H}\otimes U)\sim \Ind_H^\cB( T|_{\cB_H}\otimes {}_t U).$
	Any finite direct sum of members of $\{ \Ind_H^\cB( T|_{\cB_H}\otimes {}_t U) \}_{t\in G}$  is weakly contained in a finite direct sum of copies of $\Ind_H^\cB( T|_{\cB_H}\otimes U) ,$ thus $
      T\otimes \Ind_H^G(U)\sim \{ \Ind_H^\cB( T|_{\cB_H}\otimes {}_t U) \}_{t\in G}\sim \Ind_H^\cB( T|_{\cB_H}\otimes U).$
      
	If we apply the conclusion above to $ (T|_{\cB_K})\otimes \Ind_H^K(U),$ which we can do because $H$ is normal in $K,$ it follows that  $(T|_{\cB_K})\otimes \Ind_H^K(U)\sim \Ind_H^{\cB_K}(T|_{\cB_K}|_{\cB_H}\otimes U)=\Ind_H^{\cB_K}(T|_{\cB_H}\otimes U).$
	Using Lemma~\ref{lem:induction and restriction of unitary rep to normal subgroups} in conjunction with \cite[VIII 21.14]{FlDr88} we get
	\begin{equation*}
(T\otimes \Ind_H^G(U))|_{\cB_K} = (T|_{\cB_K})\otimes (\Ind_H^G(U)|_{K})\sim (T|_{\cB_K})\otimes \Ind_H^K(U).
	\end{equation*} 
	Hence, by transitivity, $(T\otimes \Ind_H^G(U))|_{\cB_K} \sim \Ind_H^{\cB_K}(T|_{\cB_H}\otimes U).$
\end{proof}
\end{proposition}

\begin{theorem}\label{thm:inclusion CHBK in BCHB}
 Let $\cB=\{B_t\}_{t\in G}$ be a Fell bundle that satisfies Derighetti's weak condition with respect to $K \sbgp G.$
 Given $H \sbgp K$ define $\regrep{H\cB|K}:=\regrep{H\cB}|_{\cB_K}\colon \cB_K\to \bB(L^2_H(\cB)).$
 Then, for all $f\in L^1(\cB_K),$ $\|f\|^{\cB_K}_H \leq \| \intregrep{H\cB|K}_f \|.$ 
 The equality holds (for all $f\in L^1(\cB_K)$) if both $H$ and $K$ are normal in $G,$ in which case we may identify $C^*_H(\cB_K)$ with $\intregrep{H\cB|K}(C^*(\cB_K))\subset \bB(C^*_H(\cB)).$
 
 More precisely, each $f\in C_c(\cB_K)\subset C^*_H(\cB_K)$ gets identified with the operator $M_f\in \bB(C^*_H(\cB))$ mapping $g\in C_c(\cB)$ to $f*_Kg\in C_c(\cB),$ with $f*_Kg(r)=\int_K f(t)g(\tmu r)\, dr.$
 Besides, for each $f\in L^1(\cB_K),$ the operator $\intregrep{H\cB|K}_f\in \bB(L^2_H(\cB))$ maps $g\in C_c(\cB)$ to $f*_Kg.$
 \begin{proof}
  Let $R\colon \cB_H\to \bB(X)$ and $S\colon\cB_H\to \bB(Y)$ be non degenerate *-representations such that the integrated form of $R$ is faithful and  $R\preceq \Ind_H^\cB(S)|_{\cB_H}.$
  The induction process is continuous with respect to the regional topology and preserves direct sums, so
  \begin{equation*}
\Ind_H^{\cB_K}(R)\preceq \Ind_H^{\cB_K}(\Ind_H^\cB(S)|_{\cB_H})
=\Ind_H^{\cB_K}(\Ind_H^\cB(S)|_{\cB_K}|_{\cB_H})
\approx \Ind_H^\cB(S)|_{\cB_K}.
  \end{equation*}

  The tensor product $\regrep{H\cB|K}\otimes_{S} 1$ is (unitary equivalent to)  $\Ind_H^\cB(S)|_{\cB_K}$ then, for all $f\in L^1(\cB_K),$
  \begin{equation*}
    \| f \|^{\cB_K}_H = \|\intregrep{H\cB_K}_f\|= \|\intregrep{H\cB_K}_f\otimes_R 1\| = \| \intform{\Ind}_H^{\cB_K}(R)_f  \|\leq \| \intregrep{H\cB|K}_f\otimes_{S} 1 \|
    \leq \|\intregrep{H\cB|K}_f \|.
  \end{equation*}
    
  Assume both $H$ and $K$ are normal in $G,$ in which case $\cB$ satisfies  Derighetti's strong condition with respect to $H.$
  Let $T\colon \cB\to \bB(Z)$ and $U\colon H\to \bB(W)$ be *-representations with faithful integrated form.
  Then $R\preceq \Ind_H^\cB(S)|_{\cB_H}\preceq T|_{\cB_H}$ and $R\preceq T|_{\cB_H}\otimes U$ because $U$ weakly contains the trivial representation of $H.$
  Hence, for every $f\in L^1(\cB_K),$ $\|f\|^{\cB_K}_H = \|\intform{\Ind}_H^{\cB_K}(T|_{\cB_H}\otimes U)_f\|$ and Proposition~\ref{prop:inducting tensor product and restriction for normal subgroups} gives $\|f\|^{\cB_K}_H=\|((T\intform{\otimes} \Ind_H^G(U))|_{\cB_K})_f \|=\|(\Ind_H^\cB(T|_{\cB_H}\otimes U)|_{\cB_K})_f \|.$
  Taking $G=K$ we get that for all $f\in L^1(\cB),$ $\|f\|^{\cB}_H=\|\Ind_H^\cB(T|_{\cB_H}\otimes U)_f \|.$
  
  Since the integrated form of $T|_{\cB_H}\otimes U$ is faithful and $\Ind_H^\cB(T|_{\cB_H}\otimes U)|_{\cB_K}=\regrep{H\cB|K}\otimes_{T|_{\cB_H}\otimes U} 1,$ for all $f\in L^1(\cB_K)$ we have
  \begin{equation*}
    \| f \|^{\cB_K}_H =  \|(\Ind_H^\cB(T|_{\cB_H}\otimes U)|_{\cB_K})_f \| =\| \intregrep{H\cB|K}_f\otimes_{T|_{\cB_H}\otimes U} 1 \| = \|\intregrep{H\cB|_K}_f\|;
  \end{equation*}
  which implies  $\intregrep{H\cB|_K}\colon C^*(\cB_K)\to \bB(L^2_H(\cB))$ factors via $q^{\cB_K}_H$ through a faithful *-representation of $C^*_H(\cB_K).$
  In the rest of the proof we identify  $C^*_H(\cB_K)$ with $\intregrep{H\cB|_K}(C^*(\cB_K))\subset \bB(L^2_H(\cB)).$
  
   Given $f\in C_c(\cB_K)$ and $g\in C_c(\cB)$ define the section $f*_K g\colon G\to \cB$ by $f*_K g(s):=\int_K f(t)g(\tmu s)\, dt.$
   We may proceed as in the proof of Proposition~\ref{prop:rep on inducing module} to show that $f*_K g\in C_c(\cB)$ is the integral of $F\colon K\mapsto C_c(\cB),\ F(t)= f(t)g,$ with respect to the inductive limit topology.
   Thus $f*_K g\in C_c(\cB_K).$
   The inclusion map $\nu\colon C_c(\cB)\to L^2_H(\cB)$  is continuous when considering the inductive limit topology and the norm topology, then $f*_Kg =\nu(f*_Kg)=\int_K \nu(f(t)g)\, dt = \int_K \regrep{H\cB|K}_{f(t)}g\, dt=\intregrep{H\cB|K}_f g.$

   We view $C^*_H(\cB)$ as a non degenerate C*-subalgebra of $\bB(L^2_H(\cB))$ thus, as usual, we make the following identification
   \begin{equation*}
   \bB(C^*_H(\cB))\equiv\{M\in \bB(L^2_H(\cB))\colon MC^*_H(\cB)\cup C^*_H(\cB)M\subset C^*_H(\cB) \}.
   \end{equation*}
   
   Given $f\in C_c(\cB_K)$ and $g,h\in C_c(\cB)$ we have $\intregrep{H\cB|K}_f \intregrep{H\cB}_g h = f*_K(g*h).$
   A simple computation shows that $f*_K(g*h)=(f*_Kg)*h.$
   Then $\intregrep{H\cB|K}_f \intregrep{H\cB}_g = \intregrep{H\cB|K}_{f*_Kg}.$
   Also,  $\intregrep{H\cB}_g\intregrep{H\cB|K}_f= (\intregrep{H\cB|K}_{f^*}\intregrep{H\cB}_{g^*})^* = \intregrep{H\cB}_{(f^**Kg^*)^*}.$
   Thus $\intregrep{H\cB|K}_f\in \bB(C^*_H(\cB))$ gets identified with the multiplier $M_f\colon C^*_H(\cB)\to C^*_H(\cB)$ sending $g\in C_c(\cB)$ to $f*_Kg.$
  \end{proof}
\end{theorem}

The partial converse of Corollary~\ref{cor:isomorphisms of q maps} that we can prove is the following.

\begin{corollary}\label{cor:qBKH iso iff qBKH iso}
 Consider a Fell bundle $\cB=\{B_t\}_{t\in G}$ and two normal subgroups of $G,$ $H\sbgp K\sbgp G.$
 If $\cB$ has the $HK-$WCP, then $\cB_K$ has the $H-$WCP.
 \begin{proof}
  We have a commutative diagram
  \begin{equation*}
   \xymatrix{  C^*(\cB_K)\ar[rr]^{\intregrep{K\cB|K}} \ar[d]_{q^{\cB_K}_H} & & \bB(C^*_K(\cB))\ar[d]^{\overline{q^\cB_{KH}}} \\
               C^*_H(\cB_K)\ar[rr]_{\intregrep{H\cB|K}} &  & \bB(C^*_H(\cB)) }
  \end{equation*}
  with the horizontal arrows and the right vertical one being faithful.
  Hence $q^{\cB_K}_H$ is faithful.
 \end{proof}
\end{corollary}

We can now improve Theorem~\ref{thm:homomorphism for reduced cross sectional algebra} as follows.

\begin{corollary}\label{cor:two subgroups and psi maps for tensor product}
 Assume the Fell bundle $\cB=\{B_t\}_{t\in G}$ and the subgroups  $H \sbgp K \sbgp G$ satisfy at least one of the conditions below:
 \begin{enumerate}
  \item $H$ is normal in $G.$
  \item $\cB$ is saturated and satisfies  Derighetti's strong condition with respect to $H.$
 \end{enumerate}
 
 Consider the universal representations
 \begin{align*}
 \regrep{H} & \colon G\to \bB(C^*_H(G))\subset \bB(L^2_H(G)) & 
  \regrep{K\cB}& \colon \cB\to \bB(C^*_K(\cB))\subset \bB(L^2_K(\cB))
 \end{align*}
and form the (minimal) tensor product representation
 \begin{equation*}
  \Psi^\cB_{KH}:= \regrep{H}\otimes\regrep{K\cB} \colon \cB\to \bB( C^*_H(G)\otimes C^*_K(\cB)).
 \end{equation*}
Then there is a unique *-homomorphism $\psi^{\cB}_{KH}$ making 
\begin{equation*}
\xymatrix{ C^*(\cB)\ar[rr]^{\intform{\Psi}^{\cB}_{KH}} \ar[rd]_{\regrep{H\cB}} &  &\bB(C^*_H(G)\otimes C^*_K(\cB))\\
	& C^*_H(\cB)\ar[ur]_{\psi^{\cB}_{KH}} & }
\end{equation*}
a commutative diagram.
Moreover, $\psi^{\cB}_{KH}$ is faithful.
 \begin{proof} 
 Let $V\colon H\to \bB(X)$ be a unitary representation with faithful integrated form, set $X':=L^2_H(G)\otimes_V X$ and $U:=\regrep{H}\otimes_V 1\colon G\to \bB(X').$
 Notice the integrated form of $U$ factors (via $q^G_H$) through a faithful representation $\pi_G\colon C^*_H(\cB)\to \bB(X').$
 Now take a non degenerate *-representation $R\colon \cB_K\to \bB(Y)$ with faithful integrated form and set $T:=\Ind_K^\cB(R)\colon \cB\to \bB(Y'),$ with $Y'=L^2_K(\cB)\otimes_T Y.$
 Remark~\ref{rem:induced and faithful integrated form} implies the existence of a unique faithful and non degenerate representation $\pi_\cB\colon C^*_K(\cB)\to \bB(Y')$ such that $\pi_\cB\circ q^\cB_K=\intform{T}.$
 Thus the tensor product of integrated forms 
 \begin{equation*}
  \pi:=\pi_G\otimes \pi_\cB\colon C^*_H(G)\otimes C^*_K(\cB)\to \bB( X'\otimes Y' )
 \end{equation*}
 is faithful and non degenerate, as it is the extension $\overline{\pi}$ to the multiplier algebra.

 Note $\overline{\pi}\circ \intform{\Psi^\cB_{KH}}= U\otimes T.$ Hence, Corollary~\ref{cor:of FEexell absorption principle} and Proposition~\ref{prop:condition for CHB=CtHtmuB} imply that for all $f\in L^1(\cB)$
 \begin{equation}\label{equ:psiKH factors via q}
  \| \intform{\Psi^\cB_{KH}}(f) \| 
  = \| \overline{\pi}\circ \intform{\Psi^\cB_{KH}}(f) \| 
  = \| (U\otimes T)^{\intform{\ }}_f \|
  \leq \|f\|_H^\cB.
 \end{equation}
 Thus there exists a unique morphism of C*-algebras $\psi^{\cB}_{KH}\colon C^*_H(\cB)\to \bB(C^*_H(G)\otimes C^*_K(\cB))$ such that $\psi^{\cB}_{KH}\circ \intregrep{H\cB}=\intform{\Psi}^\cB_{KH}$ (or, in non integrated forms, $\overline{\psi^{\cB}_{KH}}\circ \regrep{H\cB}=\Psi^{\cB}_{KH}$).
 
 In order to prove $\psi^{\cB}_{KH}$ is faithful we start the proof all over again choosing $R$ and $V$ differently, but we keep all the other constructions.
 For $V$ we now take the trivial representation $\kappa\colon H\to \bC$ and we put $R:=\Ind_H^{\cB_K}(R_0)$ for a non degenerate representation $R_0\colon \cB_H\to \bB(X_0)$ with faithful integrated form.
 Then $T$ is unitary equivalent to $\Ind_H^\cB(R_0).$
 
 The tensor product representation $V\otimes (T|_{\cB_H})$ is unitary equivalent to $\kappa \otimes (\Ind_H^{\cB}(R_0)|_{\cB_H}),$ which weakly contains $R_0$ (because $\cB$ satisfies Derighetti's strong condition with respect to $H$).
 Then the integrated form of $V\otimes (T|_{\cB_H})$ is a faithful representation of $C^*(\cB_H).$
 Now Corollary~\ref{cor:Fell absorption principle II bis} gives the inequalities
 \begin{equation*}
 \| \intregrep{H\cB}_f \|=\|f\|_H^\cB  = \| (U\otimes T)^{\intform{\ }}_f \| = \| \pi\circ \intform{\Psi^\cB_{KH}}(f) \| \leq \| \intform{\Psi^\cB_{KH}}(f)  \|
 = \|\intform{\psi}^{\cB}_{KH}(\intregrep{H\cB}_f)\|\leq \| \intregrep{H\cB}_f \|.
 \end{equation*}
 for all $f\in L^1(\cB).$
 Thus $\intform{\psi}^{\cB}_{KH}$ is faithful.
 \end{proof}
\end{corollary}

As mentioned in the introduction, it is known that every Fell bundle over an amenable group has the WCP.
The result below is a generalisation of this fact.

\begin{corollary}\label{cor:amenability of G implies that of B}
	Let $\cB=\{B_t\}_{t\in G}$ be a Fell bundle and let $H \sbgp  K \sbgp G$ be such that $G$ has the $HK-$WCP.
	If at least one of the conditions below is satisfied, then $\cB$ has the $HK-$WCP.
\begin{enumerate}
 \item Both $H$ and $K$ are normal in $G.$
 \item $\cB$ is saturated and satisfies  Derighetti's strong condition with respect to both $H$ and $K.$
\end{enumerate}
 \begin{proof}
 	Consider the diagram
 \begin{equation*}
 \xymatrix{ C^*_K(\cB)\ar[rr]^(.3){\psi^\cB_{GK}} \ar[dd]_{q^\cB_{KH}}& & \bB(C^*_K(G)\otimes C^*(\cB))\ar[dd]^{\overline{ q^G_{KH}\otimes 1}}\\
 	 & C^*(\cB)\ar[lu]^{q^\cB_K}\ar[dl]_{q^\cB_H}\ar[ur]_{\Psi^\cB_{GK}} \ar[dr]^{\Psi^\cB_{GH}} & \\
 	C^*_H(\cB)\ar[rr]_(.3){\psi^\cB_{GH}}& & \bB( C^*_H(G)\otimes C^*(\cB))} 
 \end{equation*}
 Lets analyze the four triangles with an horizontal or vertical arrow and  opposite vertex in $C^*(\cB).$
 The triangles on the top and bottom commute by Corollary~\ref{cor:two subgroups and psi maps for tensor product} (recall $q^\cB_H$ is the integrated form of $\intregrep{H\cB}$). 
 The one on the right commutes because it does if we replace the $\Psi$ maps with their non integrated forms.
 Finally, the definition of $q^\cB_{KH}$ implies the triangle on the left commutes.
 
 The external arrows form a commutative diagram because $q^\cB_K$ is surjective, and the two horizontal arrows and the right vertical one are faithful, so $q^\cB_{KH}$ is faithful. 
 \end{proof}
\end{corollary}

\subsection{The Kaliszewski-Landstad-Quigg cross sectional C*-algebra}

In this short (sub)section we introduce $E-$cross sectional C*-algebras following the definition of $E-$crossed products of \cite{KaLAQu2013}; where $E$ is a non zero $G-$invariant weak*-closed ideal of the Fourier-Stielltjes algebra of $G,$ $B(G)$ (see \cite{KaLAQu2013} for the definitions and notation).
We then relate these cross sectional C*-algebras to our $H-$cross sectional C*-algebras.

Let $\cB=\{B_t\}_{t\in G}$ be a Fell bundle and $E\subset B(G)$ as before.
By identifying $B(G)$ with the dual space $C^*(G)',$ Kaliszewski, Landstad and Quigg define the (exotic) group C*-algebra $C^*_E(G)$ as the quotient of $C^*(G)$ by the pre-annihilator ${}^\perp E$ of $E.$
The natural quotient map $q_E\colon C^*(G)\to C^*_E(G)$ can be extended to the multiplier algebras as $\overline{q_E}\colon \bB(C^*(G))\to \bB(C^*_E(G))$ and the composition $\regrep{EG}:=\overline{q_E}\circ \Lambda^G\colon G\to \bB(C^*_E(G))$ is a unitary representation with integrated form $q_E.$

The map $\regrep{\cB}\otimes \regrep{EG}\colon \cB\to \bB(C^*(\cB)\otimes C^*_E(G))$ sending $b\in B_t$ to $\regrep{\cB}_b\otimes \regrep{EG}_t$ is a *-representation that we can integrate to define the ideal $J_{\cB,E}:=\ker(\regrep{\cB}\intform{\otimes} \regrep{EG})$ and the $E-$cross sectional C*-algebra of $\cB$ as
\begin{equation*}
C^*_E(\cB):=C^*(\cB)/J_{\cB,E}\equiv \regrep{\cB}\intform{\otimes} \regrep{EG}(C^*(\cB))\subset \bB(C^*(\cB)\otimes C^*_E(G)).
\end{equation*}

Fix a closed subgroup $H$ of $G.$
Then $C^*_H(G)=C^*_{E_H}(G) $ with $E_H:=\ker(\intregrep{HG})^\perp$ (the annihilator).
As usual, we identify the dual space $C^*_H(G)'$ with $E_H.$
Note that $\varphi\in B(G)$ belongs to $E_H$ if and only if there exists a unitary representation $U\colon G\to \bB(X)$ and vectors $\xi,\eta\in X$ such that (i) $\varphi(t)=\langle \xi,U_t\eta\rangle,$ for all $t\in G;$ and (ii) $U$ is weakly contained in a *-representation induced from $H.$

Consider the subspace $E^0_H$ formed by all those functions $\varphi\colon G\to \bC$ of the form
\begin{equation*}
\varphi(t)= \langle \xi,\Ind_H^G(U)\eta\rangle
\end{equation*}
for some unitary representation $U\colon H\to \bB(X)$ and vectors $\xi,\eta\in L^2_H(G)\otimes_U X.$
Then ${}^\perp E^0_H=\ker(\intregrep{HG})$  and $E_H=\overline{E^0_H}^{w^*}$ (weak*-closure).
In fact, by Theorem~\ref{thm:foundamental facts on weak containment}, $E_H$ is the $\|\ \|_\infty-$closure of $E_H^0.$

A natural question arises: is there any relation between $C^*_H(\cB)$ and $C^*_{E_H}(\cB)$?
Well, for the trivial bundle $\cB=\cT_G\equiv G$  we obviously have $C^*_H(\cB)=C^*_H(G)=C^*_{E_H}(G)=C^*_{E_H}(\cB),$ but we do not know if the identity $C^*_H(\cB)=C^*_{E_H}(\cB)$ holds for every Fell bundle over $G.$
It does if $\cB$ is saturated or $H$ is normal in $G,$ as we shall see.

Clearly, $\regrep{E_HG}\equiv \regrep{HG}.$
Thus $\regrep{\cB}\otimes \regrep{E_HG}=\regrep{\cB}\otimes \regrep{HG}.$
Consider non degenerate *-representations $T\colon \cB\to \bB(X)$ and $U\colon H\to \bB(Y)$ with faithful integrated forms.
Then we may think
\begin{align*}
C^*(\cB)&\subset \bB(X)    & C^*_H(G&)=\intform{\Ind}_H^G(C^*(G))\subset \bB(L^2_H(G)\otimes_U Y),
\end{align*}
which gives $C^*_{E_H}(\cB)\subset \bB(X\otimes (L^2_H(G)\otimes_U Y)).$
More precisely, $C^*_{E_H}(\cB)$ gets identified with the image of $C^*(\cB)$ under the integrated form of
\begin{equation*}
(b\in B_t)\mapsto \regrep{\cB}_b\otimes \regrep{E_HG}_t \equiv T_b\otimes \Ind_H^G(U)_t.
\end{equation*}

\begin{remark}
Combining \cite[VIII 21.24 \& XI 12.4]{FlDr88} we get that $T\otimes\Ind_H^G(U)$ weakly contains any other *-representation of the form $S\otimes V;$ $S$ being a non degenerate *-representation of $\cB$ and $V$ a unitary representation of $G$ weakly contained in some unitary representation induced from $H.$
\end{remark}

By construction and Theorem~\ref{thm:Fells absorption principle I},
\begin{equation*}
J_{\cB,E_H} =\ker ( T\intform{\otimes}\Ind_H^G(U))
=\bigcap_{t\in G}\ker (\intform{\Ind}_{tHt^{-1}}^\cB({}_{t|}T\otimes {}_t U))\supset \bigcap_{t\in G}\ker (\intregrep{tHt^{-1}\cB}).
\end{equation*}
If either $H$ is normal in $G$ or $\cB$ is saturated, then Corollary~\ref{cor:Fell absorption principle II bis} gives
\begin{equation*}
J_{\cB,E_H}\supset \bigcap_{t\in G}\ker (\intregrep{tHt^{-1}\cB})= \ker(\intregrep{H\cB})
\end{equation*}

Moreover, if either $H$ is normal in $G$ or $\cB$ is saturated and satisfies Derighetti's strong condition with respect to $H,$ then Corollary~\ref{cor:two subgroups and psi maps for tensor product} gives $J_{\cB,E_H}=\ker(\intregrep{H\cB})$ and $C^*_{E_H}(\cB)=C^*_H(\cB).$

\section{Induction from subgroups and via equivalence bundles}\label{sec:induction and Morita}

In this section we use the notion of strong equivalence of Fell bundles presented in \cite{abadie2019morita} together will all the accessory constructions of \cite{AbFrrEquivalence} (specially linking Fell bundles).
Given Fell bundles $\cA=\{A_t\}_{t\in G}$ and $\cB=\{B_t\}_{t\in G}$ and an $\cA-\cB-$(strong) equivalence bundle $\cX=\{X_t\}_{t\in G},$ there exists a $C^*(\cA)-C^*(\cB)-$equivalence module $C^*(\cX)$ which can be constructed by completing $C_c(\cX)$ inside the cross sectional C*-algebra of the linking Fell bundle $\bL(\cX).$
In fact $C^*(\cA)$ and $C^*(\cB)$ are full corners of $C^*(\bL(\cX))$ and $C^*(\cX)=C^*(\cA)C^*(\bL(\cX))C^*(\cB).$

Given a non degenerate *-representation $T\colon \cB\to \bB(X_A),$ we can induce the integrated form $\intform{T}\colon C^*(\cB)\to \bB(X_A)$ via $C^*(\cX)$ to a non degenerate *-representation
\begin{equation*}
 \Ind_{C^*(\cX)}(\intform{T})\colon C^*(\cA)\to \bB(C^*(\cX)\otimes_{\intform{T}}X_A),
\end{equation*}
that can be disintegrated (uniquely) to a *-representation we denote
\begin{equation*}
\Ind_\cX(T)\colon \cA\to \bB(C^*(\cX)\otimes_{\intform{T}}X_A).
\end{equation*}

\begin{proposition}\label{prop:induction and Morita-Rieffel equivalence}
 Let $\cA=\{A_t\}_{t\in G}$ and $\cB=\{B_t\}_{t\in G}$ be Fell bundles, $H \sbgp G$ and $\cX=\{X_t\}_{t\in G}$ an $\cA-\cB-$equivalence bundle establishing a strong Morita-Rieffel equivalence.
 Then the reduction $\cX_H=\{X_t\}_{t\in H}$ establishes a strong Morita-Rieffel equivalence between $\cA_H$ and $\cB_H.$
 
 For every *-representation $T\colon \cB_H\to \bB(Y_C)$ we have $\Ind_H^\cA( \Ind_{\cX_H}(T)) \approx \Ind_{\cX}(\Ind_H^\cB(T)).$
 \begin{proof}
  The claim about $\cX_H=\{X_t\}_{t\in H}$ being a strong $\cA_H-\cB_H-$strong equivalence bundle follows immediately form the definition of the strong equivalence.
  There is a very important consequence that we want to extract from the strong equivalence assumption.
  We know that, for all $t\in G,$ $\cspn {}_\cA\langle X_t,X_t\rangle = \cspn A_tA_t^*.$
  Besides, considering $X_t$ as a left $A_e-$Hilbert module we conclude that
  \begin{equation}\label{equ:Xt=AtXe}
   X_t = \cspn {}_\cA\langle X_t,X_t\rangle X_t \subset \cspn A_t A_{t^{-1}} X_t\subset \cspn A_t X_e \subset X_t.
  \end{equation}
  It then follows (by symmetry) that $\cspn A_t X_e = X_t =\cspn X_e B_t$ for all $t\in G.$

  The construction itself of the linking Fell bundle $\bL(\cX)$ implies that we may regard $\bL(\cX)_H$ as $\bL(\cX_H).$
  Besides $\cA,$ $\cB$ and $\cX$ are sub Banach bundles of $\bL(\cX)$ and the equivalence bundle structure of $\cX$ comes from the natural Fell bundle structure of $\bL(\cX).$ 
  It is then convenient to work with the restriction map $p\colon C_c(\bL(\cX))\to C_c(\bL(\cX)_H)$ given by $p(f)(t)=\Delta_G(t)^{1/2}\Delta_H(t)^{-1/2}f(t)$ and to consider it as the extension of the restriction maps of $\cA$ and $\cB.$
  Since $\cB_H$ sits inside $\bL(\cX)_H$ as a full hereditary subbundle, we may regard $L^2_H(\cB)$ as the closure of $C_c(\cB)$ in $L^2_H(\bL(\cX)).$
  This implies, for example, that expressions like $p(f^** g)*u=p(f^**(gu))$ make sense and hold for $f,g\in C_c(\cB)$ and $u\in C_c(\cX_H),$ where $gu$ is the element produced by the action of $u\in C_c(\cX_H)\subset C^*(\bL(\cX)_H)$ over $g\in C_c(\cB)\subset L^2_H(\bL(\cX)).$
  We will use other identities of the sort that we can deduce from \eqref{equ:ref properties of p}.
  
  Define $L^2_H(\cX)$ as the closure of $C_c(\cX)$ in $L^2_H(\bL(\cX)).$  
  For all $f,g\in C_c(\cX)$ we have $f^**g\in C_c(\cB),$ $p(f^**g)$ is positive in $C^*(\bL(\cX)_H)$ and is contained in $C^*(\cB_H),$ which is a C*-subalgebra of $C^*(\bL(\cX)_H).$
  Thus $\langle L^2_H(\cX),L^2_H(\cX)\rangle \subset C^*(\cB_H).$
  Besides, for all $f\in C_c(\cX)$ and $u\in C_c(\cB_H)$ we have $fu\in C_c(\cX).$
  Hence $L^2_H(\cX)C^*(\cB_H)\subset L^2_H(\cX)$ and we may think of $L^2_H(\cX)$ as a right $C^*(\cB_H)-$Hilbert module with the structure inherited from $L^2_H(\bL(\cX)).$
  
  Given $a\in \cA$ and $f\in C_c(\cX),$ we have $\regrep{H\bL(\cX)}_a f\in C_c(\cX).$
  Thus $\regrep{H\bL(\cX)}(\cA)L^2_H(\cX)\subset L^2_H(\cX)$ and we have a non degenerate *-representation $\regrep{H\cX}\colon \cA\to \bB(L^2_H(\cX))$ such that $\regrep{H\cX}_af = \regrep{H\bL(\cX)}_a f.$
  
  Fix, for the rest of the proof, a representation $T\colon \cB_H\to \bB(Y_C).$
  For convenience we set: $S:=\Ind_{\cX_H}(T);$ $T':=\Ind_H^\cB(T)$ and $S':=\Ind_H^\cA(S).$
  We must show that $S'$ is unitary equivalent to $\Ind_{\cX}(T'),$ which we will do by proving that both of them are unitary equivalent to the *-representation
  \begin{equation*}
   \regrep{H\cX}\otimes_{\intform{T}} 1\colon \cA\to \bB( L^2_H(\cX)\otimes_{\intform{T}} Y_C ),\ a\mapsto \regrep{H\cX}_a\otimes_{\intform{T}} 1.
  \end{equation*}  
  
  The underlying Hilbert modules of $S'$ and $\Ind_\cX(T')$ are, respectively,
  \begin{align*}
   L^2_H(\cA)\otimes_{\intform{S}} &(C^*(\cX_H)\otimes_{\intform{T}}Y_C)  & C^*(\cX)\otimes_{\intform{T'}} & (L^2_H(\cB)\otimes_{\intform{T}} Y_C)
  \end{align*}
  
  Take $f,g\in C_c(\cA)\subset L^2_H(\cA),$ $u,v\in C_c(\cX_H)\subset C^*(\cX_H)$ and $\xi,\eta\in Y_C.$
  We denote $ f\otimes_{\intform{S}} u\otimes_{\intform{T}} \xi $ the image of the elementary algebraic tensor $f\odot u\odot \xi$ in $ L^2_H(\cA)\otimes_{\intform{S}} (C^*(\cX_H)\otimes_{\intform{T}}Y_C).$
  Then
  \begin{multline}\label{equ:first inner product double induction}
   \langle f\otimes_{\intform{S}} u\otimes_{\intform{T}} \xi,  g\otimes_{\intform{S}} v\otimes_{\intform{T}} \eta\rangle
      = \langle  u\otimes_{\intform{T}} \xi, \intform{S}_{p(f^**g) } ( v\otimes_{\intform{T}} \eta)\rangle\\
      = \langle  u\otimes_{\intform{T}} \xi,  p(f^**g) * v\otimes_{\intform{T}} \eta)\rangle
      = \langle \xi, \intform{T}_{ u^* * p(f^**g) * v }\eta\rangle
      = \langle \xi, \intform{T}_{  p((fu)^**(gv)) }\eta\rangle.
  \end{multline}

  Now take $h,k\in C_c(\cX)\subset C^*(\cX),$ $z,w\in C_c(\cB)\subset L^2_H(\cB)$ and $\xi,\eta\in Y_C.$
  Then
  \begin{multline}\label{equ:second inner product double induction}
   \langle h\otimes_{\intform{T'}} z\otimes_{\intform{T}} \xi,  k\otimes_{\intform{T'}} w\otimes_{\intform{T}} \eta\rangle 
      = \langle z\otimes_{\intform{T}} \xi ,  \intform{T'}_{ h^**k } ( w\otimes_{\intform{T}} \eta)  \rangle\\
       = \langle z\otimes_{\intform{T}} \xi ,   (h^**k  *w)\otimes_{\intform{T}} \eta  \rangle
      = \langle \xi,\intform{T}_{ p(z^* * h^* * k * w) }  \eta\rangle
       = \langle \xi,\intform{T}_{ p((h* z)^* * (k * w)) }  \eta\rangle.
  \end{multline}

  Expressions \eqref{equ:first inner product double induction} and \eqref{equ:second inner product double induction} seem to represent the same kind of inner product.
  The products $h*z$ and $k*w$ in \eqref{equ:second inner product double induction} belong to $C_c(\cX)$ and span a dense subset of $C_c(\cX)$ with respect to the inductive limit topology.
  To prove this one may notice that if one takes $X=\cB,$ then the desired result follows from the construction of approximate units of $L^1(\cB)$ contained in $C_c(\cB)$ of \cite{FlDr88}.
  Then one can easily use that proof to fit the general case.
  On the other hand, the elements $fu$ in \eqref{equ:first inner product double induction} belong to $C_c(\cX)$ and we claim they expand a subset of $C_c(\cX)$ that is dense in the inductive limit topology.
  
  By \cite[II 14.6]{FlDr88} and \eqref{equ:Xt=AtXe}, the sections of the form $t\mapsto f(t)x$ for $f\in C_c(\cA)$ and $x\in X_e$ span a dense subspace of $C_c(\cX),$ with respect to the inductive limit topology.
  Fix $f\in C_c(\cA)$ and $x\in X_e.$
  Take $g\in C_c(X_H)$ such that $g(e)=x.$
  Given a compact neighbourhood $V\subset H$ of $e$ let $\varphi_V\in C_c(H)$ such that $\supp(\varphi_V)\subset V$ and $\int_H \varphi_V(r^{-1}) \Delta_G(r^{-1})^{1/2}\Delta_H(r)^{-1/2}\, dr = 1.$
  The support of $f(\varphi_Vg)$ is contained in $\supp(f)V$ and for all $t\in \supp(f)V$
  \begin{align*}
   \|f(t)x - fu(t)\|
     & \leq \int_H \| f(t)x-  f(tr)g(r^{-1})\|\varphi_V(r^{-1})\Delta_G(r)^{1/2}\Delta_H(r)^{-1/2}\, dr\\
     & \leq \mu(V) \sup\{\|f(s)x-f(sr)g(r^{-1})\|\colon s\in \supp(f),\ r\in V\},
  \end{align*}
  where $\mu(V)$ is the measure of $V\subset H.$
  The expression on the right hand side above goes to $0$ as $V$ decreases to $\{e\}.$
  Thus $t\mapsto f(t)x$ lays in the closure (in the inductive limit topology) of the functions $fu$ for $u\in C_c(\cX_H).$
  
  The discussion above, together with \eqref{equ:first inner product double induction}, \eqref{equ:second inner product double induction} and claim (3) in Proposition~\ref{prop:rep on inducing module}, imply the existence of unitary operators
  \begin{align*}
   U\colon  L^2_H(\cA)\otimes_{\intform{S}} (C^*(\cX_H)\otimes_{\intform{T}}Y_C)& \to L^2_H(\cX)\otimes_{\intform{T}} Y_C  & f\otimes_{\intform{S}} u\otimes_{\intform{T}} \xi & \mapsto fu\otimes_{\intform{T}}\xi\\
   V\colon  C^*(\cX)\otimes_{\intform{T'}} (L^2_H(\cB)\otimes_{\intform{T}} Y_C)& \to L^2_H(\cX) \otimes_{\intform{T}} Y_C & h\otimes_{\intform{T'}} z\otimes_{\intform{T}} \xi & \mapsto h*z\otimes_{\intform{T}}\xi.
  \end{align*}
  
  For all $a\in \cA$ and $f\otimes_{\intform{S}} u\otimes_{\intform{T}} \xi$ as above we have
  \begin{align*}
   U^* \regrep{H\cX}_a U (f\otimes_{\intform{S}} u\otimes_{\intform{T}} \xi)
     & = U^*(a(fu)\otimes_{\intform{T}} \xi)   
       = U^*((af)u\otimes_{\intform{T}} \xi)
       = af\otimes_{\intform{S}} u\otimes_{\intform{T}} \xi\\
     & = S'_a(f\otimes_{\intform{S}} u\otimes_{\intform{T}} \xi).
   \end{align*}
  A similar computation shows that $V^*\regrep{H\cX}_a V = \Ind_{\cX}(T')_a,$ completing the proof. 
 \end{proof}
\end{proposition}

Form the proof above we can extract the following conclusion.

\begin{corollary}
 The right $C^*(\cB_H)-$Hilbert module $L^2_H(\cX)$ carries a canonical representation of $\cA,$ $\regrep{H\cX}\colon \cA\to \bB(L^2_H(\cX)),$ that induces representations from $\cB_H$ to representations of $\cA.$
 This induction process is unitary equivalent to any of the compositions of inductions
 \begin{align}\label{equ:composed inductions}
  T & \mapsto \Ind_H^\cB(T)\mapsto \Ind_\cX(\Ind_H^\cB(T))  & T&\mapsto \Ind_{\cX_H}(T)\mapsto \Ind_H^\cA(\Ind_{\cX_H}(T)).
 \end{align}
 Moreover, the equivalence holds because $L^2_H(\cX)$ is unitary equivalent to the modules
 \begin{align*}
  C^*(\cX)\otimes_{C^*(\cB)} &L^2_H(\cB) & L^2_H(\cA)&\otimes_{C^*(\cA_H)} C^*(\cX_H),
 \end{align*}
 which are the modules that give the composed induction processes of~\eqref{equ:composed inductions}.
\end{corollary}

Suppose we are given two C*-dynamical systems, $(A,G,\alpha)$ and $(B,G,\beta),$ which are Morita-Rieffel equivalent via an action $\gamma$ of $G$ on a $A-B-$equivalence bimodule $X.$
It is a known fact that the full crossed products $A\rtimes_\alpha G$ and $B\rtimes_\beta G$ are Morita-Rieffel equivalent.
Moreover, it is also known that the reduced crossed products $A\rtimes_{\red\alpha} G$ and $B\rtimes_{\red\beta} G$ are Morita-Rieffel equivalent.

The semidirect product bundle of $\alpha,$ $\cB_\alpha,$ is a saturated Fell bundle such that $C^*(\cB_\alpha)=A\rtimes_{\alpha}G$ and $C^*_\red(\cB_\alpha)=A\rtimes_{\red\alpha}G.$
We can even construct other crossed products $A\rtimes_{H\alpha}G:=C^*_H(\cB_\alpha)$ associated to subgroups $H \sbgp G.$
As shown in \cite{AbFrrEquivalence,abadie2019morita}, the semidirect product (equivalence) bundle of $\gamma$ establishes a strong equivalence between $\cB_\alpha$ and $\cB_\beta.$
This puts us in the situation of the result below and so we get a Morita-Rieffel equivalence between $A\rtimes_{H\alpha }G$ and $B\rtimes_{H\beta }G.$

\begin{corollary}\label{cor:strong equivalence and H algebras for saturated}
 If $\cX$ is an $\cA-\cB-$strong equivalence bundle, then the C*-algebras $C^*_H(\cA)$ and $C^*_H(\cB)$ may be identified with the closures of $C_c(\cA)$ and $C_c(\cB)$ in $C^*_H(\bL(\cX)),$ respectively, and both closures are full hereditary C*-subalgebras of $C^*_H(\bL(\cX)).$
 Thus $C^*_H(\cA),$ $C^*_H(\cB)$ and $C^*_H(\bL(\cX))$ are Morita-Rieffel equivalent to each other.
 \begin{proof}
 Take a non degenerate *-representation $T\colon \bL(\cX)_H\to \bB(Y)$ with faithful integrated form.
 The integrated form of the restriction $S\colon \cB_H\to \bB(Y),$ $b\mapsto T_b,$ is faithful (an possibly degenerate) because it is the restriction of $\intform{T}\colon C^*(\bL(\cX)_H)\to \bB(Y)$ to $C^*(\cB).$
 If we regard $C^*(\bL(\cX)_H)\equiv C^*(\bL(\cX_H)) = \bL(C^*(\cX_H))$ as in \cite[Corollary 4.10]{AbFrrEquivalence} and induce $\intform{S}\colon C^*(\cB_H)\to \bB(Y)$ to a *-representation of $\bL(C^*(\cX_H))$ via the $\bL(C^*(\cX_H))-C^*(\cB_H)-$equivalence module 
 \begin{equation*}
\cspn \bL(C^*(\cX_H)) C^*(\cB_H) = C^*(\cX_H)\oplus C^*(\cB_H),
 \end{equation*}
 then we obtain a *-representation that is unitary equivalent to $\intform{T}.$
   In symbols this gives the unitary equivalence $\Ind_{C^*(\cX_H)\oplus C^*(\cB_H)}(\intform{S})\approx \intform{T},$ where $C^*(\cX_H)\oplus C^*(\cB_H)\subset \bL(C^*(\cX_H))\equiv C^*(\bL(\cX)_H)$ is the direct sum implementing the equivalence between $C^*(\bL(\cX)_H)$ and $C^*(\cB_H).$
   But $C^*(\cX_H)\oplus C^*(\cB_H)=C^*(\cX_H\oplus \cB_H),$  where the (bundle) direct sum  $\cX_H\oplus\cB_H\equiv (\cX\oplus \cB)_H$ is formed inside $\bL(\cX)_H.$
   Thus the unitary equivalence of integrated forms $\Ind_{C^*(\cX_H)\oplus C^*(\cB_H)}(\intform{S})\approx \intform{T}$ passes to disintegrated forms (by \cite[II 11.15]{FlDr88}) and can be used to obtain
   \begin{equation*}
    \Ind_{\cX_H\oplus \cB_H}( T|_{\cB_H} ) \approx T.
   \end{equation*}
   
   By Proposition~\ref{prop:induction and Morita-Rieffel equivalence} and Remark~\ref{rem:induced and faithful integrated form}, for all $g\in C_c(\cB)\subset C_c(\bL(\cX))$ we have
   \begin{align*}
    \| g \|_H^{\bL(\cX)}
     &  = \| \intform{\Ind}_H^{\bL(\cX)} (T)_g \|
        = \| \intform{\Ind}_H^{\bL(\cX)} ( \Ind_{\cX_H\oplus \cB_H}( T|_{\cB_H} ))_g \|\\
     &  = \| \intform{\Ind}_{\cX\oplus \cB} ( \Ind_H^\cB( T|_{\cB_H} ))_g \|.
   \end{align*}
   
   Notice that every $b\in \cB\subset \bL(\cX)$  acts as the null operator on the copy of $\cX$ inside $\cX\oplus \cB\subset \bL(\cX).$
   Thus, $\intform{\Ind}_{\cX\oplus \cB} ( \Ind_H^\cB( T|_{\cB_H} ))_b$ acts as the null operator on the copy of $L^2_H(\cX)\otimes_{\Ind_H^\cB(T|_{\cB_H})}Y$ inside $L^2_H(\cX\oplus \cB)\otimes_{\Ind_H^\cB(T|_{\cB_H})}Y.$
   Hence, the direct sum decomposition
   \begin{equation*}
     L^2_H(\cX\oplus \cB)\otimes_{\Ind_H^\cB(T|_{\cB_H})}Y =\left(L^2_H(\cX)\otimes_{\Ind_H^\cB(T|_{\cB_H})}Y\right)\oplus \left(L^2_H(\cB)\otimes_{\Ind_H^\cB(T|_{\cB_H})}Y\right)
   \end{equation*}
   gives a decomposition
   \begin{equation*}
    \intform{\Ind}_{\cX\oplus \cB} ( \Ind_H^\cB( T|_{\cB_H} ) = 0\oplus \intform{\Ind}_{\cB} ( \Ind_H^\cB( T|_{\cB_H} )).
   \end{equation*}
   But the induction from $\cB\equiv \cB_G$ to $\cB$ preserves the unitary equivalence class of the representation, which yields
      \begin{equation*}
   \intform{\Ind}_{\cX\oplus \cB} ( \Ind_H^\cB( T|_{\cB_H} )) 
   = 0\oplus  \Ind_H^\cB( T|_{\cB_H} ).
   \end{equation*}
   Hence, for all $g\in C_c(\cB)$ we have
   \begin{align*}
    \| g \|_H^{\bL(\cX)}
     &  = \| \intform{\Ind}_{\cB} ( \Ind_H^\cB( T|_{\cB_H} ))_g \|
       =  \|  \Ind_H^\cB( T|_{\cB_H} )_g \|
       = \|g\|_H^\cB.
   \end{align*}
   
   The identity above implies we may regard $C^*_H(\cB)$ as the closure of $C_c(\cB)$ in $C^*_H(\bL(\cX))$ and, by symmetry, $C^*_H(\cA)$ gets identified with the closure of $C_c(\cA).$
   From now on the proof is a straightforward adaptation of \cite[Theorem 4.5]{AbFrrEquivalence}.
 \end{proof}
\end{corollary}

The strong equivalence hypothesis in the Corollary above can be replaced by a weak equivalence (in the sense of \cite{abadie2019morita}) at the expense of assuming that $H$ is normal in $G,$ as we shall see after some preparatory results.
To prove this claim we adapt \cite[Proposition 3.2]{Ab03} to cross sectional C*-algebras other than the reduced one; which corresponds to $H=\{e\}$ (and motivates conditions 1 and 2 below).

\begin{lemma}\label{lem:subbundle and sub cross sectional algebras}
 Let $\cA=\{A_t\}_{t\in G}$ be a sub Fell bundle of $\cB=\{B_t\}_{t\in G}$ and $H \sbgp G$ such that
 \begin{enumerate}
  \item Every Fell bundle over $G$ satisfies Derighetti's strong condition with respect to $H,$ like it is the case if $H$ is either normal, open or has open normaliser in $G.$
  \item The canonical morphism of C*-algebras $C^*(\cA_H)\to C^*(\cB_H)$ extending the inclusion $L^1(\cA_H)\subset C^*(\cB_H)$ is faithful.
 \end{enumerate}
 Then $C^*_H(\cA)$ is C*-isomorphic to the closure fo $C_c(\cA)$ in $C^*_H(\cB),$ which gives an inclusion of C*-algebras $C^*_H(\cA)\subset C^*_H(\cB).$
 \begin{proof}
  Let $S\colon \cB_H\to \bB(Y)$ be a non degenerate *-representation with faithful integrated form and $T\colon \cB\to \bB(Z)$ the representation induced by $S.$ 
  Then $\intform{T|_{ \cB_H}}\colon C^*(\cB_H) \to \bB(Z)$ is faithful because it weakly contains $\intform{S}.$
  
  Consider the trivial representation $\kappa\colon H\to \bC$ and the representation of $G$ induced by it,  $\lambda \colon G\to \bB(L^2_H(G)_\kappa).$
  Since the integrated form of $\kappa\otimes (T|_{\cB_H})\approx T|_{\cB_H}$ is a faithful representation of $C^*(\cB_H),$ Corollary~\ref{cor:Fell absorption principle II bis} implies that we may think of $C^*_H(\cB)$ as $(\lambda\otimes T)^{\intform{\ }}(C^*(\cB))\subset \bB(L^2_H(G)_\kappa\otimes Y).$
  Under this identification the closure of $C_c(\cA)$ in $C^*_H(\cB)$ becomes the closure of
  \begin{equation*}
   I_c:=  \{(\lambda\otimes T)^{\intform{\ }}_f\colon f\in C_c(\cA)\} = \{(\lambda\otimes (T|_{\cA}))^{\intform{\ }}_f\colon f\in C_c(\cA)\}\subset \bB(L^2_H(G)_\kappa\otimes Y).
  \end{equation*}

  The identity above implies we should consider the restriction $R:=T|_{\cA}\colon \cA\to \bB(Y)$ and it's essential part $R'\colon \cA\to \bB(Y').$
  We may regard $L^2_H(G)_\kappa \otimes Y'$ as a closed subspace of $L^2_H(G)_\kappa\otimes Y$ which is $I_c-$invariant.
  Notice that $(\lambda\otimes T)(\cA)(L^2_H(G)_\kappa \otimes Y)\subset L^2_H(G)_\kappa \otimes Y'.$
  Hence $I_c(L^2_H(G)_\kappa \otimes Y)\subset L^2_H(G)_\kappa \otimes Y'$ and this implies the elements of $I_c$ vanish in $(L^2_H(G)_\kappa \otimes Y')^\perp.$
  Consequently, $I_c$ is isometrically *-isomorphic to 
  \begin{equation*}
   I'_c= \{(\lambda\otimes R')^{\intform{\ }}_f\colon f\in C_c(\cA)\}\subset \bB(L^2_H(G)_\kappa\otimes Y').
  \end{equation*}
  
  Notice that $R'$ is non degenerate and that the integrated form of $\kappa\otimes (R'|_{\cA_H})= R'|_{\cA_H}$ is faithful because it is the restriction of the non degenerate part of $\intform{T|_{\cB_H}}$ to $C^*(\cA_H)\subset C^*(\cB_H).$
  Thus Corollary~\ref{cor:Fell absorption principle II bis} implies the closure of $I'_c$ (and hence of $I_c$) is C*-isomorphic to $C^*_H(\cA).$
 \end{proof}
\end{lemma}

\begin{corollary}\label{cor:equivalence and H algebras for non saturated}
 Let $\cA$ and $\cB$ be Fell bundles over $G$ which are weakly equivalent via a Hilbert bundle $\cX=\{X_t\}_{t\in G}.$
 If $H \sbgp G$ is such that every Fell bundle over $G$ satisfies Derighetti's strong condition with respect to $H$ (e.g. if $H$ is either normal, open or has open normaliser in $G$) then $C^*_H(\cA)$ and $C^*_H(\cB)$ are canonically C*-isomorphic to the closures of $C_c(\cA)$ and $C_c(\cB)$ in $C^*_H(\bL(\cX)),$ respectively.
 This describes $C^*_H(\cA)$ and $C^*_H(\cB)$ as full hereditary C*-subalgebras of $C^*_H(\bL(\cX))$ and, consequently, they are Morita-Rieffel equivalent.
 \begin{proof}
  Notice that $\cB_H$ is hereditary in $\bL(\cX)_H$ in the sense of \cite[Theorem 4.3]{AbFrrEquivalence}, thus the inclusion $L^1(\cB_H)\subset C^*(\bL(\cX)_H)$ extends to $C^*(\cB_H)\subset C^*(\bL(\cX)_H).$
  After this we can use Lemma~\ref{lem:subbundle and sub cross sectional algebras} and the comments we made after Definition~\ref{defi:strong Derighetti} to adapt the proof of \cite[Theorem 4.5]{AbFrrEquivalence}.
 \end{proof}
\end{corollary}

\begin{theorem}\label{thm:induction via CKX}
  Let $\cX=\{X_t\}_{t\in G}$ be a bundle establishing a weak Morita-Rieffel equivalence between the Fell bundles $\cA=\{A_t\}_{t\in G}$ and $\cB=\{B_t\}_{t\in G},$ consider subgroups $H \sbgp K \sbgp G$ and assume that at least one of the following conditions is satisfied:
  \begin{enumerate}
  	\item $\cX$ is a strong equivalence bundle.
  	\item  Every Fell bundle over $G$ satisfies Derighetty's strong condition with respect to $H$ and $K.$
  \end{enumerate}
  By Corollaries~\ref{cor:strong equivalence and H algebras for saturated} and~\ref{cor:equivalence and H algebras for non saturated}, we may think $C^*_K(\cA)$ and $C^*_K(\cB)$ as full hereditary subalgebras of $C^*_K(\bL(\cX)).$
  If $C^*_K(\cX)$ is the closure of $C_c(\cX)$ in $C^*_K(\bL(\cX)),$ then $C^*_K(\cX)$ is a $C^*_K(\cA)-C^*_K(\cB)-$equivalence bimodule with the canonical structure inherited from $C^*_K(\bL(\cX)),$ and $C^*_K(\bL(\cX))$ is $C^*-$ isomorphic to $\bL(C^*_K(\cX)).$    
  Moreover, $C^*_K(\cX)$ induces $\ker(q^{\cA}_{KH})$ to $\ker(q^{\cB}_{KH}).$ 
\end{theorem}
\begin{proof}
 Under the present hypotheses, the statements and the proofs of Theorem 4.5 and Corollary 4.10 of \cite{AbFrrEquivalence} hold true after we replace every instance of $C^*$ with $C^*_K.$
 One then obtains that $C^*_K(\cX)$ is a $C^*_K(\cA)-C^*_K(\cB)-$equivalence bimodule with the canonical structure inherited from $C^*_K(\bL(\cX)),$ and also that  $C^*_K(\bL(\cX))$ is C*-isomorphic to $\bL(C^*_K(\cX)).$
 
 Consider the following subspaces of $C^*_K(\bL(\cX))=\bL(C^*_K(\cX)):$
 \begin{align*}
M&:=C^*_K(\cA)\oplus C^*_K(\cX) & N&:=C^*_K(\cB)\oplus C^*_K(\cX).
 \end{align*}
 Then $M$ is a $C^*_K(\cA)-C^*_K(\bL(\cX))-$equivalence bimodule with the structure inherited from $C^*_K(\bL(\cX)).$
 Similarly, $N$ is a $C^*_K(\bL(\cX))-C^*_K(\cB)-$equivalence bimodule with the structure inherited from $C^*_K(\bL(\cX)).$  
 The induction of ideals from $C^*_K(\bL(\cX))$ to $C^*_K(\cA)$ via $M$ is easily described as $I\mapsto I\cap C^*_K(\cA).$
 An analogous claim holds for induction via $N.$
 
 By computing the internal tensor product $M\otimes_{C^*_K(\bL(\cX))}N$ using the C*-algebra structure of $C^*_K(\bL(\cX))$  we get that $M\otimes_{C^*_K(\bL(\cX))}N=C^*_K(\cX).$
 So, induction of ideals via $C^*_K(\cX)$ can be decomposed as induction via $M$ and $N.$ 
 Hence, to show that $C^*_K(\cX)$ induces $\ker(q^{\cA}_{KH})$ to $\ker(q^{\cB}_{KH}),$ we only need to show that 
 \begin{equation}\label{equ:induction via M}
   \ker(q^\cA_{KH})=C^*_K(\cA)\cap \ker(q^{\bL(\cX)}_{KH})
 \end{equation}
 because (by symmetry) this would imply that $\ker(q^\cB_{KH})=C^*_K(\cB)\cap \ker(q^{\bL(\cX)}_{KH}).$
 
 We have $q^{\bL(\cX)}_{KH}(C_c(\cA))=C_c(\cA);$ thus $q^{\bL(\cX)}_{KH}(C^*_K(\cA))=C^*_H(\cA).$
 Both $q^\cA_{KH}$ and the restriction $C^*_K(\cA)\to C^*_H(\cA),\ f\mapsto q^{\bL(\cX)}_{KH}(f),$ are the unique *-homomorphism extending the natural inclusion $C_c(\cA)\subset C^*_H(\cB),$ so they must agree.
 This gives $q^\cA_{KH}=q^{\bL(\cX)}_{KH}|_{C^*_K(\cA)},$ which clearly implies \eqref{equ:induction via M}. 
\end{proof}

\begin{corollary}
  Let $\cA$ and $\cB$ be weakly equivalent Fell bundles over $G$ and $H \sbgp K \sbgp G.$
  Suppose at least one of the following conditions is satisfied:
  \begin{enumerate}[(1)]
  	\item\label{item:both bundles saturated} Both $\cA$ and $\cB$ are saturated.
  	\item Every Fell bundle over $G$ satisfies Derighetty's strong condition with respect to $H$ and $K.$
  \end{enumerate}
  Then $\cA$ has the $HK-$WCP if and only if $\cB$ has the $HK-$WCP.
\end{corollary}
\begin{proof}
    If the first condition is fulfilled, then the bundles are strongly equivalent by \cite[Corollary 4.10]{abadie2019morita}.
    Hence, in any of the two cases we can find an $\cA-\cB-$equivalence bimodule $\cX$ satisfying the hypotheses of Theorem~\ref{thm:induction via CKX}.
    Thus $\cA$ has the $HK-$WCP (i.e. $\ker(q^\cA_{KH})=\{0\}$) if and only if $\cB$ has the $HK-$WCP.
\end{proof}

\subsection{A weak (containment) imprimitivity theorem}\label{ssec:a weak imprimitivity theorem}
Let $\langle \regrep{H\cB},\psi^{H\cB}\rangle $ be the universal system of imprimitivity for the Fell bundle $\cB=\{B_t\}_{t\in G}$ over $G/H$ ($H \sbgp G$).
Given any *-representation $R$ of $\cB_H,$  $T:=\Ind_H^\cB(R)\equiv \regrep{H\cB}\otimes_R 1$ is part of the system of imprimitivity $\langle \regrep{H\cB}\otimes_R1,\psi^{H\cB}\otimes_R 1\rangle;$ which is called the system induced by $R.$

Fell's Imprimitivity Theorem \cite[XI 14.17]{FlDr88} gives a necessary and sufficient condition for a system of imprimitivity for $\cB$ over $G/H$ to be, up to a unitary equivalence, an induced system.
This condition holds for every saturated bundle \cite[XI 14.18]{FlDr88} and we ask ourselves what can we say for non saturated bundles.

Consider a system of imprimitivity $\langle S\otimes \Ind_H^G(U),\Psi\rangle$ as in Example~\ref{example:system of imprimitivity}.
For it to be an induced system, $S\otimes \Ind_H^G(U)$ should be induced from a *-representation of $\cB_H.$
But Theorem~\ref{thm:Fells absorption principle I} seems to imply that this is not always the case because $S\otimes \Ind_H^G(U)$ is weakly equivalent to a family of *-representation induced from the reductions $\cB_{tHt^{-1}}$ ($t\in G$).
All these reductions agree if $H$ is normal in $G$ and, moreover, $S\otimes \Ind_H^G(U)$ is weakly equivalent to $\Ind_H^\cB(S|_{\cB_H}\otimes U).$
One may pose the following question:

\begin{question}\label{q:systems of impr and weak equivalence}
Suppose $\cB=\{B_t\}_{t\in G}$ is a Fell bundle, $H \sbgp G$ and $T\colon \cB\to \bB(X)$ is a non degenerate *-representation which is part of a system of imprimitivity $\langle T,\varphi\rangle$ for $\cB$ over $G/H.$
Is $T$ is weakly equivalent to $\Ind_H^\cB(R)$ for some *-representation $R$ of $\cB_H$?
\end{question}

As discussed above, if $\cB$ is saturated, then the $T$ of the question is known to be unitary equivalent to an induced representation.
For general bundles (saturated or not) the best answer we can give is as follows.

\begin{theorem}\label{thm:main about weak containment of systems of imprimitivity}
 Consider the situation of Question~\ref{q:systems of impr and weak equivalence} and assume every Fell bundle over $G$ satisfies Derighetti's strong condition with respect to $H \sbgp G$ (e.g. $H$ is normal, open or has open normaliser in $G$).
 Then $T$ is weakly contained in $\Ind_H^\cB(R)$ for some *-representation $R$ of $\cB_H.$
\end{theorem}
\begin{proof}
	It suffices to show that $\ker (q^\cB_H)\subset \ker(\intform{T}).$
	Indeed, if this is the case, we take for $R$ any non degenerate *-representation of $\cB_H$ with faithful integrated form.
	To prove this is an appropriate choice notice that $\Ind_H^\cB(R)=\regrep{H\cB}\otimes_R 1$ or, in integrated form,  $q^\cB_H\otimes_{\intform{R}}1=\intform{\Ind}_H^\cB(R).$
	Thus $\ker (q^\cB_H)=\ker(\intform{\Ind}_H^\cB(R))$ and, consequently, $T\preceq \Ind_H^\cB(R)\Leftrightarrow \ker (q^\cB_H)\subset \ker(\intform{T}).$
	
We now adopt the notation and constructions of Section 3 of \cite{abadie2019morita}, so the C*-algebra of kernels $\mathbb{k}(\cB)$ is a completion of the *-algebra of compactly supported kernels of $\cB,$ $\mathbb{k}_c(\cB)$.
The canonical action of $G$ on $\bk(\cB)$ will be denoted $\beta$ and $\cB_\beta$ is the semidirect product bundle of $\beta.$
By \cite[Theorem 3.4]{abadie2019morita}, the canonical $L^2-$bundle $\cL^2\cB=\{L^2_e(\cB)\delta_t\}_{t\in G}$ is a $\cB_\beta-\cB-$(weak) equivalence bundle.
A typical element of the fibre $L^2_e(\cB)\delta_t$ is denoted $f\delta_t$ with $f\in L^2_e(\cB).$

By \cite[Proposition 5.5]{Ab03}, there exists a (unique) covariant pair $(\psi,U)$ for $\beta$ such that: $\psi\colon \bk(\cB)\to \bB(L^2(G,X)),$ $U\colon G\to \bB(L^2(G,X))$ and for all $k\in \bk_c(\cB), $ $f\in C_c(G,X)$ and $s,t\in G:$
\begin{align*}
  \psi(k)f& \in C_c(G,X) & (\psi(k)f)(s) &= \int_G T_{k(s,r)}f(r)\, dr\\
  U_tf & \in C_c(G,X) & (U_tf)(s)&=f(st)\Delta(t)^{1/2}.
\end{align*}

We claim that there is a unique *-representation $\hat{\varphi}\colon C_0(G/H)\to \bB(L^2(G,X))$ such that for all $a\in C_0(G/H)),$ $f\in C_c(G,X)$ and $t\in G:$
\begin{align*}
 \hat{\varphi}(a)f& \in C_c(G,X) & (\varphi(a)f)(t)=\varphi(\sigma_t(a))f(t),
\end{align*}
where $\sigma$ is the natural action of $G$ on $C_0(G/H).$
To prove this we start by noticing that given $a\in C_0(G/H)$ and $f\in C_c(G,X),$ the function $a\cdot_\varphi f\colon G\to X,\ t\mapsto \varphi(\sigma_t(a))f(t),$ is continuous and has compact support.
Also,
\begin{equation*}
\| a\cdot_\varphi f \|_2^2 = \int_G \|\varphi(\sigma_t(a))f(t)\|^2\, dt\leq \int_G \|\varphi(\sigma_t(a))\|^2\|f(t)\|^2\, dt\leq \|a\|\|f\|_2^2.
\end{equation*}
Since the expression $a\cdot_\varphi f$ is linear in $f,$ there exists a unique operator $\hat{\varphi}(a)\in \bB(L^2(G,H))$ such that $\hat{\varphi}(a)f=a\cdot_\varphi f$ for all $f\in C_c(G,X).$
We leave to the reader to prove  $\hat{\varphi}\colon C_0(G/H)\to \bB(L^2(G,X))$ is a *-representation.

Using the fact that $\varphi$ is non degenerate, one can show that for every approximate unit $\{a_i\}_{i\in I}$ of $C_0(G/H)$ and every $g\in C_c(G,X),$ the net $\{\hat{\varphi}(a_i)g\}_{i\in G}$ converges uniformly to $g.$
Since the supports of all the elements of the net are contained in the support of $g,$ the net converges to $g$ in $L^2(G,X).$
Thus $\hat{\varphi}$ is non degenerate and, by Stone's Theorem \cite[VI 10.10]{FlDr88}, there exists a unique $L^2(G,X)$-projection-valued Borel measure $P$ on $G/H$ such that $\hat{\varphi}(a)=\int_{G/H} a(x)\, dP(x).$

We claim that the ranges of $\hat{\varphi}$ and $\psi$ commute and that $(\hat{\varphi},U)$ is a covariant pair for $\sigma.$
Indeed, given $a\in C_0(G/H)$ and $k\in \bk_c(\cB),$ for all $f\in C_c(G,X)$ and $t\in G$ we have
\begin{multline*}
(\psi(k)\hat{\varphi}(a)f)(t)
 = \int_G T_{k(t,s)}(\hat{\varphi}(a)f)(s)\, ds
 =  \int_G T_{k(t,s)}\varphi(\sigma_s(a))f(s)\, ds\\
 =  \int_G \varphi(\sigma_{ts^{-1}}\sigma_s(a))  T_{k(t,s)}f(s)\, ds
 = \varphi(\sigma_{t}(a)) \int_G   T_{k(t,s)}f(s)\, ds
 = (\hat{\varphi}(a)\psi(k)f)(t);
\end{multline*}
because $k(t,s)\in B_{ts^{-1}}.$
Then $\psi(k)$ and $\hat{\varphi}(a)$ commute for all $k\in \bk_c(\cB)$ and $a\in C_0(G/H).$
Since $\bk_c(\cB)$ is dense in $\bk(\cB),$ the same conclusion holds for all $k\in \bk(\cB).$

Given $a\in C_0(G/H) $  and $t\in G,$ for all $f\in C_c(G,X)$ and $s\in G$ we have
\begin{align*}
(U_t\hat{\varphi}(a)U_t^*f)(s)
  = \Delta(t)^{1/2}\varphi(\sigma_{st}(a))(U_t^*f)(st)
  =\varphi(\sigma_{st}(a))f(s)
  =(\hat{\varphi}(\sigma_t(a))f)(s);
\end{align*}
which implies $(\hat{\varphi},U)$ is a covariant pair for $\sigma.$

The fibre over $t\in G$ of the semidirect product bundle $\cB_\beta$ is formed by elements $k\delta_t$ with $k\in \bk(\cB).$
With this notation, the function $S\colon \cB_\beta\to \bB(L^2(G,X)),$ $k\delta_t\mapsto \psi(k)U_t,$ is a non degenerate *-representation of $\cB_\beta.$
To prove the pair $\langle S,P\rangle $ is a system of imprimitivity for $\cB_\beta$ over $G/H$ we only need to show that $\psi(k)U_t \hat{\varphi}(a)=\hat{\varphi}(\sigma_t(a))\psi(k)U_t$ for all $k\in \bk(\cB),$ $t\in G$ and $a\in C_0(G/H)$ (see \cite[VIII 18.8]{FlDr88}).
Since $\psi$ is non degenerate, $\langle S,P\rangle $ is a system of imprimitivity if and only if the ranges of $\psi$ and $\hat{\varphi}$ commute and $(\hat{\varphi},U)$ is a covariant pair of $\sigma$ (which we already know).

The semidirect product bundle of any C*-dynamical system is saturated then, by Fell's Imprimitivity Theorem for saturated bundles \cite[XI 14.18]{FlDr88}, $S$ is induced from some *-representation of the reduction of $\cB_\beta$ to $H.$
This yields $\ker( q^{\cB_\beta}_H )\subset \ker(\intform{S}).$
When specializing Theorem~\ref{thm:induction via CKX} to the case $G=K$ and the equivalence bundle $\cX=\cL^2\cB,$ we get that $C^*(\cL^2\cB)$ induces $\ker(q^\cB_H)$ to $\ker( q^{\cB_\beta}_H ).$

Suppose we can show that the *-representation of $C^*(\cB_\beta)$ induced by $\intform{T}$ via $C^*(\cL^2\cB),$ $\Ind_{C^*(\cL^2\cB)}(\intform{T}),$ is a subrepresentation of $\intform{S}.$
Then
\begin{equation*}
	\Ind_{C^*(\cL^2\cB)}(\ker(q^\cB_H)) = \ker( q^{\cB_\beta}_H )\subset \ker(\intform{S})\subset \ker(\Ind_{C^*(\cL^2\cB)}(\intform{T}))
	= \Ind_{C^*(\cL^2\cB)}(\ker(\intform{T})),
\end{equation*}
and it would follow that 
\begin{equation}\label{equ:inclusion of induced kernels}
	\ker(q^\cB_H)\subset \ker(\intform{T})
\end{equation}
 because $C^*(\cL^2\cB)$ is a $C^*(\cB_\beta)-C^*(\cB)$-equivalence bimodule \cite{AbFrrEquivalence,abadie2019morita}; which would compete the proof.

The induced Hilbert space $C^*(\cL^2\cB)\otimes_{\intform{T}}X$ is the closed linear span of tensor products $f\otimes_{\intform{T}} \xi$ with $f\in C_c(\cL^2\cB)$ and $\xi\in X.$
To simplify our arguments we will use a subspace of  $C_c(\cL^2\cB)$ that is still dense in $C^*(\cL^2\cB).$

Let $\Gamma$ be the set of all continuous functions $f\colon G\times G\to \cB$ of compact support such that $f(s,t)\in B_t$ for all $(s,t)\in G\times G.$
We may think of $\Gamma$ as $C_c(\cD)$ for the retraction $\cD$ of $\cB$ by the map $G\times G\to G,(s,t)\mapsto t$ (as defined in \cite[II 13.3]{FlDr88}).
Recall from \cite{FlDr88} that $\cD$ is a Banach bundle, so for every $(s,t)\in G$ and $b\in B_t$ there exists $f\in \Gamma$ such that $f(s,t)=b.$

For each $f\in \Gamma$ we define $f|\colon G\to C_c(\cB),\ s\mapsto f|_s,$ where $f|_s\in C_c(\cB)$ is given by $f|_s(t)=f(s,t).$
Then $f|$ is continuous in the inductive limit topology and with respect to the norm topology of $L^2_e(\cB).$
Thus $f|\delta\colon G\to \cL^2\cB,\ t\mapsto f|_t\delta_t,$ is a continuous section of compact support.

Given $t\in G$ and $g\in C_c(\cB)\subset L^2_e(\cB),$ we can use \cite[II 14.8]{FlDr88} to prove the existence of a function $f\in \Gamma$ such that $f(t,s)=g(s)$ for all $s\in G.$
Thus $(f|\delta)(t)=g\delta_t.$
Since $\Gamma|\delta:=\{f|\delta\colon f\in \Gamma\}$ is a linear subspace of $C_c(\cL^2\cB)$ closed under pointwise multiplication by continuous complex functions of $G,$ \cite[II 14.6]{FlDr88} implies $\Gamma|\delta$ is dense in $C_c(\cL^2\cB)$ in the inductive limit topology.
This last topology is larger than the $\|\ \|_1-$topology and, consequently, also larger than the norm topology given by the inclusion  $C_c(\cL^2\cB)\subset C^*(\cL^2\cB)$ (recall $C^*(\cL^2\cB)$ is the closure of $C_c(\cL^2\cB)$ in $C^*(\bL(\cL^2\cB))$).
Then $\Gamma|\delta$ is dense in $C^*(\cL^2\cB)$ and $C^*(\cL^2\cB)\otimes_{\intform{T}}X$ is the closed linear span of
\begin{equation*}
\{(f|\delta)\otimes_{\intform{T}} \xi\colon f\in \Gamma,\xi\in X\}.
\end{equation*}
 
Given $f\in \Gamma$ and $\xi\in X,$ the function $G\times G\to X,\ (s,t)\mapsto T_{f(s,ts)}\xi,$ is continuous and has compact support.
Then $f\cdot_T\xi\colon G\to X,$ defined by
\begin{equation*}
	f\cdot_T \xi(t) = \int_G T_{f(s,ts)}\xi\, ds,
\end{equation*}
is a continuous function of compact support (and so an element of $L^2(G,X)$).

Fix $f,g\in \Gamma$ and $\xi,\eta\in X.$
To compute the inner product $\langle f|\delta,g|\delta\rangle_{C^*(\cB)} $ it is convenient to think $f|\delta$ and $g|\delta$ as elements of $C_c(\bL(\cL^2\cB))\subset C^*(\bL(\cL^2\cB)).$
Then $\langle f|\delta,g|\delta\rangle_{C^*(\cB)}\in C_c(\cB)$ is the convolution $(f|\delta)^**(g|\delta)$ computed in $C_c(\bL(\cL^2\cB)):$
\begin{equation*}
\langle f|\delta,g|\delta\rangle_{C^*(\cB)}(t)
=\int_G \langle f|_s\delta_s,g|_{st}\delta_{st}\rangle_\cB \, ds
=\int_G\int_G  f(s,rs)^*g(st,rst) \, drds
\end{equation*}

By the definition of the inner products of $L^2(G,X)$ and $C^*(\cL^2\cB)\otimes_{\intform{T}}X$ we have
\begin{multline*}
   \langle (f|\delta)\otimes_{\intform{T}}\xi,(g|\delta)\otimes_{\intform{T}}\eta\rangle
   = \langle \xi,\intform{T}_{\langle f|\delta,g|\delta\rangle_{C^*(\cB)}}\rangle
     = \int_G\int_G\int_G \langle T_{ f(s,rs)}\xi,T_{g(st,rst)} \eta\rangle\, drdsdt\\
     = \int_G\int_G\int_G \langle T_{ f(s,rs)}\xi,T_{g(t,rt)} \eta\rangle\, dtdsdr
     = \langle f\cdot_T\xi,g\cdot_T\eta\rangle.
\end{multline*}
Then there exists a unique linear isometry
\begin{equation*}
	V\colon C^*(\cL^2\cB)\otimes_{\intform{T}}X\to \bB(L^2(G,X))
\end{equation*}
mapping $(f|\delta)\otimes_{\intform{T}}\xi$ to $f\otimes_T\xi,$ for all $f\in \Gamma$ and $\xi\in X.$

The canonical representation $\regrep{\bL(\cL^2\cB)}\colon \bL(\cL^2\cB)\to \bB(C^*(\bL(\cL^2\cB)))$ can be restricted to get a non degenerate *-representation 
\begin{equation*}
  \Omega\colon \cB_\beta\to \bB(C^*(\cL^2\cB)) \qquad \Omega_u g=\regrep{\bL(\cL^2\cB)}_ug.
\end{equation*}
Naturally, the integrated form of $\Omega$ can be computed using the integrated form of $\regrep{\bL(\cL^2\cB)}$ (i.e. $q^{\bL(\cL^2\cB)}$).
Then $\intform{\Omega}_fg=f*g$ for all $f\in C_c(\cB_\beta)$ and $g\in C_c(\cL^2\cB).$

The integrated form of the tensor product representation 
\begin{equation*}
\Omega\otimes_{\intform{T}}1\colon \cB_\beta\to \bB(C^*(\cL^2\cB)\otimes_{\intform{T}}X)
\end{equation*}
is precisely $\Ind_{C^*(\cL^2\cB)}(\intform{T}).$
Hence, to show 
\begin{equation}\label{equ:integrated form of V ind equals S V}
	V\Ind_{C^*(\cL^2\cB)}(\intform{T})_f = \intform{S}_fV
\end{equation}
for all $f\in C^*(\cB_\beta)\equiv \bk(\cB)\rtimes_\beta G,$ it suffices to prove  that 
\begin{equation}\label{equ:V ind equals S V}
V(\Omega\otimes_{\intform{T}}1)_{k\delta_t}(  (g|\delta)\otimes_{\intform{T}}\xi)=  S_{k\delta_t}V(  (g|\delta)\otimes_{\intform{T}}\xi)
\end{equation}
for all $k\in \bk_c(\cB),$ $t\in G,$ $g\in \Gamma$ and $\xi\in X.$

Fix $k,t,g$ and $\xi.$
Then $(\Omega\otimes_{\intform{T}}1)_{k\delta_t}(  (g|\delta)\otimes_{\intform{T}}\xi)=(\Omega_{k\delta_t}  (g|\delta))\otimes_{\intform{T}}\xi.$
Since $g|\delta\in C_c(\cL^2\cB),$ we can compute $\Omega_{k\delta_t}  (g|\delta)\in C_c(\cL^2\cB)$ as
\begin{equation*}
\Omega_{k\delta_t}  (g|\delta)(s)
= (k\delta_t)(g|_{t^{-1}s}\delta_{t^{-1}s})
=\Delta(t)^{1/2} \beta_{s^{-1}}(k)g|_{t^{-1}s}\delta_s.
\end{equation*} 

By the definition of the right $\cB_\beta-$bundle structure of $\cL^2\cB,$ the expression $\beta_{s^{-1}}(k)g|_{t^{-1}s}$ refers to the natural representation of $\bk(\cB)$ on $L^2_e(\cB).$
Then 
\begin{equation*}
	[\beta_{s^{-1}}(k)g|_{t^{-1}s}](r)
	=\int_G \Delta(s)^{-1}k(rs^{-1},us^{-1})g(t^{-1}s,u)\, du
	=\int_G k(rs^{-1},u)g(t^{-1}s,us)\, du
\end{equation*}

One way of showing that the function 
\begin{align*}
   k\diamond_t g &\colon G\times G\to \cB & k\diamond_t g(s,r):=\Delta(t)^{1/2} \int_G k(rs^{-1},u)g(t^{-1}s,us)\, du
\end{align*}
is continuous and has compact support is as follows.
Let $A$ be a compact subset of $G$ such that $A\times A $ contains the supports of $k$ and $g.$
Then $\nu\colon G\times G\times G\to \cB,$ $\nu(r,s,u)=k(rs^{-1},u)g(t^{-1}s,us),$ is continuous and its support is contained in the compact $B:=(AtA)\times (tA)\times A.$
Besides, $\nu(r,s,t)\in B_r.$
Let $\cD$ be the retraction of $\cB$ by $G\times G\to G,(r,s)\mapsto r,$ and $E$ the (Banach) space of continuous cross sections of $\cD$ supported in $(AtA)\times (tA);$ the norm of $E$ being $\|\ \|_\infty.$
Then $\nu|\colon G\to E,$ $\nu|(u)(r,s)=\nu(r,s,u),$ is continuous and has compact support, so its integral $\int_G \nu|(u)\, du$ is an element of $E$ such that, for all $(r,s)\in G\times G:$
\begin{equation*}
\Delta(t)^{1/2}\left(\int_G \nu|(u)\, du\right) (r,s)=\Delta(t)^{1/2}\int_G \nu(r,s,u)\, du=k\diamond_tg(r,s).
\end{equation*}
By the definition of the retraction of a bundle, $k\diamond_tg$ is continuous (and has compact support).

Our definition of $k\diamond_tg$ gives the identity $\Omega_{k\delta_t}  (g|\delta)= (k\diamond_tg)|\delta,$ so
\begin{equation*}
V(\Omega\otimes_{\intform{T}}1)_{k\delta_t}(  (g|\delta)\otimes_{\intform{T}}\xi)
= V(\Omega_{k\delta_t}  (g|\delta))\otimes_{\intform{T}}\xi
=(k\diamond_tg)\cdot_T\xi.
\end{equation*}
By construction,
\begin{multline*}
(k\diamond_tg)\cdot_T\xi(r)
=\int_G T_{(k\diamond_tg)(s,rs)}\xi\, ds
=\Delta(t)^{1/2}\int_G\int_G  T_{k(r,u)g(t^{-1}s,us)}\xi\, duds=\\
=\Delta(t)^{1/2}\int_GT_{k(r,u)} \int_G  T_{g(t^{-1}s,us)}\xi\, dsdu
=\int_GT_{k(r,u)} \Delta(t)^{1/2}\int_G  T_{g(s,uts)}\xi\, dsdu =\\
=\int_GT_{k(r,u)} [U_t(g\cdot_T\xi)](u) du
= \psi(k)U_t(g\cdot_T\xi)
=S_{k\delta_t}V(g|\delta)\otimes_{\intform{T}}\xi.
\end{multline*}

This completes the proof of \eqref{equ:V ind equals S V}, which is equivalent to \eqref{equ:integrated form of V ind equals S V} and thus we obtain \eqref{equ:inclusion of induced kernels}, completing the proof.
\end{proof}

The result below is a (weak) combination of  Fell's Imprimitivity Theorem for saturated bundles \cite[XI 14.18]{FlDr88}, Theorem 2.14 and Corollary 2.15 from \cite{ExNg} and, finally, the last theorem we proved above.

\begin{corollary}\label{cor:reps that are part of s system of imprimitivity}
 Let $\cB$ be a Fell bundle over $G,$ $H \sbgp G$ and $T\colon \cB\to \bB(X_A)$ a *-representation which is part of a system of imprimitivity $\langle T,\varphi\rangle$ for $\cB$ over $G/H.$
 If $\cB$ is saturated (and/)or every Fell bundle over $G$ satisfies Derighetti's strong condition with respect to $H,$ then there exists a unique *-representation $\pi_T$ making 
 \begin{equation*}
 \xymatrix{ C^*(\cB)\ar[rr]^{\intform{T}}\ar[rd]_{q^\cB_H} & & \bB(X_A)\\
 	& C^*_H(\cB)\ar[ur]_{\pi_T} & }
 \end{equation*}
 a commutative diagram.
\end{corollary}
\begin{proof}
Take a faithful and non degenerate *-representation $\rho\colon A\to \bB(Y).$
Then $\langle T\otimes_\rho 1,\varphi\otimes_\rho 1\rangle $ is a system of imprimivitivity for $\cB$ over $G/H.$
If $\cB$ is saturated, then Fell's Imprimitivity Theorem for saturated bundles implies $T\otimes_\rho 1$ is unitary equivalent to $\Ind_H^\cB(R)=\regrep{H\cB}\otimes_R 1$ for some *-representation $R\colon \cB_H\to \bB(Y).$
In case $\cB$ is not saturated, we are under the hypotheses of Theorem~\ref{thm:main about weak containment of systems of imprimitivity}.
In any of the two situations we can assume $T\otimes_\rho 1$ is weakly contained in $\Ind_H^\cB(R)=\regrep{H\cB}\otimes_R 1$ for some *-representation $R\colon \cB_H\to \bB(Y).$
For all $f\in C^*(\cB)$ we have $\|\intform{T}_f\|\leq \|\intform{\Ind}_H^\cB(R)_f\|\leq \|\intregrep{H\cB}_f\|=\|q^\cB_H(f)\|.$
Then we can define $\pi_T$ by setting $\pi_T(q^\cB_H(f)):=\intform{T}_f.$
\end{proof}

\section{C*-completions of Banach *-algebraic bundles}\label{sec:completions}

In this section we relate the induction of representations with the construction of C*-completions of Banach *-algebraic bundles.
To do this we introduce two definitions.

\begin{definition}
 A collection of maps $\rho=\{\rho_t\colon A_t\to B_t\}_{t\in G}$ is a morphism of Banach *-algebraic bundles from $\cA=\{A_t\}_{t\in G}$ to $\cB=\{B_t\}_{t\in G}$ if
 \begin{enumerate}
  \item For all $t\in G,$ $\rho_t$ is linear.
  \item The disjoint union $\rho\colon \cA\to \cB$ of the maps $\{\rho_t\}_{t\in G}$ is continuous.
  \item For all $a,b\in \cA,$ $\rho(ab)=\rho(a)\rho(b)$ and $\rho(a^*)=\rho(a)^*.$
  \item There exists $M\in [0,+\infty)$ such that $\|\rho(a)\|\leq M\|a\|$ for all $a\in \cA.$
 \end{enumerate}
\end{definition}

If the bundle $\cB$ above is a Fell bundle, then $\rho_e\colon A_e\to B_e$ is contractive because it is a map from a Banach *-algebra to a C*-algebra.
Hence, for all $a\in \cA$ we have $\|\rho(a)\|=\|\rho(a)^*\rho(a)\|^{1/2}=\|\rho(a^*a)\|^{1/2}\leq \|a^*a\|^{1/2}=\|a\|.$
Thus condition (4) holds for $M=1.$

\begin{definition}
 A C*-completion of a Banach *-algebraic bundle $\cA=\{A_t\}_{t\in G}$ is a pair $(\cB,\rho)$ such that 
  $\cB=\{B_t\}_{t\in G}$ is a Fell bundle; $\rho=\{\rho_t\colon A_t\to B_t\}_{t\in G}$ a morphism of Banach *-algebraic bundles and $\rho_t(A_t)$ is dense in $B_t$ for all $t\in G.$
 Given another C*-completion of $\cA,$ $(\cC,\gamma),$ a morphism from $(\cB,\rho)$ to $(\cC,\gamma)$ is a morphism of Banach *-algebraic bundles $\nu=\{\nu_t\colon B_t\to C_t\}_{t\in G}$ such that $\nu\circ \rho = \gamma$ (i.e. $\nu_t\circ \rho_t=\gamma_t$ for all $t\in G$). 
\end{definition}

\begin{example}
 Let $\cB$ be a Banach *-algebraic bundle and $\cC$ the bundle C*-completion of $\cB,$ as constructed in \cite[VIII 16.7]{FlDr88}.
 We also use the map $\rho\colon \cB\to \cC$ constructed there to conclude $(\cC,\rho)$ is a C*-completion.
\end{example}

We now use the ideas of \cite[VIII 16.7]{FlDr88} to get the following.

\begin{proposition}\label{prop:completions and representations}
 Let $\cB=\{B_t\}_{t\in G}$ be a Banach *-algebraic bundle and $T\colon \cB\to \bB(Y)$ a *-representation.
 Then there exists a C*-completion $(\cB^T,\rho^T)$ of $\cB$ such that $\|\rho^T(b)\|=\|T_b\|$ for all $b\in \cB.$
 Moreover,
 \begin{enumerate}
  \item Any C*-completion of $\cB$ is isomorphic to one of the form $(\cB^T,\rho^T).$
  \item Given another *-representation $S\colon \cB\to \bB(Z),$ there exists a morphism 
  \begin{equation*}
   \rho^{TS}\colon (\cB^T,\rho^T)\to (\cB^S,\rho^S)
  \end{equation*}
  if and only if $\|S_b\|\leq \|T_b\|$ for all $b\in B_e.$
  If this last condition holds, then there is a unique morphism from $ (\cB^T,\rho^T)$ to $ (\cB^S,\rho^S).$
  \item  $ (\cB^T,\rho^T)$ is isomorphic to $ (\cB^S,\rho^S)$ if and only if $\|T_b\|=\|S_b\|$ for all $b\in B_e.$
 \end{enumerate}
 \begin{proof}
  The construction of $(\cB^T,\rho^T)$ can be performed following \cite[VIII 16.7]{FlDr88} but using the function $\cB\to [0,+\infty)$ given by $b\mapsto \|T_b\|$ instead of the function $\|\ \|_c$ used in \cite[VIII 16.7]{FlDr88}.
  It is implicit in this construction that the set of sections $\Gamma_T:=\{\rho^T\circ f\colon f\in C_c(\cB)\} \subset C_c(\cB^T)$ determines the topology of $\cB^T$ in the sense of \cite[II 13.18]{FlDr88}.
  Moreover, that set of sections is dense in the inductive limit topology in $C_c(\cB^T)$ by \cite[II 14.6]{FlDr88}.
  
  Take a C*-completion $(\cC,\gamma)$ of $\cB.$
  Then $\cC$ is it's own bundle C*-completion by \cite[VIII 16.10]{FlDr88}; this means there exists a *-representation $R\colon \cC\to \bB(Z)$ such that $\|R_c\|=\|c\| $ for all $c\in C_e.$
  Thus, for all $c\in \cC,$ $\|c\|= \|c^*c\|^{1/2}=\|R_c^*R_c\|^{1/2}=\|R_c\|.$
  
  Set $R':=R\circ \gamma\colon \cB\to \bB(Y)$ and form the C*-completion $(\cB^{R'},\rho^{R'}).$
  For every $t\in G$ the fibre $B^{R'}_t$ over $t$ of $\cB^{R'}$ is constructed as the completion of $B_t$ with respect to the seminorm $B_t\to [ 0,+\infty),$ $b\mapsto \|R'_b\|=\|R_{\rho(b)}\|=\|\rho(b)\|.$ 
  Since $\rho(B_t)$ is dense in $C_t,$ there exists a unique surjective linear isometry $\nu_t\colon B^{R'}_t\to C_t$ such that $\nu_t\circ \rho^{R'}_t = \gamma.$
  We claim that $\nu:=\{\nu_t\colon B^{R'}_t\to C_t\}_{t\in G}$ is an isomorphism of C*-completions. 
  
  Notice that for all $f=\rho^{R'}\circ g\in \Gamma_{R'}$ we have $\nu\circ f = \gamma\circ g\in C_c(\cC).$
  Since each $\nu_t$ is a bijective linear isometry, the two Propositions of \cite[II 13.16]{FlDr88} imply both $\nu\colon \cB^{R'}\to \cC$ and it's inverse $\nu^{-1}\colon \cC\to \cB^{R'}$ are continuous.
  We have, for all $a,b\in \cB,$ $\nu( \rho^{R'}(a)\rho^{R'}(b) ) = \gamma(ab)=\gamma(a)\gamma(b) = \nu( \rho^{R'}(a))\nu(\rho^{R'}(b) ).$
  Since $\rho(R')(\cB)$ is dense in $\cB^{R'}$ and $\nu$ is continuous, it follows that $\nu$ preserves multiplication. 
  A similar argument implies $\nu(a)^*=\nu(a)$ for all $a\in \cB^{R'}.$
  Then $\nu$ is an isomorphism, which shows (1).
  
  Assume, as in (2), $S\colon \cB\to \bB(Z)$ is a *-representation and that there exists a morphism $\rho^{TS}\colon  \cB^T\to \cB^{S}.$
  Such a morphism must be contractive and, consequently, for all $b\in \cB $ we have $\|S_b\|= \|\rho^S(b)\|=\|\rho^{TS}(\rho^T(b))\|\leq \|\rho^T(b)\|=\|T_b\|.$
  
  To prove the converse in (2) we first notice that for all $b\in \cB,$ $\|S_b\|= \|S_{b^*b}\|^{1/2}\leq \|T_{b^*b}\|^{1/2} = \|T_b\|.$
  Then the morphism $\rho^{TS}\colon \cB^T\to \cB^S$ can be constructed using the same arguments we used to construct the isomorphism $\nu.$
  When doing so notice that to show $\nu$ is continuous we don't really need it to be isometric, just contractive suffices.
  
  Finally, claim (3) follows easily from (2).
  \end{proof} 
\end{proposition}

\begin{corollary}\label{cor:uniqueness of completion}
 Let $\cB=\{B_t\}_{t\in G}$ be a Banach *-algebraic bundle and consider two of its  C*-completions, $(\cA,\iota)$ and $(\cC,\kappa).$
 Then there exists a morphism $\rho\colon (\cA,\iota)\to (\cC,\kappa)$ if and only if $\|\kappa(b)\|\leq \|\iota(b)\|$ for all $b\in B_e.$
 In fact the morphism is unique (if it exists) and it is an isomorphism if and only if $\|\iota(b)\|=\|\kappa(b)\|$ for all $b\in B_e.$
\begin{proof}
 By Proposition~\ref{prop:completions and representations} we may assume $(\cA,\iota)=(\cB^S,\rho^S)$ and $(\cC,\kappa)=(\cB^T,\rho^T)$ for some representations $S$ and $T$ of $\cB.$
 After this identification the statement becomes claim (3) of Proposition~\ref{prop:completions and representations}.
\end{proof}
\end{corollary}

Notice that the Corollary above implies that any C*-completion $(\cA,\iota)$ of $\cB$ is completely determined by the completion $(A_e,\iota_e)$ of $B_e.$
We want to know which are the C*-completions of $B_e$ arising in this way.
This problem can be thought of a generalisation of the problem considered in \cite[XI 11.2]{FlDr88} of determining if the unit fibre of the bundle C*-completion of $\cB$ equals the enveloping C*-algebra of $B_e.$

\begin{definition}\label{defi:rho completion of bundle}
 Given a Banach *-algebraic bundle $\cB=\{B_t\}_{t\in G}$ and a C*-completion $\rho\colon B_e\to A,$ a $\rho-$completion of $\cB$ is a C*-completion $(\cA,\iota)$ such that $\iota_e=\rho.$ 
\end{definition}

We now combine the induction process with the existence of particular C*-completions of Banach *-algebraic bundles.

\begin{theorem}\label{thm:equivalent conditions for the existence of completions}
 Let $\cB= \{B_t\}_{t\in G}$ be a Banach *-algebraic bundle and let $\pi\colon B_e\to A$ be a C*-completion, which we regard as a *-representation $\pi\colon B_e\to A\subset \bB(A).$
 Then the following are equivalent:
 \begin{enumerate}
  \item There exists a $\pi-$completion of $\cB.$
  \item $\pi(b^*b)\geq 0$ and and $\pi(c^*b^*bc)\leq \|\pi(b^*b)\|\pi(c^*c)$ for all $b,c\in \cB.$
  \item There exists a $\cB-$positive *-representation $T\colon B_e\to \bB(Y)$ such that for all $b\in B_e$ we have $\|\Ind_{\{e\}}^\cB(T)_b\|\leq \|T_b\|=\|\pi(b)\|.$
  \item There exists a $\cB-$positive *-representation $T\colon B_e\to \bB(Y)$ such that for all $b\in B_e$ we have $\|\Ind_{\{e\}}^\cB(T)_b\|= \|T_b\|=\|\pi(b)\|.$
  \item There exists a non degenerate *-representation $T\colon \cB\to \bB(Y)$ such that $\|\pi(b)\|=\|T_b\|$ for all $b\in B_e.$
 \end{enumerate}
Moreover, all the conditions above imply 
 \begin{enumerate}
  \item[(6)] $0\leq \pi(c^*b^*bc)\leq \|\pi(b^*b)\|\pi(c^*c)$ for all $b,c\in \cB$
 \end{enumerate}
 and the converse holds provided that $\cB$ has a strong approximate unit.
\begin{proof}
 Assume (1) holds and take a C*-completion $(\cA,\iota)$ such that $\pi=\iota_e.$
 Then for all $b\in \cB$ we have $\pi(b^*b)=\iota(b)^*\iota(b)\geq 0$ because $\cA$ is a Fell bundle.
 If we replace the topology of $G$ by the discrete one, then $\cA$ becomes a Fell bundle over a discrete group and using \cite[Lemma 17.2]{Exlibro} we deduce that $a^*b^*ba\leq \|b^*b\|a^*a$ (in $A_e$) for all $a,b\in \cA$ (no matter which topology we consider on $G$).
 Thus for all $b,c\in \cB$ we have $\pi(c^*b^*bc)=\iota(c)^*\iota(b)^*\iota(b)\iota(c)\leq \|\iota(b^*b)\|\iota(c)^*\iota(c)=\|\pi(b^*b)\|\pi(c^*c).$
 Hence (2) holds.
 
 Assume (2) and take a non non degenerate and faithful representation $\rho\colon A\to \bB(Y).$
 If we set $T:=\rho\circ \pi\colon B_e\to \bB(Y),$ then $\|T_b\|=\|\pi(b)\|$ for all $b\in B_e.$
 Theorem~\ref{thm:characterization of positive representations} implies $T$ is $\cB-$positive.
 To prove the inequality in (3) we may use the abstract induction process.
 
 The Hilbert space induced by $T,$ $Z,$ is obtained by completing the algebraic tensor product $C_c(\cB)\odot Y$ with respect to the pre-inner product given by
 \begin{equation*}
  \langle f\odot \xi,g\odot \eta\rangle =\langle \xi,T_{f^**g(e)}\eta\rangle =\int_G \langle \xi,T_{f(t)^*g(t)}\eta\rangle\, dt=
  \int_G \langle \xi,\rho\circ \pi(f(t)^*g(t))\eta\rangle\, dt.
 \end{equation*}
 We denote $f\otimes_T \xi$ the image of $f\odot \xi$ in $Z.$ 

 Notice that for all $f,g\in C_c(\cB), $ $b\in B_e$ and $\xi,\eta\in Y$ it follows that 
 \begin{equation*}
  \langle f\otimes_T T_b\xi,g\otimes_T\eta\rangle 
  =\int_G \langle \xi,\rho\circ \pi((f(t)b)^*g(t))\eta\rangle\, dt
  = \langle fb\otimes_T \xi,g\otimes_T\eta\rangle.
 \end{equation*}
 Thus $f\otimes_T T_b\xi = fb\otimes_T \xi.$
  
 Take a decomposition of $\rho$ into cyclic subrepresentations $\rho=\bigoplus_{i\in I} \rho^i,$ with $\{\xi_i\}_{i\in I}$ being the corresponding family of cyclic vectors.
 Since the vectors of the form $f\otimes_T T_b \xi_i = fb\otimes_T\xi_i$ span a dense subset of $Z;$ the finite direct sums $\sum_{j=1}^n f_j\otimes_T \xi_{i_j}$ with $i_j\neq i_k$ if $j\neq k$ form a dense subset of $Z.$
 If $\zeta = \sum_{j=1}^n f_j\otimes_T \xi_{i_j}$ is one of such sums and $b\in \cB,$ then condition (2) implies
 \begin{align*}
  \|\Ind_e^\cB(T)_b \zeta \|^2
  & =  \int_G \sum_{j,k=1}^n \langle \xi_{i_j},\rho\circ \pi(f_j(t)^* b^*b f_k(t))\xi_{i_k}\rangle \, dt\\
  & =  \int_G \sum_{j=1}^n \langle \xi_{i_j},\rho\circ \pi(f_j(t)^* b^*b f_j(t))\xi_{i_j}\rangle \, dt\\
  & \leq  \|\pi(b)\|^2 \int_G \sum_{j=1}^n \langle \xi_{i_j},\rho\circ \pi(f_j(t)^* f_j(t))\xi_{i_j}\rangle \, dt
  \leq \|T_b\|^2  \|\zeta\|^2. 
 \end{align*}
 It thus follows that $\|\Ind_e^\cB(T)_b\|\leq \|T_b\|=\|\pi(b)\|$ for all $b\in B_e.$

 Claim (3) implies (4) because \cite[XI 11.3]{FlDr88} guarantees that $\|T_b\|\leq \|\Ind_e^\cB(T)_b\|$ for all $b\in B_e.$
 Notice (4) implies (5) taking $\Ind_{\{e\}}^\cB(T)$ as the representation of (5).
 
 Assume (5) holds and construct the C*-completion $(\cB^T,\rho^T)$ as in Proposition~\ref{prop:completions and representations}.
 Then the unit fibre of $B^T$ is C*-isomorphic to $A$ and by replacing this fibre with $A$ (and $\rho^T_e$ with $\pi$) we get a C*-completion $(\cA,\iota)$ with $\iota_e=\pi.$
 Thus (5) implies (1).
 
 Notice that (2) implies (6).
 Conversely, if $\cB$ has a strong approximate unite $\{u_i\}_{i\in I},$  then for all $b\in \cB$ we have $ \pi(b^*b)=\lim_i \pi(b^* u_i^* u_i b)\geq 0.$
\end{proof}
\end{theorem}

\begin{remark}[Universal property of the bundle C*-completion]\label{rem:universal property of bundle C*-completion}
 Let $\cB=\{B_t\}_{t\in G}$ be a Banach *-algebraic bundle and consider it's bundle C*-completion $(\cC,\iota).$
 For any other C*-completion $(\cA,\kappa)$ there exists a *-representation $T\colon \cA\to \bB(Z)$ such that $\|T_a\|=\|a\|$ for all $a\in \cA.$
 Consequently, $T\circ \kappa$ is a *-representation of $\cB$ and the construction of $\cC$ implies $\|\kappa(b)\|=\|T\circ\kappa(b)\|\leq \|\iota(b)\|$ for all $b\in B_e.$
 Then Corollary~\ref{cor:uniqueness of completion} implies the existence of a unique morphism $\rho\colon \cC\to \cA.$
 So the bundle C*-completion is the universal C*-completion of $\cB.$
\end{remark}

The following result is a generalisation of Proposition~\ref{prop:can change a banach algebraic bundle by its completion}.

\begin{proposition}\label{prop:induciton and completions}
Let $\cB$ be a Banach *-algebraic bundle over $G,$ $H \sbgp G$ and $(\cC,\rho)$ a C*-completion of $\cB.$
We say a *-representation $T\colon \cB_H\to \bB(X)$ is dominated by $\rho$ if $\|T_b\|\leq \|\rho(b)\|$ for all $b\in B_e.$
Then

\begin{enumerate}
 \item Every *-representation of $\cB_H$ dominated by $\rho$ is $\cB-$positive.
 \item\label{item:characterization of dominated reps} For every *-representation $T\colon \cB_H\to \bB(X)$ dominated by $\rho$ there exists a unique *-representation $T^\rho\colon \cC_H\to \bB(X)$ such that $T=T^\rho\circ (\rho|_{\cB_H}).$
 Reciprocally, for every *-representation $S\colon \cC_H\to \bB(Z)$ the composition $T:=S\circ (\rho|_{\cB_H})\colon \cB_H\to \bB(Z)$ is dominated by $\rho$ and $T^\rho =S.$
 \item For every *-representation $T\colon \cB_H\to \bB(X)$ dominated by $\rho,$ $\Ind_H^\cB(T)$ is dominated by $\rho$ and we have a unitary equivalence ($\approx$) of *-representations
 \begin{equation*}
 (\Ind_H^\cB(T))^\rho \approx  \Ind_H^\cC(T^\rho).
 \end{equation*}
\end{enumerate}
\end{proposition}
\begin{proof}
   Let $T\colon \cB_H\to \bB(X)$ be a $*-$representation dominated by $\rho.$
   To prove that $T$ is $\cB-$positive we may assume, without loss of generality, that $T$ is non degenerate.
   
   Take a non degenerate *-representation $R\colon \cC\to \bB(Y)$ such that $R|_{B_e}$ is faithful.
   Then, for all $b\in B_e,$ $\|T_b\|\leq \|\rho(b)\|=\|R\circ \rho (b)\|.$
   This means $T|_{B_e}\colon B_e\to \bB(X)$ is weakly contained in $R\circ \rho|_{B_e}\colon B_e\to \bB(Y),$ the last being a $\cB-$positive *-representation of $\cB$ (because it is a restriction of a *-representation of $\cB$).
   By \cite[XI 8.19]{FlDr88}  $T|_{B_e}$ is $\cB-$positive and this implies $T$ is $\cB-$positive (Theorem~\ref{thm:characterization of positive representations}).
   
   For every $t\in G$ and $b\in B_t$ we have $\|T_b\|=\|T_{b^*b}\|^{1/2}\leq \|\rho(b^* b)\|^{1/2}=\|\rho(b)\|.$
   Since $\rho(B_t)$ is a dense subspace of $C_t,$ there exists a unique linear function $T^{\rho t}\colon C_t\to \bB(X)$ such that $T^{\rho t}\circ \rho|_{B_t}=T|_{B_t}.$
   Let $T\colon \cB_H\to \bB(X)$ be the union of the maps $\{T^{\rho t}\}_{t\in H}.$
   A straight forward density + continuity argument shows that $T^\rho$ is multiplicative and preserves adjoints.
   
   We now want to show that for all $\xi,\eta\in X$ the map $\cC_H\to \bC,\ c\mapsto \langle \xi,T_c\eta\rangle,$ is continuous.
   To do this we use \cite[II 13.16]{FlDr88}.
   Let $\cD$ be the trivial Banach bundle over $H$ with constant fibre $\bC$ and define $\psi\colon \cC_H\to \cD$ by $\psi(c)=(t,\langle \xi,T^\rho_c\eta\rangle),$ for all $c\in C_t$ and $t\in H.$
   Then $\psi$ is linear on each fibre and $|\psi(c)|\leq \|\xi\|\|\eta\|\|c\|$ for all $c\in \cC_H.$
   The family $\Gamma:=\{\rho\circ f\colon f\in C_c(\cB_H)\}$ satisfies condition (iii) of \cite[II 13.16]{FlDr88}; it also satisfies (iv) because for all $\rho\circ f \in \Gamma$ the composition $\psi( \rho\circ f(t) )=(t,\langle \xi,T_{f(t)}\eta\rangle)$ is continuous.
   Thus $\psi$ is continuous, implying that $T^\rho$ is a *-representation of $\cC_H.$
   
   If $R\colon \cC_H\to \bB(X)$ is a *-representation such that $R\circ (\rho|_{\cB_H})=T,$ then $R$ and $T^\rho$ agree on the dense set $\rho(\cB_H).$
   Hence $R=T^\rho.$
   This shows the first half of claim (\ref{item:characterization of dominated reps}); the second half is left to the reader.
   
   To prove the last claim we fix a *-representation $T\colon \cB_H\to \bB(X)$ dominated by $\rho.$
   We denote $X^p_T$ the Hilbert space induced by $T$ via the generalized restriction $p=p^\cB_H\colon C_c(\cB)\to C_c(\cB_H).$
   The Hilbert space induced by $T^\rho$ should be denoted $X^p_{T^\rho}$ but we prefer the more specific term $L^2_H(\cC)\otimes_{T^\rho}X$ instead.
   
   To prove the existence of a bounded operator $U\colon X^p_T\to L^2_H(\cC)\otimes_{T^\rho}X$
    such that $U(f\otimes_T\xi)=\rho\circ f\otimes_{T^\rho}\xi$ (for all $f\in C_c(\cB)$ and $\xi\in X$) it suffices to show that 
    \begin{equation*}
    	\langle f\otimes_{T}\xi,g\otimes_T\eta\rangle = \langle (\rho\circ f)\otimes_{T^\rho}\xi,(\rho\circ g)\otimes_{T^\rho}\eta\rangle;
    \end{equation*}
    which will also show that $U$ is an isometry.
    
    By definition and the construction of $T^\rho:$
    \begin{multline*}
    	\langle f\otimes_{T}\xi,g\otimes_T\eta\rangle 
    	= \int_H \Delta_H(t)^{-1/2}\Delta_G(t)^{1/2}\int_G \langle \xi,T_{f(s)^*g(st)}\eta\rangle \, dsdt  =\\
    	  	= \int_H \Delta_H(t)^{-1/2}\Delta_G(t)^{1/2}\int_G \langle \xi,T^\rho_{(\rho\circ f)(s)^*(\rho\circ g)(st)}\eta\rangle \, dsdt  = \langle (\rho\circ f)\otimes_{T^\rho}\xi,(\rho\circ g)\otimes_{T^\rho}\eta\rangle.
    \end{multline*}
   
   The set of sections $\Gamma$ defined above is dense in $C_c(\cC_H)$ in the inductive limit topology and the image of $U$ contains all the tensor $\{u\otimes_{T^\rho}\xi\colon u\in \Gamma,\xi\in X\},$ then $U$ has dense range and it must be a unitary operator.
   
   We claim that $\Ind_H^\cB(T)_b = U^* \Ind_H^\cC(T^\rho)_{\rho(b)}U$ for all $b\in \cB_H.$
   Indeed, for all $f\in C_c(\cB_H)$ and $\xi\in X$ we have
   \begin{multline*}
      U^* \Ind_H^\cC(T^\rho)_{\rho(b)}U(f\otimes_T\xi)
      =
      U^*((\rho(b)(\rho\circ f))\otimes_{T^\rho}\xi)
      =\\
      =      U^*((\rho\circ bf)\otimes_{T^\rho}\xi)
      = bf\otimes_T\xi
      = \Ind_H^\cB(T)_b(f\otimes_T\xi).
   \end{multline*}
   Consequently, for all  $b\in B_e$
   \begin{equation*}
   	\| \Ind_H^\cB(T)_b \|
   	= \| \Ind_H^\cC(T^\rho)_{\rho(b)} \|\leq \|\rho(b)\|,
   \end{equation*}
   and $\Ind_H^\cB(T)_b$ is dominated by $\rho.$
   
   Let $R\colon \cC\to \bB(X^p_T)$ be given by $R_b= U^* \Ind_H^\cC(T^\rho)_{c}U.$
   Then $R$ is a *-representation of $\cC$ such that $R\circ \rho=\Ind_H^\cB(T),$ so $R=\Ind_H^\cB(T)^\rho$ is unitary equivalent to $\Ind_H^\cC(T^\rho).$
\end{proof}
\subsection{Cross sectional bundles, C*-completions and induction}
We fix, for the rest to this section, a Fell bundle $\cB=\{B_t\}_{t\in G}$ and a closed normal subgroup $N$ of $G.$
We will briefly recall the main properties of the $L^1-$cross sectional bundle over $G/N$ derived from $\cB,$ as constructed in \cite[VIII 6]{FlDr88}.

We regard the quotient group $G/N=\{tN\colon t\in G\}$ as a LCH topological group with the quotient topology.
Given a coset $\alpha=tN$ ($t\in G$) let $\nu_\alpha$ be the regular Borel measure on $tN$ such that $\int_{\alpha} f(x)\, d\nu_\alpha(x)= \int_N f(tx)\, d_Nx$ for all $f\in C_c(tN).$
Here $d_Nx$ is the integration with respect to a (fixed) left invariant Haar Measure of $N.$
There is no ambiguity in the definition of $\nu_\alpha$ because the left invariance of $d_N$ implies the function $tN\to \mathbb{R},$ $r\mapsto \int_N f(rx)\, d_Nx,$ is constant.

For each $f\in C_c(G)$ we define $f^0\colon G\to \mathbb{C}$ as $f^0(t):=\int_Nf(tx)\, d_Nx=\int_{tN} f(x)d\nu_{tN}(x).$
It can be shown that $f^0$ is continuous and constant in the cosets, so it defines a function $f^{00}\in C(G/N)$ that vanishes outside the projection of $\supp(f)$ on $G/N.$
Then, by construction,
\begin{equation*}
 \int_{G/N}f^{00}(x)\, d_{G/N}x = \int_{G/N}d_{G/N}tN \int_N f(ts)\, d_Ns.
\end{equation*}

Throughout this work we assume the left invariant Haar measures of $G,\ N$ and $G/N$ are normalized in such a way that for all $f\in C_c(G)$
\begin{equation}\label{equ:normalization of Haar measures}
 \int_G f(t)\, d_Gt = \int_{G/N} d_{G/N} tN \int_{N} f(ts)\, d_Ns,
\end{equation}
exactly as in \cite[VIII 6.7]{FlDr88}.
In case $G$ is a product $H\times K,$ $N=H$ and $K=(H\times K)/H$; we meet this requirement by considering the product measure $d_G(r,s)=d_Hr \times d_Ks.$

By \cite[VIII 6.5]{FlDr88} there exists a unique continuous homomorphism $\Gamma\colon G\to (0,+\infty)$ such that
\begin{equation*}
 \int_N f(xyx^{-1})\, d_Ny = \Gamma(x) \int_N f(y)\, d_Ny,\ \forall \ x\in G,\ f\in C_c(N).
\end{equation*}
In case $G$ is a product group $H\times K$ and $N=H$ one has $\Gamma(r,s)=\Delta_H(r).$

The $L^1-$partial cross sectional bundle over $G/N$ derived from $\cB,$ $\cC=\{C_\alpha\}_{\alpha\in G/N},$ is determined by the following properties:
\begin{itemize}
 \item For every $\alpha\in  G/N,$ if $B_{\alpha}=\{B_t\}_{t\in \alpha}$ is the reduction of $\cB$ to $\alpha,$ then $C_\alpha$ is the completion of $C_c(\cB_\alpha)$ with respect to the norm $\|f\|_1=\int_\alpha \|f(t)\|\, d\nu_\alpha(t).$
 \item For every $r,s\in G,$ $f\in C_c(\cB_{rN})$ and $g\in C_c(\cB_{sN}),$ the product $f*g\in C_c(\cB_{rsN})\subset C_{rsN}$ and the involution $f^*\in C_c(\cB_{\rmu N})$ are determined by 
 \begin{align*}
  f*g(x)& = \int_{rN} f(y)g(y^{-1}x)\, d\nu_{rN}(y) & f^*(z)&=\Gamma(z)^{-1}f(z^{-1})^*,
 \end{align*}
 for all $x\in rsN$ and $z\in \rmu N.$
 \item Given $f\in C_c(\cB),$ if $f|\colon G/N \to \cC$ is given by the restriction $f|(\alpha):=f|_{\alpha},$ then $f|$ is a continuous cross section.
\end{itemize}

\begin{notation}
 The $L^1-$partial cross sectional bundle over $G/N$ derived from $\cB$ will be denoted $\quotient{L^1}{\cB}{N}=\{L^1(\cB_\alpha)\}_{\alpha\in G/N}.$
 This is more than just a notation because if given the coset $\alpha\in G/H$ we denote by $\cB_\alpha$ the reduction of $\cB$ to $\alpha$, then $\cB_\alpha$ is a Banach bundle and $L^1(\cB_\alpha)$ is (constructively and symbolically) the completion of $C_c(\cB_\alpha)$ with respect to $\|\ \|_1.$
 The usual $L^1-$cross sectional algebra of $\cB$ may be regarded (notationally and concretely) as $L^1[\cB/G]$ and the bundle $\cB$ itself as $L^1[\cB/\{e\}].$
\end{notation}

\begin{remark}
Recall that $\cB$ is a Fell bundle, so it has a strong approximate unit by \cite[VIII 16.3]{FlDr88}.
This is also the case for $\quotient{L^1}{\cB}{N}$ by~\cite[VIII 6.9]{FlDr88}. 
\end{remark}

\begin{remark}\label{rem:ilt dense set of L1 partial cross sectional bundle}
 The set $\{f|\colon f\in C_c(\cB)\}\subset C_c(\quotient{L^1}{\cB}{N})$ is dense in $C_c(\quotient{L^1}{\cB}{N})$ with respect to the inductive limit topology by \cite[II 14.6]{FlDr88} and, consequently, it is dense in $L^1( \quotient{L^1}{\cB}{N} ).$
\end{remark}

\begin{remark}\label{rem: iso of L1 algebras of L1 partial cross sectional bundle}
It is shown in \cite[VIII 6.7]{FlDr88} that there exists a unique isometric isomorphism of Banach *-algebras $\Phi\colon L^1(\cB)\to L^1(\quotient{L^1}{\cB}{N})$ such that $\Phi(f)=f|,$ for all $f\in C_c(\cB).$
\end{remark}

Given a *-representation $T\colon \cB\to \bB(Y_A)$ we can follow \cite[VIII 15.9]{FlDr88} and construct a  *-representation 
\begin{equation}\label{equ:L1[T/N]}
 \quotient{L^1}{T}{N}\colon \quotient{L^1}{\cB}{N}\to \bB(Y_A)
\end{equation}
such that for every coset $\alpha\in G/N,$ $f\in C_c(\cB_\alpha)$ and $\xi\in Y_A,$
\begin{equation*}
 \quotient{L^1}{T}{N}_f\xi = \int_G T_{f(t)}\xi\, d\nu_\alpha(t).
\end{equation*}
Note that $T$ and $\quotient{L^1}{T}{N}$ determine each other because
\begin{equation}\label{equ:integrated form and L1 form}
\quotient{\intform{L^1}}{T}{N}\circ \Phi = \intform{T}.
\end{equation}

In case we are given a non degenerate *-representation $S\colon \quotient{L^1}{\cB}{N}\to \bB(Y),$ the composition $\intform{S}\circ \Phi\colon L^1(\cB)\to \bB(Y_A)$ is a non degenerate *-representation that can be disintegrated to a non degenerate *-representation $T\colon \cB\to \bB(Y_A).$
We have $\quotient{\intform{L^1}}{T}{N}\circ \Phi = \intform{T} = \intform{S}\circ \Phi$ and it follows that  $\quotient{L^1}{T}{N}=S.$

\subsection{Weak containment for cross sectional bundles}
Any subgroup of $G/N$ can be expressed as $H/N$ for a unique subgroup $H$ of $G$ containing $N.$
We fix one such $H$ and make the identification 
\begin{equation*}
 \quotient{L^1}{\cB_H}{N}\equiv \{L^1(\cB_\alpha)\}_{\alpha\in H/N} \equiv \quotient{L^1}{\cB}{N}_{H/N};
\end{equation*}
which yields an isometric isomorphism of Banach *-algebras
\begin{equation}\label{equ:iso L1BHN and L1BH}
 \Phi^H\colon L^1(\cB_H)\to L^1(\quotient{L^1}{\cB_H}{N})\equiv L^1(\quotient{L^1}{\cB}{N}_{H/N}).
\end{equation}
For $H=G$ this is just the isomorphism $L^1(\cB)\cong L^1(\quotient{L^1}{\cB}{N})$ we have considered before.
Besides, we may think of $\Phi^H$ as the restriction of a C*-isomorphism 
\begin{equation}\label{equ:C*B_H as C*something}
 C^*(\cB_H)\cong C^*(\quotient{L^1}{\cB}{N}_{H/N})
\end{equation}
between the corresponding enveloping C*-algebras.

The equivalence of the representations theories of $\cB$ and $\quotient{L^1}{\cB}{N}$ now reduces to a correspondence $T\leftrightsquigarrow \quotient{L^1}{T}{N}$ between non degenerate *-representations of $\cB_H$ and of $\quotient{L^1}{\cB_H}{N}\equiv \quotient{L^1}{\cB}{N}_{H/N}.$

\begin{remark}\label{rem:every rep of L1BN is positive}
 Given a *-representation $T\colon \cB_H\to \bB(Y),$ the Proposition in \cite[XI 12.7]{FlDr88} implies $\quotient{L^1}{T}{N}$ is $\quotient{L^1}{\cB}{N}-$positive if and only if $T$ is $\cB-$positive.
Thus every *-representation of $\quotient{L^1}{\cB}{N}_{H/N}$ is $\quotient{L^1}{\cB}{N}-$positive by Theorem~\ref{thm:characterization of positive representations}.
\end{remark}

There is still another thing we want to extract from \cite[XI 12.7]{FlDr88}: equation (6) from page 1164.
In our notation it becomes
\begin{equation}\label{equ:restrictions, integrations and integrated forms}
 \quotient{\intform{L^1}}{T}{N}_{p^{G/N}_{H/N}(\Phi(f)^**\Phi(g))}=\quotient{\intform{L^1}}{T}{N}_{p^{G/N}_{H/N}(\Phi(f^**g))} = \intform{T}_{p^G_H(f^**g)};
\end{equation}
Here the $p$ functions are the generalized restrictions for the groups indicated in the super and sub indexes.

Identity \eqref{equ:restrictions, integrations and integrated forms} is used in \cite[XI 12.7]{FlDr88} to establish a unitary equivalence
\begin{equation}\label{equ:L1[T/N] and induction}
 \quotient{L^1}{\Ind_{H}^\cB(T)}{N} \cong \Ind_{H/N}^{\quotient{L^1}{\cB}{N}}(\quotient{L^1}{T}{N}).
\end{equation}

\begin{proposition}\label{prop:iso of H corss sectional algebra and cross sectional bundle}
 There exists a unique morphism of C*-algebras $\pi_H$ such that
 \begin{equation*}
 \xymatrix{L^1(\cB)\ar[rr]^{\Phi}\ar[rrd]_{\iota_H}& &L^1( \quotient{L^1}{\cB}{N} ) \ar[rr]^{\iota_{H/N}} & & C^*_{H/N}(\quotient{L^1}{\cB}{N})\\
& & C^*_H(\cB) \ar[rru]_{\pi_H} & & }
 \end{equation*}
 commutes; where $\Phi$ is the map from Remark~\ref{rem: iso of L1 algebras of L1 partial cross sectional bundle} and the $\iota$ maps are those of Definition~\ref{defi:of H cross sectional algebra}.
 Moreover, $\pi_H$ is a C*-isomorphism.
 \begin{proof}
  Equation \eqref{equ:L1[T/N] and induction} and the definition of the norm used to construct the cross sectional C*-algebras imply that for all $f\in L^1(\cB)$ we have $\|f\|^\cB_H = \|\Phi(f)\|^{\quotient{L^1}{\cB}{N}}_{H/N}.$
  Then the proof follows directly from Definition~\ref{defi:of H cross sectional algebra}.
 \end{proof}
\end{proposition}

The next proposition is the Corollary in \cite[XI 12.9]{FlDr88} stated for general Fell bundles, and not just for saturated ones.

\begin{proposition}\label{prop:universal completion of cross sectional bundle}
 The bundle C*-completion of $\quotient{L^1}{\cB}{N}=\{L^1(\cB_\alpha)\}_{\alpha\in G/N}$ is the C*-completion of $\quotient{L^1}{\cB}{N}$ with respect to the universal C*-completion $L^1(\cB_N)\to C^*(\cB_N)$ of the unit fibre of $\quotient{L^1}{\cB}{N}.$
\begin{proof}
 Let $\cC=\{C_\alpha\}_{\alpha\in G/N}$ be the bundle C*-completion of $\quotient{L^1}{\cB}{N}$ and 
 \begin{equation*}
  \rho=\{\rho_\alpha\}_{\alpha\in G/N}\colon \quotient{L^1}{\cB}{N}\to \cC
 \end{equation*}
 the canonical morphism.
 We clearly have $\|\rho_H(f)\|\leq \|f\|_{C^*(\cB_N)}$ for all $f\in L^1(\cB_N).$
 
 Take a faithful and non degenerate *-representation $T\colon C^*(\cB_N)\to \bB(Y).$
 The restriction $R:=T|_{L^1(\cB_N)}$ is $\quotient{L^1}{\cB}{N}-$positive by Remark~\ref{rem:every rep of L1BN is positive}.
 By \cite[XI 11.3]{FlDr88} we know that for all $f\in L^1(\cB_N),$ $\|R_f\|\leq \|\Ind_{\{H\}}^{\quotient{L^1}{\cB}{N}}(R)_f\|\leq \|f\|_{C^*(\cB_N)}=\|R_f\|.$
 Then there exists a C*-completion of $\quotient{L^1}{\cB}{N}$ with respect to $L^1(\cB_N)\to C^*(\cB_N).$ The rest of the proof follows from the universal property of $(\cC,\rho)$ of Remark~\ref{rem:universal property of bundle C*-completion} and Corollary~\ref{cor:uniqueness of completion}.
\end{proof}
\end{proposition}

\begin{notation}
 In the situation of the Proposition above, the bundle C*-completion of $\quotient{L^1}{\cB}{N}$ will be denoted $\quotient{C^*}{\cB}{N}=\{C^*(\cB_\alpha)\}_{\alpha\in G/N}.$
 This makes sense because the unit fibre of $\quotient{C^*}{\cB}{N}$ is, both symbolically and concretely, $C^*(\cB_N).$
\end{notation}

With the notation above, Propositions~\ref{prop:universal completion of cross sectional bundle} and~\ref{prop:iso of H corss sectional algebra and cross sectional bundle} and Remark~\ref{rem:H cross sectional algebras of bundle an completion} imply we have a C*-isomorphism
\begin{equation*}
 \psi_H\colon C^*_H(\cB)\to C^*_{H/N}( \quotient{C^*}{\cB}{N} )
\end{equation*}
that extends $\Phi\colon L^1(\cB)\to L^1(\quotient{L^1}{\cB}{N})$ (for every $H \sbgp G$ containing $N$).
In particular, for $H=N,$
\begin{equation*}
 \psi_N\colon C^*_N(\cB)\to C^*_\red( \quotient{C^*}{\cB}{N} ).
\end{equation*}

The identity above should not surprise anybody who is familiar with Chapter XII of \cite{FlDr88}.
In fact the proof (for saturated bundles) can be inferred out of \cite[pp 1264]{FlDr88}.

\begin{corollary}\label{cor:subgroups containing N}
 Given subgroups $H$ and $K$ such that $N \sbgp H \sbgp K \sbgp G,$ the diagram
 \begin{equation}\label{equ:q and psi maps for cross sectional bundles}
  \xymatrix{
  C^*_K(\cB)\ar[rr]^{q^\cB_{KH}} \ar[d]_{\psi_K} & & C^*_H(\cB)\ar[d]^{\psi_H}\\
  C^*_{K/N}(\quotient{C^*}{\cB}{N})\ar[rr]_{q^{\quotient{C^*}{\cB}{N}}_{(K/N) (H/N)}}& & C^*_{H/N}(\quotient{C^*}{\cB}{N})}
 \end{equation}
 commutes.
 Thus $\cB$ has the $HK$-WCP if and only if $\quotient{C^*}{\cB}{N}$ has the $(H/N)(K/N)-$WCP.
 In case both $H$ and $K$ are normal in $G,$ the preceding claims are equivalent to say $\cB_K$ has the $H-$WCP.
 In particular, for $H=N$ and $K=G$ this gives the equivalence of the following claims:
 \begin{enumerate}
  \item $\cB$ has the $N-$WCP.
  \item $\quotient{C^*}{\cB}{N}$ has WCP.
 \end{enumerate}
 \begin{proof}
  The commutativity of the diagram can be proved by computing the arrows in the dense set $L^1(\cB).$
  The rest is straightforward from the comments preceding the statement and Corollaries~\ref{cor:isomorphisms of q maps} and~\ref{cor:qBKH iso iff qBKH iso}.
 \end{proof}
\end{corollary}

\begin{corollary}\label{cor:qBH iso and amenability}
 If $H \sbgp N$ is normal in $G,$ then $\cB$ has the $H-$WCP if and only if $\quotient{C^*}{\cB}{N}$ has the WCP and $\cB_N$ has the $H-$WCP.
 In particular, for $H=\{e\}$ this says $\cB$ has the WCP if and only if both $\quotient{C^*}{\cB}{N}$ and $\cB_N$ have the WCP.
 \begin{proof}
  By Remark~\ref{rmk:composition of q maps},  $q^\cB_H=q^\cB_{NH}\circ q^\cB_H.$
  Hence, $q^\cB_H$ is a C*-isomorphism if and only if both $q^\cB_{NH}$ and $q^\cB_N$ are C*-isomorphism.
  By Corollary~\ref{cor:subgroups containing N}, $q^\cB_{N}$ is a C*-isomorphism if and only if $\quotient{C^*}{\cB}{N}$ has the WCP and Corollaries~\ref{cor:qBKH iso iff qBKH iso} and~\ref{cor:isomorphisms of q maps} imply $q^{\cB}_{NH}$ is a C*-isomorphism if and only if $\cB_N$ has the WCP.
  \end{proof}
\end{corollary}

\begin{corollary}\label{cor:G/N inner amenable and otimesmax=otimesmin}
 If $G/N$ is inner amenable and the canonical map
 \begin{equation*}
  C^*_\red(G/N)\otimes_{\max}C^*_N(\cB)\to C^*_\red(G/N)\otimes C^*_N(\cB)
 \end{equation*}
 is faithful, then $\cB$ has the $N-$WCP.
 \begin{proof}
  By Corollary~\ref{cor:subgroups containing N}, it suffices to show that the left regular representation of $C^*(\quotient{C^*}{\cB}{N})$ is faithful (i.e. $\quotient{C^*}{\cB}{N}$ has the WCP).
  Corollary~\ref{cor:inner amenability and nuclearity} gives the desired result when applied to the bundle $\quotient{C^*}{\cB}{N}.$
 \end{proof}
\end{corollary}

\begin{corollary}\label{cor:G/N amenable implies NWCP}
If $G/N$ is amenable, then every Fell bundle over $G$ has the $N-$WCP.
\end{corollary}

Every subgroup $H \sbgp N$ gives a C*-completion $\iota_H\colon L^1(\cB_N)\to C^*_H(\cB_N),$ $f\mapsto q^{\cB_N}_H(f),$ of the unit fibre of $\quotient{L^1}{\cB}{N}.$ 
If $H$ is normal in $G$ this completion gives rise to a C*-completion with particularly nice properties, as we show below.

\begin{theorem}\label{thm:HWCP equivalent conditions H normal included in N}
 Let $\cB=\{B_t\}_{t\in G}$ be a Fell bundle and consider normal subgroups of $G,$ $H \sbgp  N \sbgp G.$
 Then the C*-completion $\iota_H\colon L^1(\cB_N)\to C^*_H(\cB_N)$ of the unit fibre of $\quotient{L^1}{\cB}{N}$ satisfies the equivalent conditions of Theorem~\ref{thm:equivalent conditions for the existence of completions}.
 For the $\iota_H-$completion of $\quotient{L^1}{\cB}{N},$ $(\quotient{C^*_H}{\cB}{N},\kappa),$ there exists a unique *-homomorphism $\pi\colon C^*_H(\cB)\to C^*_\red(\quotient{C^*_H}{\cB}{N}) $ such that, for all $f\in C_c(\cB),$ $\pi(f)= \kappa\circ (\Phi(f));$  with $\Phi$ being that of Remark~\ref{rem: iso of L1 algebras of L1 partial cross sectional bundle}.
 Moreover, $\pi$ is a C*-isomorphism and the following are equivalent:
 \begin{enumerate}
  \item\label{item:B has the HWCP} $\cB$ has the $H-$WCP.
  \item\label{item:CBN has the WCP and BN HWCP} $\quotient{C^*}{\cB}{N}$ has the WCP and $\cB_N$ has the $H-$WCP.
  \item\label{item:CHBN has the WCP and BN HWCP} $\quotient{C^*_H}{\cB}{N}$ has the WCP and $\cB_N$ has the $H-$WCP.
  \item\label{item:CBN has the WCP and iso} $\quotient{C^*}{\cB}{N}$ has the WCP and the morphism  $\rho\colon \quotient{C^*}{\cB}{N}\to \quotient{C^*_H}{\cB}{N}$ provided by Corollary~\ref{cor:uniqueness of completion}, is an isomorphism.
 \end{enumerate}
 \begin{proof}  
  Let $T\colon \cB_H\to \bB(Y)$ be a non degenerate *-representation with faithful integrated form.
  Then the integrated form of $R:=\Ind_H^{\cB_N}(T)\colon \cB_N\to \bB(Z)$ factors via $q^{\cB_N}_H$ through a faithful *-representation $\intform{R}^q\colon C^*_H(\cB_N)\to \bB(Z).$
  It will be convenient to make a notational distinction between the integrated form of $\intform{R}$ and its restriction to $L^1(\cB_N),$ we denote $\intform{R}|$ the latter.
    
  By the induction in stages and \eqref{equ:L1[T/N] and induction} (for the subgroup $\{N\}\subset G/N$) we have the unitary equivalences
  \begin{equation}\label{equ:l1 induced}
    \quotient{L^1}{\Ind_H^\cB(T)}{N}\cong \quotient{L^1}{\Ind_N^\cB(R)}{N} \cong \Ind_{\{N\}}^{\quotient{L^1}{\cB}{N}}(\intform{R}|). 
  \end{equation}
  
  By construction $\Ind_H^\cB(T) = \regrep{H\cB}\otimes_{\intform{T}} 1.$
  Then the restriction of $\Ind_{\{N\}}^{\quotient{L^1}{\cB}{N}}(\intform{R}|)$ to the unit fibre $L^1(\cB_N)$ of $\quotient{L^1}{\cB}{N}$ is the integrated form of $ (\regrep{H\cB}|_{\cB_N}) \otimes_{\intform{T}} 1. $
  Theorem~\ref{thm:inclusion CHBK in BCHB} implies that for all $f\in L^1(\cB_N)$
  \begin{equation*}
   \| \Ind_{\{N\}}^{\quotient{L^1}{\cB}{N}}(\intform{R}|)_f \| = \| \iota_H(f)  \| =\| \intform{R}|_f \|;
  \end{equation*}
  which shows that $\iota_H$ satisfies the equivalent conditions of Theorem~\ref{thm:equivalent conditions for the existence of completions}

  The map $\pi_0\colon C_c(\cB)\to C_c(\quotient{C^*_H}{\cB}{N}),\ f\mapsto \kappa\circ (\Phi(f)),$ is a *-homomorphism of $*-$algebras, which we want to extend to construct the map $\pi$ of the thesis.
  
  In the notation of Proposition  \eqref{prop:induciton and completions}, $(\intform{R}|)^\kappa =\intform{R}^q$ and we have a unitary equivalence
  \begin{equation*}
    \Ind_{\{N\}}^{\quotient{L^1}{\cB}{N}}(\intform{R}|) \approx \Ind_{\{N\}}^{\quotient{C^*_H}{\cB}{N}}(\intform{R}^q)\circ\kappa.
  \end{equation*}
  Combining this with  \eqref{equ:integrated form and L1 form} and the fact that $\intform{R}^q$ is faithful we get that for all $f\in C_c(\cB),$
  \begin{multline*}
     \| f \|_H^\cB 
      = \| \intform{\Ind}_H^\cB(T)_f \|
      = \| \quotient{\intform{L^1}}{\Ind_N^\cB(R)}{N}_{\Phi(f)} \|
      =  \| \Ind_{\{N\}}^{\quotient{L^1}{\cB}{N}}(\intform{R})_{\Phi(f)} \|
      =\\
      =\|\Ind_{\{N\}}^{\quotient{C^*_H}{\cB}{N}}(S)_{\kappa\circ (\Phi(f))} \|
      = \|\pi_0(f)\|_{\{N\}}^{\quotient{C^*_H}{\cB}{N}}.
  \end{multline*}
  Hence, $\pi_0$ has a unique continuous extension $\pi\colon C^*_H(\cB)\to C^*_\red(\quotient{C^*_H}{\cB}{N}).$
  In fact $\pi$ is isometric and has dense range because the range of $\pi_0$ is dense in $C_c(\quotient{C^*_H}{\cB}{N})$ with respect to the inductive limit topology. 
  This shows $\pi$ is a C*-isomorphism.

  Claims (\ref{item:B has the HWCP}) and (\ref{item:CBN has the WCP and BN HWCP}) are equivalent by Corollary~\ref{cor:qBH iso and amenability}.
  By Corollary~\ref{cor:uniqueness of completion}, the $\rho$ of (\ref{item:CBN has the WCP and iso}) is an isomorphism if and only if   $\rho|_{\quotient{C^*}{\cB}{N}_e}\equiv q^{\cB_N}_H\colon C^*(\cB_N)\to C^*_H(\cB_N)$ is a C*-isomorphism; in which case $\quotient{C^*_H}{\cB}{N}$ has the WCP if and only if $\quotient{C^*}{\cB}{N}$ has the WCP.
  This gives the equivalence between (\ref{item:CBN has the WCP and BN HWCP}), (\ref{item:CHBN has the WCP and BN HWCP}) and (\ref{item:CBN has the WCP and iso}).
 \end{proof}
\end{theorem}

For the trivial group $H=\{e\}$ the Theorem above produces a C*-completion $\quotient{C^*_\red}{\cB}{N}$ of $\quotient{L^1}{\cB}{N}$ with respect to the regular representation $L^1(\cB_N)\to C^*_\red(\cB_N).$
Then we obtain the following immediate consequence.

\begin{corollary}
 Given a Fell bundle $\cB=\{B_t\}_{t\in G}$ and a normal subgroup $N \sbgp G,$ the following are equivalent:
 \begin{enumerate}
  \item $\cB$ has the WCP.
  \item Both $\quotient{C^*}{\cB}{N}$ and $\cB_N$ have the WCP.
  \item Both $\quotient{C^*_\red}{\cB}{N}$ and $\cB_N$ have the WCP.
  \item $\quotient{C^*}{\cB}{N}$ has the WCP and the C*-completions $(\quotient{C^*}{\cB}{N},\iota_N)$ and $(\quotient{C^*_\red}{\cB}{N},\iota_e)$ are isomorphic as C*-completions of $\quotient{L^1}{\cB}{N}.$
 \end{enumerate}
\end{corollary}

Our last result is similar to Theorem~\ref{thm:contiditional expectation open subgroup}, but it is not exactly  the same (consider the hypotheses in the case $H=N$).

\begin{corollary}
	Given a Fell bundle $\cB=\{B_t\}_{t\in G},$ an open subgroup $H \sbgp G$ and a normal subgroup $N \sbgp G$ such that $N \sbgp H \sbgp G,$ the inclusion of *-algebras $C_c(\cB_H)\subset C_c(\cB)$ can be extended to an inclusion of C*-algebras $C^*_N(\cB_H)\subset C^*_N(\cB).$
	Moreover, there is a unique conditional expectation $P_N\colon C^*_N(\cB)\to C^*_N(\cB_H)$ extending the restriction map $C_c(\cB)\to C_c(\cB_H),$ $f\mapsto f|_H.$	
\end{corollary}
\begin{proof}
 Let $K:=H/N\subset G/N$ be the projection of $H.$
 Then $K$ is an open subgroup of $G/N$ and, by Theorem~\ref{thm:contiditional expectation open subgroup}, there is a unique conditional expectation
 \begin{equation*}
P\colon C^*_\red(\quotient{C^*}{\cB}{N})\to C^*_\red( \quotient{C^*}{\cB}{N}_{K} )
 \end{equation*}
 extending the restriction map $p\colon C_c(\quotient{C^*}{\cB}{N})\to C_c(\quotient{C^*}{\cB}{N}_K).$
 
 Theorem~\ref{thm:HWCP equivalent conditions H normal included in N} gives a C*-isomorphism $\pi\colon C^*_N(\cB)\to C^*_\red(\quotient{C^*}{\cB}{N})\equiv C^*_\red(\quotient{C^*_N}{\cB}{N})$
 and we want to identify $C^*_N(\cB_H)$ with $C^*_\red( \quotient{C^*}{\cB}{N}_{K} )$ using $\pi.$
 On one hand, we may think of $\quotient{C^*}{\cB}{N}_K= \{C^*(\cB_\alpha)  \}_{\alpha \in K}\equiv \{C^*(\cB_\alpha)  \}_{\alpha \in H/N}$ as the completion of $\quotient{L^1}{\cB}{N}_K=\{L^1(\cB_\alpha)\}_{\alpha\in H/N}$ with respect to the  (universal) completion $\regrep{\cB_N}\colon L^1(\cB_N)\to C^*(\cB_N)$ of its unit fibre.
 On the other hand, $\quotient{L^1}{\cB}{N}_K=\{L^1((\cB_H)_\alpha)\}_{\alpha\in H/N}$ may be seen as $\quotient{L^1}{\cB_H}{N}$ (considering $N$ as a normal subgroup of $H$).
 Thus $\quotient{C^*}{\cB}{N}_K$ is the bundle C*-completion of $\quotient{L^1}{\cB_H}{N},$ $\quotient{C^*}{\cB_H}{N}.$
 Theorem~\ref{thm:HWCP equivalent conditions H normal included in N} now gives a C*-isomorphism $\pi_H\colon C^*_N(\cB_H)\to C^*_\red(\quotient{C^*}{\cB}{N})$ that we use to construct the injective morphism of C*-algebras $\iota \colon C^*_N(\cB_H)\to C^*_N(\cB), f\mapsto \pi^{-1}(\pi_H(f)).$ 
 
 We leave to the reader the verification of the fact that $\iota$ is the unique *-homomorphism extending the natural inclusion $C_c(\cB_H)\subset C_c(\cB).$ 
 The reader can also check that the natural candidate for $P_N,$ the map $f\mapsto \pi_H^{-1}(P(\pi(f))),$ does indeed satisfy the desired conditions.
\end{proof}

We close this article leaving a problem we would like to solve in the future.
Consider a group $G$ and an open subgroup $H \sbgp G$.
Is it true that every Fell bundle $\cB$ over $G$ with $C^*_H(\cB)$ nuclear has the $H-$WCP? 
This is the case if $H$ is normal (Corollary~\ref{cor:G/N inner amenable and otimesmax=otimesmin}). 
If, in addition, $H$ is inner amenable, then Proposition~\ref{prop:inner amenable subgroup and downward WCP} implies $\cB$ has the WCP (which is stronger than the $H-$WCP).

\bibliography{/home/damian/Nextcloud/Bibliografia/FerraroBiblio}
\bibliographystyle{plain}
\end{document}